\newtheorem{thm}{Theorem}[section]
\newtheorem{lem}[thm]{Lemma}
\theoremstyle{definition}
\def\M{\mathcal{M}_{g}}
\def\H{\mathcal{H}_{g}}
\def\Sg{\Sigma_{g}}
\begin{document}

\title[mapping class groups]
{On generating mapping class groups by pseudo-Anosov elements}

\author[S. Hirose]{Susumu Hirose}
\address{Department of Mathematics, Faculty of Science and Technology, Tokyo University of Science, Noda, Chiba 278-8510, Japan}
\email{hirose-susumu@rs.tus.ac.jp}

\author[N. Monden]{Naoyuki Monden}
\address{Department of Mathematics, Faculty of Science, Okayama University, Okayama 700-8530, Japan}
\email{n-monden@okayama-u.ac.jp}

\begin{abstract}
Wajnryb proved that the mapping class group of a closed oriented surface is generated by two elements. 
We prove that the mapping class group is generated by two pseudo-Anosov elements. 
In particular, if the genus is greater than or equal to nine, we can take the generators to two conjugate pseudo-Anosov elements with arbitrarily large dilatations. 
Another result we prove is that the mapping class group is generated by two conjugate reducible but not periodic elements if the genus is greater than or equal to eight. 
We also give similar results to the first and third results for the hyperelliptic mapping class group when the genus is greater than or equal to one. 
\end{abstract}

\maketitle

\section{Introduction}
Let $\Sg$ be a closed oriented surface of genus $g$, and let $\M$ be the mapping class group of $\Sg$, i.e., the group of isotopy classes of orientation-preserving homeomorphisms of $\Sg$. 
The hyperelliptic mapping class group $\H$ is the subgroup of $\M$ consisting of isotopy classes of orientation preserving homeomorphisms on $\Sg$ that commute with some fixed hyperelliptic involution $\iota:\Sg \to \Sg$. 
We note that $\M = \H$ for $g=0,1,2$ and in particular that $\mathcal{M}_0 = \mathcal{H}_0 = \{1\}$.

The problem of finding generating sets for groups is a classical 
one and has been studied by many authors. 
The first finite generating set for $\M$ was given by Dehn \cite{De}, and the generating set consists of $2g(g-1)$ Dehn twists. 
After that, Lickorish \cite{Li} showed that $3g-1$ Dehn twists generate $\M$. 
Moreover, Humphries \cite{Hu} proved that $2g+1$ Dehn twists suffice to generate $\M$ and that $2g$ (or less) Dehn twists cannot generate $\M$. 
It is well-known that $\H$ is generated by $2g+1$ Dehn twists (see \cite{BH}). 
If we do not require the generators of $\M$ to be Dehn twists, then the number of generators can be reduced. 
In fact, Lu \cite{Lu} showed that $\M$ is generated by three elements. 
Here, since $\M$ is not cyclic, at least two elements are needed to generate $\M$. 
For this, Wajnryb \cite{Wa} gave a generating set for $\M$ consisting of two elements for $g\geq 1$, that is, the generating set is minimal. 
Since then, various minimal generating sets for $\M$ have been constructed (see, for example, \cite{Ko1, Yi2, BK}). 
The problem of finding torsion (or involution) generating sets has also been studied, for example, for finite simple groups. 
There have been many results on torsion (or involution) generating sets for $\M$ (see, for example, \cite{Mac, MP, Luo, BF, Ka, Ko1, Mo3, Du0, La, Ko2, Yi1, Yi2}). 
In particular, Korkmaz gave a minimal torsion generating set of $\M$ consisting of two torsion elements of order $4g+2$ for $g\geq 3$ \cite{Ko1} (which was improved to order $g$ by Yildiz \cite{Yi2} for $g\geq 6$). 
A minimal generating set for $\H$ consisting of two torsion elements of order $4g+2$ and $2g+2$ was constructed by Korkmaz \cite{Ko1} for $g=1,2$ and Stukow \cite{St} for $g\geq 3$. 
We note that $\H$ can not be generated by involutions for all $g\geq 1$ from homological 
reasons 
by the work of Birman and Hilden \cite{BH}.

By Nielsen-Thurston classification, every element $f$ in $\M$ (resp. $\H$) has a representative homeomorphism $\phi$ which is of one of the following three types: (1) \textit{periodic} i.e., $\phi^m = \mathrm{id}$ for some integer $m>0$, (2) \textit{reducible} i.e., $\phi$ leaves invariant a finite collection of pairwise disjoint essential simple closed curves on $\Sigma_g$, or (3) \textit{pseudo-Anosov} \cite{FM, Th}. 
An element $f$ in $\M$ is said to be \textit{periodic} (resp. \textit{reducible} or \textit{pseudo-Anosov}) if a representative of $f$ is periodic (resp. reducible or pseudo-Anosov).   
Periodic elements in $\M$ are just torsion elements. 
Note that there is an element in $\M$ which is periodic and reducible as shown in Figure~\ref{rotation} in the next section. 
A simple example of a reducible but not periodic element is a Dehn twist, and a torsion element in $\M$ of order $4g+2$ is periodic but not reducible. 
On the other hand, if $f$ is a pseudo-Anosov 
element, then it is neither periodic nor reducible.

The minimal torsion generating set in \cite{Ko1} (resp. \cite{Yi2}) consists of two periodic but not reducible elements (resp. two periodic and reducible elements). 
Our results are motivated by these works. 
We give minimal generating sets for $\H$ and $\M$ consisting of two pseudo-Anosov elements (see Theorem~\ref{thm:1}) and a minimal generating set for $\M$ consisting of two reducible but not periodic elements (see Theorem~\ref{thm:4}). 
\begin{thm}\label{thm:1}
For $g \geq 1$, the hyperelliptic mapping class group $\H$ and the mapping class group $\M$ are generated by two pseudo-Anosov elements. 
\end{thm}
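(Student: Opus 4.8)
The plan is to reduce the whole statement to one uniform scheme, applied to $G=\M$ with $g\ge 1$ and to $G=\H$ with $g\ge 1$ --- the cases $\M$ with $g=1,2$ being covered automatically, since $\M=\H$ there. The only ingredient needed is a generating pair $\{t_c,w\}$ of $G$ in which $t_c$ is a Dehn twist about a nonseparating simple closed curve $c$. Such a pair is available from the known minimal generating sets: for $\M$ with $g\ge 3$ this is Korkmaz's theorem, which provides a Dehn twist together with a periodic element of order $4g+2$ \cite{Ko1}; for $\H$ with $g\ge 1$ one takes a Dehn twist about a chain curve together with the rotation cyclically permuting the chain, which is the pair underlying the torsion generating sets of \cite{Ko1,St}. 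Given such a $\{t_c,w\}$, the proof upgrades $t_c$ and $w$ to pseudo-Anosov elements in two moves that leave the generated subgroup unchanged.

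First I would make one generator pseudo-Anosov. The $w$-orbit $\{w^k(c):k\in\mathbf Z\}$ fills $\Sg$: if it did not, its filling subsurface would be a proper essential subsurface, whose boundary is a nonempty essential multicurve preserved both by $w$ (which permutes the orbit) and by $t_c$ (which is supported near the orbit curve $c$), hence preserved by $G=\langle t_c,w\rangle$ --- impossible, since $\M$ and $\H$ admit no invariant essential multicurve for $g\ge 1$. By Fathi's theorem, $\phi:=t_c^{\,n}w$ is then pseudo-Anosov for all but finitely many $n\in\mathbf Z$; fix such an $n$. As $w=t_c^{-n}\phi$, we still have $\langle\phi,t_c\rangle=\langle t_c,w\rangle=G$.

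Next I would fix the other generator. Here $\phi$ is pseudo-Anosov and $G=\langle\phi,t_c\rangle$ is not virtually cyclic (for $g\ge 1$ it contains a nonabelian free subgroup), so a standard perturbation argument applies: by the north--south dynamics of $\phi$ on the space $\mathcal{PMF}$ of projective measured foliations (see \cite{FM}), for all large $m$ in at least one of the directions $m\to\pm\infty$ the map $\phi^{\,m}t_c$ acquires an attracting and a distinct repelling fixed point on $\mathcal{PMF}$, both at filling foliations, and is hence pseudo-Anosov; failure in \emph{both} directions would mean $t_c$ interchanges the two foliations of $\phi$, forcing $\langle\phi,t_c\rangle$ into the virtually cyclic stabilizer of a Teichm\"uller geodesic, contrary to assumption. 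Thus $\psi:=\phi^{\,m}t_c$ is pseudo-Anosov for a suitable $m$; since $t_c=\phi^{-m}\psi$ we get $\langle\phi,\psi\rangle=\langle\phi,t_c\rangle=G$, with $\phi$ and $\psi$ both pseudo-Anosov, which proves the theorem. Running the same scheme while tracking the growth of the dilatation (taking $|n|,|m|$ large) and taking $\psi$ in the conjugacy class of $\phi$ is what I would push for the sharper assertions announced in the abstract.

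The two geometric inputs --- Fathi's theorem and the north--south perturbation --- I expect to be routine, their hypotheses being verified above via the ``no invariant multicurve'' and ``not virtually cyclic'' observations. The main obstacle is the \emph{input} to the scheme: establishing that $\M$ and $\H$ really do possess a two-element generating set one of whose members is a single Dehn twist, uniformly over all $g\ge 1$ and for the hyperelliptic group as well --- in particular the small genera, where $\M=\H$ and there is little maneuvering room, must be handled explicitly. For the finer statements one must in addition keep quantitative control of the dilatation and of the Nielsen--Thurston type without breaking generation, and that bookkeeping is presumably where the hypotheses $g\ge 9$ and $g\ge 8$ come from.
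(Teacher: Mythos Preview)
Your approach is essentially correct and genuinely different from the paper's. The paper proceeds constructively: it writes down explicit products of Dehn twists along the chain curves (the elements $h_1,h_2$ for $\H$ and $f_1,f_2$ for $\M$), checks via Penner's criterion (Theorem~\ref{Penner}) that these are pseudo-Anosov, and then verifies by a direct curve-by-curve computation that each Humphries generator lies in $\langle h_1,h_2\rangle$ (respectively $\langle f_1,f_2\rangle$). Your scheme instead takes a known $\{t_c,w\}$ generating pair and perturbs it in two dynamical steps --- Fathi's theorem to make one generator pseudo-Anosov, then north--south dynamics on $\mathcal{PMF}$ to make the other --- without ever touching the Dehn-twist verification.

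What each buys: your argument is shorter and conceptually uniform, and it explains \emph{why} such pairs should exist without any computation. The paper's explicit construction, on the other hand, is exactly what makes Theorems~\ref{thm:3} and~\ref{thm:4} tractable: the conjugate-generator and large-dilatation refinements are obtained by carefully choosing the explicit elements (e.g.\ $\rho_n=r\,t_{c_1}t_{b_1}^{-n}t_{a_1}$) and tracking a train-track incidence matrix, something your non-constructive perturbation cannot do. Your closing remark about ``pushing'' the scheme toward conjugate generators is too optimistic --- there is no mechanism in your argument forcing $\psi$ into the conjugacy class of $\phi$, and the paper's genus bounds $g\ge 8,9$ come from a lantern-relation computation (Lemma~\ref{lem:10}) that has no analogue in your framework.

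Two cautions on the details you labelled routine. First, the version of Fathi's theorem you need is the one for arbitrary $w$ under the hypothesis that the $\langle w\rangle$-orbit of $c$ fills; this \emph{is} in Fathi's 1987 paper, but the more commonly cited statement assumes $w$ pseudo-Anosov, so be explicit about which theorem you invoke. Second, Fathi's theorem is stated for $g\ge 2$; the torus case $g=1$ needs a separate (easy) trace computation in $SL_2(\mathbb{Z})$, which you should include.
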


We next focus on minimal normal generating sets for $\M$. 
Since any Dehn twists about nonseparating simple closed curves are conjugate, the above-mentioned generating sets given by Lickorish \cite{Li} and Humphries \cite{Hu} (resp. Birman-Hilden \cite{BH}) are normal generating sets of $\M$ (resp. $\H$). 
Moreover, the above-mentioned minimal torsion generating set in \cite{Ko1} (resp. \cite{Yi2}) consisting of two periodic but not reducible (resp. periodic and reducible) elements is also a normal one. 
With this background, Theorem~\ref{thm:3} (resp. \ref{thm:4}) below gives a minimal normal generating set for $\M$ consisting of two pseudo-Anosov (resp. reducible but not periodic) elements. 
In Theorem~\ref{thm:50} below, we give a minimal normal generating set for $\H$ consisting of two reducible but not periodic elements. 
However, we do not know whether $\H$ is generated by two conjugate periodic (resp. pseudo-Anosov) elements. 

Another motivation for Theorem~\ref{thm:3} comes from the work of Lanier and Margalit \cite{LM}. 
To explain this, we introduce some 
terminology for pseudo-Anosov elements. 
Let $g\geq 2$. 
When $f$ is a pseudo-Anosov element in $\M$, $f$ has a representative $\phi$ such that there are transverse measured foliations $(\mathcal{F}^s, \mu_s)$ and $(\mathcal{F}^u, \mu_u)$ on $\Sigma_g$, and a real number $\lambda>1$ so that $\phi \cdot (\mathcal{F}^s, \mu_s) = (\mathcal{F}^s, \lambda^{-1} \mu_s)$ and $\phi \cdot (\mathcal{F}^u, \mu_u) = (\mathcal{F}^u, \lambda \mu_u)$. 
Then, $\mathcal{F}^s$ and $\mathcal{F}^u$ are called the \textit{stable} and \textit{unstable foliations}, and $\lambda$ is called the \textit{dilatation} or \textit{stretch factor} of $\phi$. 
Note that two conjugate pseudo-Anosov elements have the same dilatation. 
With these terminologies, we can state Lanier and Margalit's result. 
They showed that $\M$ is normally generated by pseudo-Anosov elements with arbitrarily large dilatations for each $g\geq 1$ 
(see 
Theorem 1.3 and Proposition 4.2 in \cite{LM}). 
Theorem~\ref{thm:3} is a refinement of this result in the sense that two such conjugate pseudo-Anosov elements suffice to generate $\M$ if $g\geq 9$.

\begin{thm}\label{thm:3}
The mapping class group $\M$ is generated by two conjugate pseudo-Anosov elements with arbitrarily large dilatations if $g\geq 9$, and by three conjugate pseudo-Anosov elements with arbitrarily large dilatations if $g\geq 3$. 
\end{thm}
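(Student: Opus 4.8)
The plan is to construct, for every $R>0$, a pseudo-Anosov element $\phi=\phi_R\in\M$ with dilatation $\lambda(\phi)>R$ together with an element $w\in\M$ such that $\M=\langle\phi,\,w\phi w^{-1}\rangle$ when $g\geq 9$, and together with $w_1,w_2\in\M$ such that $\M=\langle\phi,\,w_1\phi w_1^{-1},\,w_2\phi w_2^{-1}\rangle$ when $3\leq g\leq 8$ (the three-generator case for $g\geq 9$ then being immediate). Because conjugate pseudo-Anosov elements share the same dilatation, the statement reduces to two tasks: (i) producing pseudo-Anosov elements of arbitrarily large, prescribed dilatation, and (ii) showing that two (resp.\ three) well-chosen conjugates of such an element generate $\M$.

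For (i) I would invoke Thurston's construction of pseudo-Anosov homeomorphisms (see \cite{Th, FM}): fix multicurves $A,B$ on $\Sg$ with $A\cup B$ filling $\Sg$; then $T_A^{\,n}T_B^{-1}$ is pseudo-Anosov for all sufficiently large $n$, with $\lambda(T_A^{\,n}T_B^{-1})\to\infty$ as $n\to\infty$. (Qualitatively this is the source of the Lanier--Margalit fact \cite{LM} that $\M$ has pseudo-Anosov normal generators of arbitrarily large dilatation; alternatively one can post-compose the pseudo-Anosov element produced in Theorem~\ref{thm:1} with a high power of a Dehn twist meeting its unstable foliation.) Thus the dilatation will be controlled by a single integer parameter $n$ in $\phi=\phi_n$.

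Task (ii) is the heart of the matter, and for it I would adapt the surface-subdivision technique behind Theorem~\ref{thm:1} and its conjugate refinement (Theorem~\ref{thm:4}). Cut $\Sg$ along a separating multicurve into two subsurfaces $\Sigma_L$, $\Sigma_R$ overlapping in a wide collar, and let $w$ be a ``half-turn'' homeomorphism exchanging $\Sigma_L$ and $\Sigma_R$. Choose $A,B$ equivariantly with respect to $w$ and so that the word for $\phi_n$ contains a prescribed collection of Dehn twists $T_{c_1},\dots,T_{c_k}$ supported in $\Sigma_L$, while $w\phi_n w^{-1}$ contains the mirror twists $T_{w(c_1)},\dots,T_{w(c_k)}$ in $\Sigma_R$. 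Using the chain relations, the lantern relation, and commutativity of disjoint twists, one extracts from $\langle\phi_n,\,w\phi_n w^{-1}\rangle$ first the individual twists $T_{c_i}$ and $T_{w(c_i)}$ and then a complete Humphries generating set \cite{Hu} of $\M$. The hypothesis $g\geq 9$ enters precisely here: only when $\Sigma_L$ and $\Sigma_R$ are large enough, with enough overlap, do the recovered twists already generate $\M$; for $3\leq g\leq 8$ one spends a third conjugate $w_2\phi_n w_2^{-1}$ on the leftover handles.

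I expect the genuine obstacle to be the simultaneous calibration demanded by (ii): one must choose $A$, $B$, $w$ (and $w_1,w_2$ in the three-generator range) so that (a) $A\cup B$ really fills $\Sg$, so that $\phi_n$ is pseudo-Anosov and not merely reducible --- this is where the extra handle compared with Theorem~\ref{thm:4} is spent; (b) the Dehn-twist bookkeeping genuinely recovers a generating set of $\M$; and (c) enlarging $n$ to inflate the dilatation disturbs neither (a) nor (b), which one secures by performing the twist insertions in a region of $\Sg$ disjoint from the curves $c_i$. Of these, (b) is the delicate point, and balancing it against (a) is what should pin down the thresholds $g\geq 9$ and $g\geq 3$.
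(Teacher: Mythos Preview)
Your plan has a structural gap at the heart of task~(ii). If you choose $A,B$ equivariantly with respect to the half-turn $w$, then $wT_Aw^{-1}=T_A$ and $wT_Bw^{-1}=T_B$, so $w\phi_nw^{-1}=\phi_n$ and the two ``conjugates'' coincide; nothing is gained. More broadly, the scheme of recovering individual Dehn twists by comparing $\phi_n$ with a conjugate only works when the conjugator $w$ is \emph{small} in a precise sense: one needs $\phi_n$ to act \emph{predictably} on the curves used to build $w$, so that the ratio $(w\phi_nw^{-1})\phi_n^{-1}$ is a short, explicit word in Dehn twists. A global half-turn symmetry has no such property relative to a filling pseudo-Anosov, and a pseudo-Anosov cannot be ``supported in $\Sigma_L$'' in any useful sense, so the extraction step as you describe it does not get off the ground.

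The paper's mechanism is quite different and worth internalising. One takes $\rho_n=r\,t_{c_1}t_{b_1}^{-n}t_{a_1}$, where $r$ is the order-$g$ rotation; Penner's criterion applied to $\rho_n^{\,g}$ shows $\rho_n$ is pseudo-Anosov with dilatation at least $(n+1)^{1/g}$. The crucial feature is that $\rho_n$ agrees with $r$ on every curve disjoint from $a_1,b_1,c_1$. Hence if $w$ is a short product of twists about such curves --- the paper takes $w=t_{b_3}t_{a_5}^{-1}t_{c_6}$ --- then $(w\rho_nw^{-1})\rho_n^{-1}=w\cdot(\rho_nw^{-1}\rho_n^{-1})=w\cdot(rw^{-1}r^{-1})$ is an explicit product of six Dehn twists, independent of $n$. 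From this seed element together with $\rho_n$ a lantern-relation argument (Lemmas~\ref{lem:10}--\ref{lem:8}) extracts a single Dehn twist; then $r$ and all of Lickorish's generators follow. The threshold $g\ge 9$ arises because $b_3,a_5,c_6$ and their $r$-images $b_4,a_6,c_7$ must avoid the first handle (where the $n$-dependence sits) \emph{and} the lantern manoeuvre of Lemma~\ref{lem:8} requires $g\ge k+6$ with $k=3$; for $g\ge 3$ one instead uses three conjugates with the simpler conjugators $t_{a_2}$ and $t_{a_3}t_{b_3}t_{c_2}$. Your intuition in~(c) --- localising the $n$-insertion so that enlarging $n$ does not disturb the bookkeeping --- is exactly right, but the localisation is to a single handle, not to half the surface, and the conjugator is a short twist word rather than a symmetry.
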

\begin{thm}\label{thm:4}
The mapping class group $\M$ is generated by two conjugate reducible but not periodic elements if $g\geq 8$, and by three conjugate reducible but not periodic elements if $g\geq 3$. 
\end{thm}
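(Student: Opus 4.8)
The plan is to prove Theorem~\ref{thm:4} in parallel with Theorem~\ref{thm:3}: start from a minimal generating set of $\M$ whose two elements are conjugate and periodic, and then perturb each generator by a product of commuting Dehn twists so that it becomes reducible but not periodic, while preserving both the conjugacy of the two elements and the generation property. Concretely, for $g$ large fix a periodic reducible mapping class $h$ --- say a rotation of order $n$ of $\Sg$ cyclically permuting a family of $n$ disjoint essential simple closed curves --- together with a mapping class $\varphi$ such that $\{h,\varphi h\varphi^{-1}\}$ generates $\M$; a generating pair of (a variant of) this shape is available from the constructions of Korkmaz~\cite{Ko1} and Yildiz~\cite{Yi2}. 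For the three-generator statement one instead starts from a generating triple $\{h,\varphi_1 h\varphi_1^{-1},\varphi_2 h\varphi_2^{-1}\}$ of three conjugates, which exists in smaller genus.

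Next, choose an $h$-invariant essential multicurve $\mathcal C$ on $\Sg$ and let $T_{\mathcal C}$ denote the product of the positive Dehn twists about the components of $\mathcal C$. Since the components are disjoint, these twists commute; since $h$ is orientation-preserving and $h(\mathcal C)=\mathcal C$, we have $h\,T_{\mathcal C}\,h^{-1}=T_{\mathcal C}$. Set $f:=T_{\mathcal C}\,h$ and replace the generating set by $\{f,\varphi f\varphi^{-1}\}$ (resp.\ by $\{f,\varphi_1 f\varphi_1^{-1},\varphi_2 f\varphi_2^{-1}\}$). Then $f$ preserves $\mathcal C$, so $f$ is reducible; and an easy induction gives $f^{\,n}=T_{\mathcal C}^{\,n}$, which has infinite order, so $f$ is not periodic. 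Moreover the perturbed generators are pairwise conjugate by construction.

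It remains to show $\langle f,\varphi f\varphi^{-1}\rangle=\M$ (and the analogue with three conjugates). From $f^{\,n}=T_{\mathcal C}^{\,n}$ and $\varphi f^{\,n}\varphi^{-1}=T_{\varphi(\mathcal C)}^{\,n}$ the subgroup contains $n$-th powers of the Dehn twists about all components of $\mathcal C\cup\varphi(\mathcal C)$, and it contains $f$ and $\varphi f\varphi^{-1}$ themselves. The aim is then to choose $h$, $\mathcal C$ and $\varphi$ so that these elements, together with suitable products and commutators of them, generate a subgroup containing a Humphries- or Lickorish-type configuration of Dehn twists, which forces the subgroup to be all of $\M$; the relations in $\M$ used here are the lantern, chain and braid relations.

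The main obstacle is precisely this last step: perturbing a generating set by Dehn twists does not automatically preserve generation, so the rotation $h$, the invariant multicurve $\mathcal C$ and the conjugating element(s) must be selected with care so that enough individual Dehn twists --- or the original generators --- can be reconstructed inside $\langle f,\varphi f\varphi^{-1}\rangle$. Making this reconstruction explicit is the technical heart of the argument, and it is where the hypothesis $g\ge 8$ is needed --- one less than the $g\ge 9$ of Theorem~\ref{thm:3}, reflecting that a reducible Dehn-twist perturbation of a periodic generating pair is more flexible than a pseudo-Anosov one --- while the three-generator statement for $g\ge 3$ follows from the additional room provided by the extra conjugate.
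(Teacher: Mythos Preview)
Your proposal is a plan, not a proof. You correctly identify the overall shape of the argument --- take a rotation, multiply by Dehn twists to make it reducible of infinite order, and find a conjugate (or two) so that the resulting collection still generates $\M$ --- and you correctly identify the hard step. But you then leave that step entirely undone: you write that ``making this reconstruction explicit is the technical heart of the argument'' and stop. Everything before that point is soft (the infinite-order computation $f^n=T_{\mathcal C}^{\,n}$ is routine); everything that actually needs proving lies in that last paragraph, and none of it is supplied. In particular you give no choice of $\mathcal C$ or $\varphi$, no mechanism for extracting a \emph{single} Dehn twist from the subgroup (as opposed to $n$-th powers of multitwists), and no explanation of where the bound $g\ge 8$ enters. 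Note also that the Korkmaz pair you cite consists of elements that are periodic but \emph{not} reducible, so they admit no invariant multicurve and your construction of $T_{\mathcal C}$ does not apply to them; only the Yildiz pair is usable, and it requires $g\ge 6$.

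The paper's argument is in the same spirit but is both more concrete and organized differently. Rather than perturbing an existing periodic generating pair by a full multitwist $T_{\mathcal C}$, the paper perturbs the rotation $r$ of order $g$ by a \emph{single} Dehn twist, setting $\rho=r\,t_{a_1}$ (and $\rho'=r\,t_{c_1}$ for the three-generator case). It then writes down an explicit conjugator --- $t_{b_2}t_{a_4}^{-1}t_{c_5}$ for two generators, and $t_{a_2}$ together with $t_{a_3}t_{b_3}t_{c_2}$ for three --- and proves generation directly, with no appeal to any prior periodic generating set. The key technical input is a pair of lemmas (Lemmas~\ref{lem:10}--\ref{lem:8}) that, from the element $\rho$ and a carefully chosen product of the form $t_{b_k}t_{a_{k+2}}^{-1}t_{c_{k+3}}\cdot t_{c_{k+4}}^{-1}t_{a_{k+3}}t_{b_{k+1}}^{-1}$, manufacture the differences $t_{a_k}t_{a_{k+1}}^{-1}$, $t_{b_k}t_{b_{k+1}}^{-1}$, $t_{c_k}t_{c_{k+1}}^{-1}$ and then, via a lantern relation, an honest Dehn twist $t_{a_{k+2}}$; once one Dehn twist is in hand, conjugation by $\rho$ and $r$ produces all Lickorish generators. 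The bound $g\ge 8$ (versus $g\ge 9$ in Theorem~\ref{thm:3}) arises not from any abstract ``flexibility'' principle but from index bookkeeping: since $\rho=r\,t_{a_1}$ disturbs only curves meeting $a_1$, the conjugator can use indices starting at $2$ rather than $3$, and Lemma~\ref{lem:8} then needs $g\ge k+6$ with $k=2$ instead of $k=3$.
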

\begin{thm}\label{thm:50}
The hyperelliptic mapping class group $\H$ is generated by two conjugate reducible but not periodic elements if $g\geq 1$. 
\end{thm}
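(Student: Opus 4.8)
The plan is to reduce to producing, for each $g\ge 1$, two mapping classes $f_1,f_2\in\H$ that are conjugate in $\H$, each reducible but not periodic, and which together generate $\H$. The genus one case is immediate: $\H=\mathcal M_1\cong SL(2,\mathbb Z)$ is generated by the two Dehn twists about a pair of simple closed curves meeting once; these twists are conjugate in $\mathcal M_1$, each fixes its defining curve and hence is reducible, and no nonzero power of a Dehn twist is trivial, so neither is periodic. So assume $g\ge 2$ and work through the Birman--Hilden description of $\H$ (see \cite{BH}): it is generated by the Dehn twists $t_1,\dots,t_{2g+1}$ about a chain $c_1,\dots,c_{2g+1}$ of simple closed curves, the hyperelliptic involution $\iota$ lies in $\langle t_1,\dots,t_{2g+1}\rangle$ via a chain relation, and (from the braid and disjointness relations among the $t_i$) the element $B:=t_1t_2\cdots t_{2g+1}\in\H$ satisfies $B t_i B^{-1}=t_{i+1}$ for $1\le i\le 2g$, while $B t_{2g+1}B^{-1}=t_{c_0}$ for a simple closed curve $c_0$ disjoint from $c_2,c_4,\dots,c_{2g}$.

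The two elements I would take are $f_1:=t_1t_3\cdots t_{2g+1}$, the multitwist about the $g+1$ pairwise disjoint odd--indexed chain curves, and $f_2:=Bf_1B^{-1}=t_2t_4\cdots t_{2g}\,t_{c_0}$, the multitwist about the $g+1$ pairwise disjoint curves $c_2,c_4,\dots,c_{2g},c_0$. Then $f_2$ is conjugate to $f_1$ by construction; each of $f_1,f_2$ fixes every curve in its support, so is reducible; and a nontrivial multitwist has infinite order, so neither is periodic. These three properties are routine; the content of the theorem is the claim that $\langle f_1,f_2\rangle=\H$.

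This last point is the main obstacle. Since $\H=\langle t_1,\dots,t_{2g+1}\rangle$, it suffices to show that each $t_i$ lies in $\langle f_1,f_2\rangle$. The plan is to ``unbundle'' the two multitwists by forming suitable words in $f_1$ and $f_2$ and using the standard relations in $\H$ — the braid and disjointness relations among the $t_i$, the chain relation $(t_1t_2\cdots t_{2g+1})^{2g+2}=1$, and the relation expressing $\iota$ as a chain word — to isolate one twist, say $t_1$, and then recover the rest as $t_{i+1}=B^{\,i}t_1B^{-i}$ once $B=t_1t_2\cdots t_{2g+1}$ has been seen to lie in the group. An alternative is to prove that $\langle f_1,f_2\rangle$ contains the commutator subgroup $[\H,\H]$ — which is generated by the elements $t_at_b^{-1}$ for nonseparating curves $a,b$, and these should appear as products of conjugates (inside $\langle f_1,f_2\rangle$) of $f_1$ and $f_2^{-1}$ — together with an element of $\langle f_1,f_2\rangle$ whose class generates the finite cyclic group $H_1(\H;\mathbb Z)$. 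I expect the word computations, and the control of the $H_1$--class (the class of a product of $g+1$ Dehn twists depends on the parity of $g$, which may force a small modification of $f_1$ in one parity class, for instance raising one factor to a suitable odd power), to be the delicate part; the very small genera $g=2,3$ will most likely have to be settled by explicit computation before a uniform argument takes over.
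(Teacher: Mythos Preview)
Your proposal has a genuine gap at the decisive step: you do not prove that $\langle f_1,f_2\rangle=\H$, and you yourself flag this as ``the main obstacle.'' The plan you sketch --- to ``unbundle'' the two multitwists via braid, disjointness, and chain relations until a single $t_i$ falls out, or alternatively to manufacture all $t_at_b^{-1}$ and then hit a generator of $H_1(\H)$ --- is not a standard technique, and there is no evident mechanism by which words in two positive multitwists about $g{+}1$ pairwise disjoint curves each should produce an individual Dehn twist. Your own abelianization remark already shows the construction is not robust: $f_1$ maps to $(g{+}1)$ times a generator of the cyclic group $H_1(\H)$, so whether $\langle f_1,f_2\rangle$ even surjects onto $H_1(\H)$ depends on the arithmetic of $g$, and you anticipate having to alter $f_1$ in one parity class. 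That alteration would break the symmetry you rely on and still leaves the generation argument entirely to be supplied. As written, this is a statement of intent, not a proof.

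The paper avoids the unbundling problem by a different choice of element. It takes $\mathcal R=(t_{\alpha_1}^2 t_{\alpha_2}\cdots t_{\alpha_{2g-1}})^{g-1}t_{\alpha_{2g+1}}^{-1}$ together with its conjugate $r_\pi\mathcal R r_\pi^{-1}$ by the $\pi$-rotation $r_\pi\in\H$. The key point is that $\mathcal R$ is a \emph{root of a Dehn twist}: the chain relation gives $\mathcal R^{2g-1}=t_{\alpha_{2g+1}}^{-1}$, so a single Dehn twist already lies in $\langle\mathcal R\rangle$, and likewise $(r_\pi\mathcal R r_\pi^{-1})^{2g-1}=t_{\alpha_1}^{-1}$. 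From these two twists and a further power of $\mathcal R$ one extracts the product $t_{\alpha_1}t_{\alpha_2}\cdots t_{\alpha_{2g-1}}$, which conjugates $t_{\alpha_1}$ successively to $t_{\alpha_2},\dots,t_{\alpha_{2g-1}}$; a short additional step with the conjugate generator then yields $t_{\alpha_{2g}}$. The idea you are missing is to pick a reducible, non-periodic element \emph{whose power is already a single Dehn twist}; generation then becomes a short computation rather than an open-ended search. (Your $g=1$ case is correct and matches the paper.)
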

It is well-known that the mapping class group $\M$ of $\Sigma_g$ can be identified with the orbifold fundamental group of the moduli space $\mathbb{M}_g$ of Riemann surfaces of genus $g$. 
When we equiv $\mathbb{M}_g$ with the Teichm\"{u}ller metric, pseudo-Anosov elements in $\M$ correspond to geodesic loops in $\mathbb{M}_g$. 
In particular, the logarithm of the dilatation of a pseudo-Anosov element in $\M$ is the length of a geodesic loop in $\mathbb{M}_g$. 
Then, we can rewrite Theorem~\ref{thm:3} in terms of the geometry of normal covers of $\mathbb{M}_g$. 
Theorem~\ref{thm:3} implies that we can take two (resp. three) geodesic loops in $\mathbb{M}_g$ with the same arbitrary large length that generate the orbifold fundamental group of $\mathbb{M}_g$ if $g\geq 9$ (resp. $g\geq 3$).

\vspace{0.1in}
\noindent \textit{Acknowledgements.} 
The first author was supported by JSPS KAKENHI Grant Numbers JP20K03618. 
The second author was supported by JSPS KAKENHI Grant Numbers JP20K03613. 
The authors would like to thank Eiko Kin and Mustafa Korkmaz for their comments on an earlier version of this paper.

\section{Preliminaries}\label{Pre}
In this section, we recall the basic facts about the mapping class group and present examples of pseudo-Anosov elements and reducible but not periodic elements. 
More details can be found in \cite{FM}. 
Let $\M$ be the mapping class group of the closed oriented surface $\Sigma_g$ of genus $g$. 
In this paper, we use the same symbol for a diffeomorphism and its isotopy class, or a simple closed curve and its isotopy class. 
For a simple closed curve $c$ on $\Sigma_g$, we denote by $t_c$ the right-handed Dehn twist about $c$. 
For $f_1$ and $f_2$ in $\M$, the notation $f_2f_1$ means that we first apply $f_1$ and then $f_2$.

Let $a,b,c,d$ be simple closed curves on $\Sg$. 
Throughout this paper, we will use the following four relations in $\M$ repeatedly. 
\begin{itemize}
\item For any $f \in \M$, we have $t_{f(a)} = f t_a f^{-1}$.

\item If $a$ is disjoint from $b$, then $t_a t_b = t_b t_a$, $t_b(a) = a$ and $t_a(b) = b$.

\item If $a$ intersects $b$ transversely at exactly one point, then $t_bt_a(b) = a$, and hence $t_a(b) = t_b^{-1}(a)$. 
This gives the \textit{braid relation} $t_at_bt_a = t_bt_at_b$.

\item Let $x,y,z$ be the interior curves on a subsurface of genus $0$ with four boundary curves $a,b,c,d$ on $\Sg$ as in Figure~\ref{lanterncurves}. 
Then, the \textit{lantern relation} $t_d t_c t_b t_a = t_x t_y t_z$ holds. 
The lantern relation was discovered by Dehn \cite{De} and rediscovered by Johnson \cite{Jo1}. 
\end{itemize}

We assume that the surface of this paper is the $yz$-plane and that $\Sigma_g$ is embedded in $\mathbb{R}^3$ as in Figure~\ref{rotation} (resp. Figure~\ref{humphries1}) such that it is invariant under the rotation $r$ by $-\frac{2\pi}{g}$ about the $x$-axis (resp.  the rotation $\iota$ by $\pi$ about the $y$-axis, i.e., the hyperelliptic involution, and the rotation $r_\pi$ by $\pi$ about the $x$-axis). 
Note that $r_\pi$ is an element in $\H$ since $\iota r_\pi = r_\pi \iota$. 
\begin{figure}[hbt]
  \centering
       \includegraphics[height=3cm]{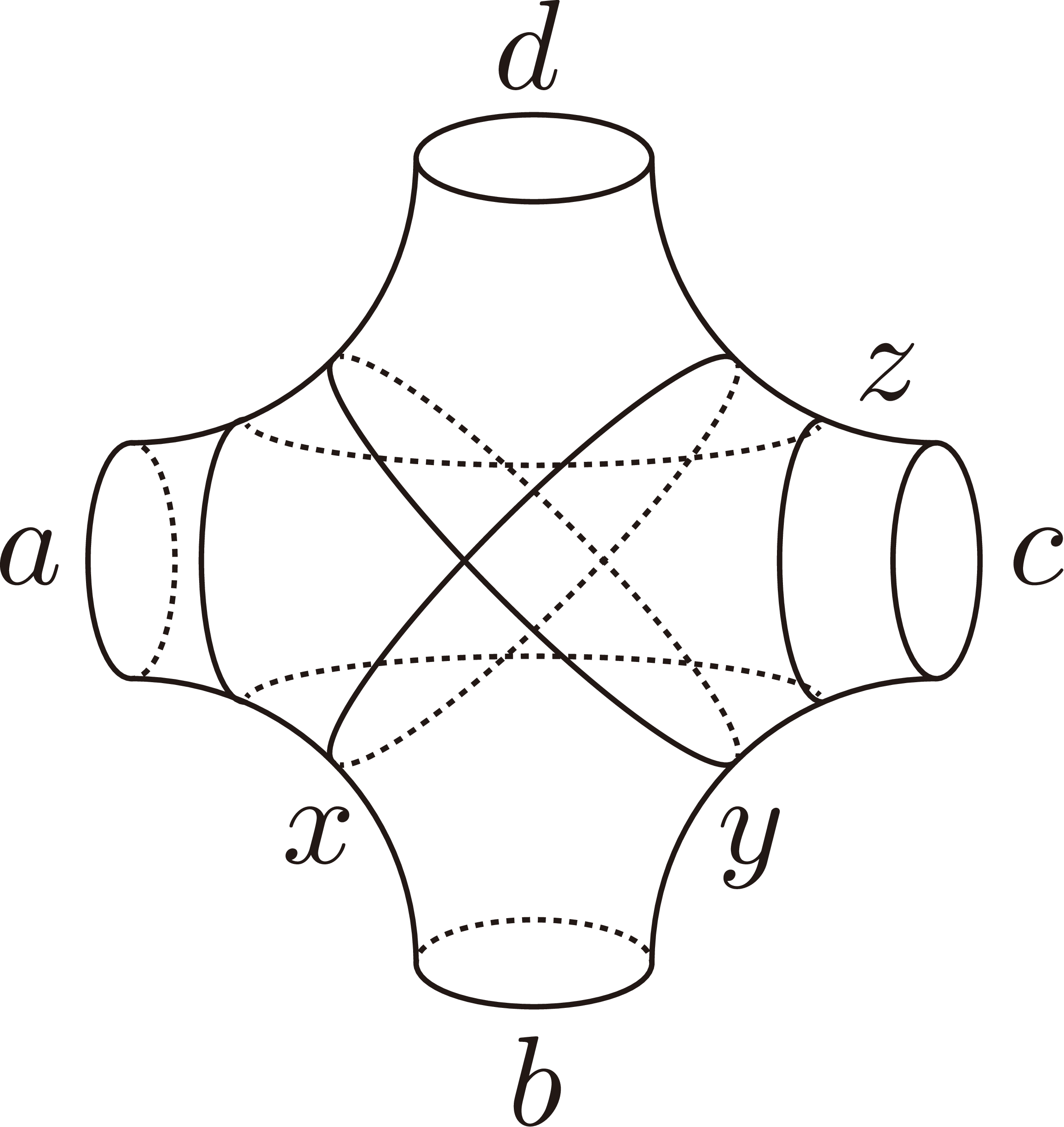}
       \caption{Simple closed curves $a,b,c,d,x,y,z$ on $\Sg$ for the lantern relation.}
       \label{lanterncurves}
  \end{figure}

The notations $a_i,b_i,c_i$ (resp. $\alpha_j$, $\beta$) will always denote the simple closed curves on $\Sigma_g$ shown in Figure~\ref{rotation} (resp.~\ref{humphries1}) for $i=1,2,\ldots,g$ (resp. $j=1,2,\ldots,2g+1$). 
We identify surfaces in Figure 2 and Figure 3 so that $\alpha_1 = a_1$, $\beta=a_2$, $\alpha_{2i} = b_i$, $\alpha_{2i+1} = c_{i}$ for $i=1,2,\ldots,g-1$. 
\begin{figure}[hbt]
  \centering
       \includegraphics[scale=.20]{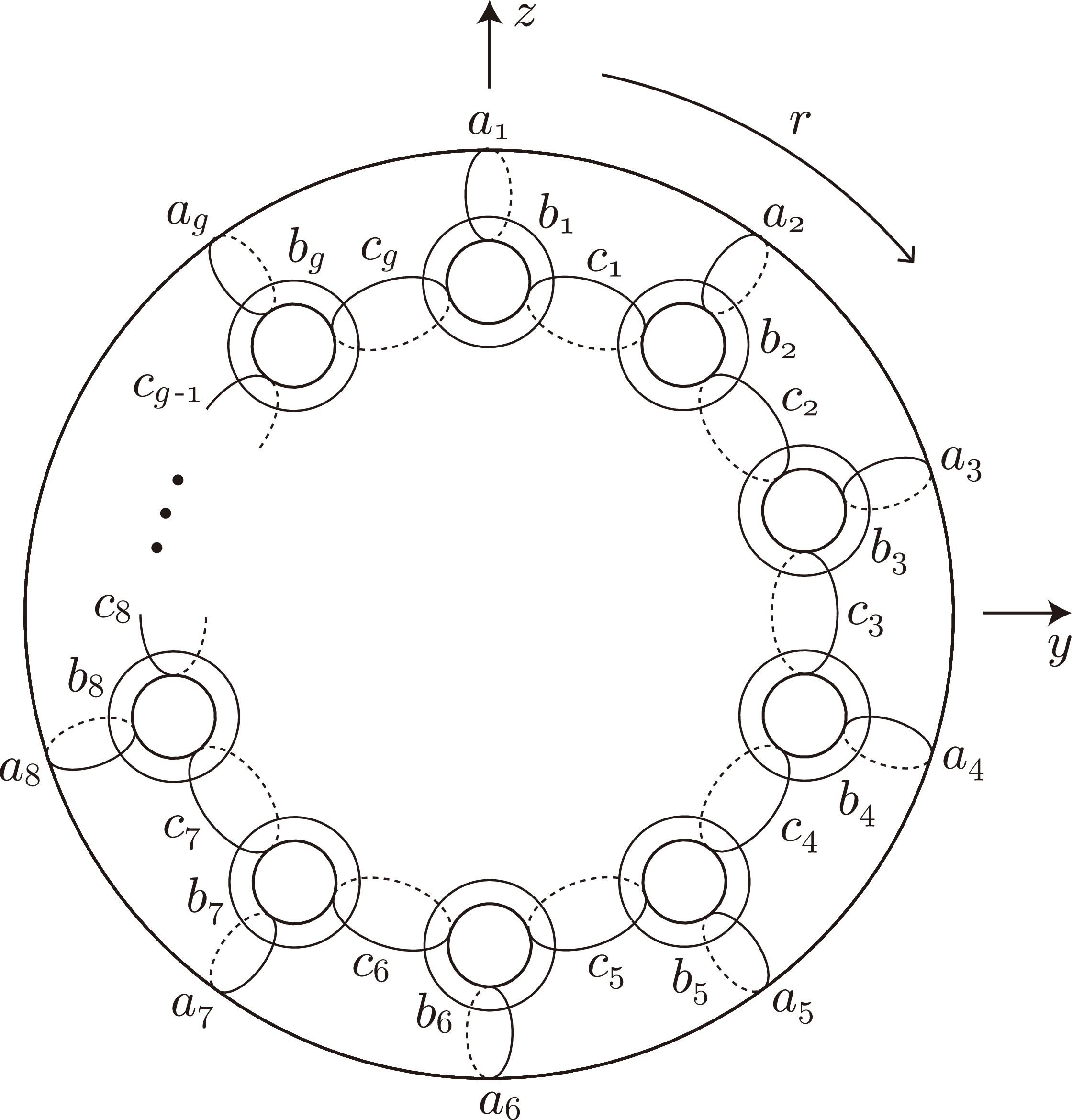}
       \caption{The rotation $r : \Sigma_g \to \Sigma_g$ by $-\frac{2\pi}{g}$ about the $x$-axis and the simple closed curves $a_i,b_i,c_i$ on $\Sigma_g$ for $i=1,2,\ldots,g$.}
       \label{rotation}
  \end{figure}
\begin{figure}[hbt]
  \centering
       \includegraphics[scale=.20]{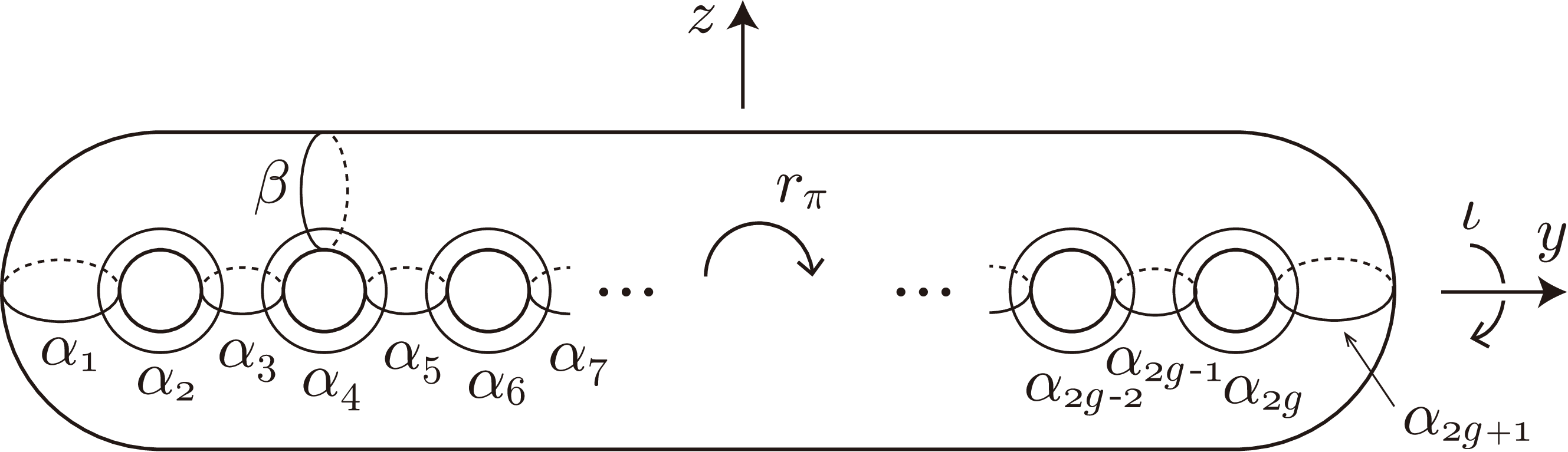}
       \caption{The hyperelliptic involution $\iota: \Sigma_g \to \Sigma_g$, the rotation $r_\pi :\Sigma_g \to \Sigma_g$ by 
       $\pi$ about the $x$-axis, and the simple closed curves $\alpha_1,\alpha_2,\ldots,\alpha_{2g+1},\beta$ on $\Sigma_g$.}
       \label{humphries1}
  \end{figure}

\begin{thm}[\cite{Li}]\label{Lickorishthm}
For $g\geq 1$, $\M$ is generated by $t_{a_1},\ldots,t_{a_g}, t_{b_1},\ldots,t_{b_g}$ and $t_{c_1},\ldots,t_{c_{g-1}}$. 
\end{thm}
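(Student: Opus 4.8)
The plan is to reduce the theorem to a transitivity statement for the subgroup $G:=\langle t_{a_1},\dots,t_{a_g},t_{b_1},\dots,t_{b_g},t_{c_1},\dots,t_{c_{g-1}}\rangle\le\M$ and then prove that by an induction on intersection numbers. Recall that $\M$ is generated by the Dehn twists about \emph{nonseparating} simple closed curves (Dehn \cite{De}); hence it suffices to show $t_c\in G$ for every nonseparating simple closed curve $c$ on $\Sg$. Using the relation $t_{f(a)}=ft_af^{-1}$ recalled in Section~\ref{Pre}, this in turn follows once one knows that $G$ acts transitively on isotopy classes of nonseparating simple closed curves: given such a $c$, choose $f\in G$ with $f(a_1)=c$, and then $t_c=ft_{a_1}f^{-1}\in G$. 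So the entire theorem reduces to this transitivity assertion.

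One direction of the transitivity is straightforward. Put $\mathcal{O}:=G\cdot a_1$. The curves $a_1,b_1,c_1,b_2,c_2,\dots,c_{g-1},b_g$ form a \emph{chain}, i.e.\ consecutive ones meet transversely in a single point; and whenever two simple closed curves $\gamma,\delta$ satisfy $i(\gamma,\delta)=1$, there is an element of $\langle t_\gamma,t_\delta\rangle$ carrying $\gamma$ to $\delta$ (this is the relation $t_bt_a(b)=a$ from Section~\ref{Pre}). Since each twist appearing here is one of the generators of $G$, moving along the chain one step at a time gives $b_1,c_1,\dots,b_g\in\mathcal{O}$; and since $i(a_i,b_i)=1$ with $b_i\in\mathcal{O}$, also $a_2,\dots,a_g\in\mathcal{O}$. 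Thus $\mathcal{O}$ contains all $3g-1$ Lickorish curves, and in particular $t_\gamma\in G$ for every $\gamma\in\mathcal{O}$.

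The other direction — that $\mathcal{O}$ is \emph{all} nonseparating simple closed curves — is the heart of the matter, and I expect it to be the main obstacle. Given an arbitrary nonseparating $c$, I would put it in minimal position with respect to the union of the Lickorish curves and induct on $N(c):=\sum_\gamma i(c,\gamma)$, summed over the $3g-1$ generator curves $\gamma$; since the Lickorish curves fill $\Sg$ (their complement is a disjoint union of disks), $N(c)\ge 1$. If $N(c)$ cannot be strictly decreased by applying any $t_\gamma^{\pm1}$ with $\gamma$ a Lickorish curve, then a direct inspection of the minimal-position picture shows that $c$ is isotopic to one of the $a_i,b_i,c_i$ (or can be carried onto one by a further element of $G$), so $c\in\mathcal{O}$. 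Otherwise, choose a Lickorish curve $X$ and a sign $\varepsilon$ with $N(t_X^\varepsilon(c))<N(c)$; since $t_X^\varepsilon\in G$, induction gives $t_X^\varepsilon(c)\in\mathcal{O}$, and therefore $c\in\mathcal{O}$. The delicate point is producing such an $X$: this is an innermost-bigon/surgery argument in which one must check that removing intersection points of $c$ with $X$ does not create too many intersection points with the remaining generator curves, and it is precisely here that the specific arrangement of the curves $a_i,b_i,c_i$ on $\Sg$ is used.

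An alternative to this surgery bookkeeping is an induction on the genus: the base case $g=1$ is the classical identification $\M\cong SL(2,\mathbb{Z})=\langle t_{a_1},t_{b_1}\rangle$, and the inductive step cuts $\Sg$ along a curve of $\mathcal{O}$ and invokes the Birman exact sequence to reduce to the mapping class group of a surface of smaller complexity (for which the corresponding twists are assumed to generate) together with a point-pushing subgroup, at the cost of tracking how the curves $a_i,b_i,c_i$ restrict to subsurfaces. Either way, once the transitivity of the $G$-action is established, the theorem follows from the reductions in the first paragraph.
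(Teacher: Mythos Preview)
The paper does not prove this theorem at all: Theorem~\ref{Lickorishthm} is stated with the citation \cite{Li} and used as a black box, so there is no ``paper's own proof'' to compare against. Your plan is, in outline, exactly Lickorish's original strategy: reduce to showing that the subgroup $G$ acts transitively on isotopy classes of nonseparating simple closed curves, and prove that transitivity by decreasing a complexity defined via intersection numbers with the $3g-1$ curves.

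That said, what you have written is a plan, not a proof. Two points deserve flagging. First, the input ``$\M$ is generated by Dehn twists about nonseparating simple closed curves'' is not literally Dehn's theorem; Dehn exhibited a specific finite set of twists. Either check that Dehn's curves are nonseparating, or (as Lickorish did) argue directly that an arbitrary homeomorphism can be reduced to the identity by composing with twists in $G$, which avoids invoking Dehn altogether. Second, and more seriously, the step you yourself call ``the delicate point'' is the entire content of the theorem: one must actually prove that if $c$ is not already one of the $a_i,b_i,c_i$, then some $t_X^{\pm1}$ with $X\in\{a_i,b_i,c_i\}$ strictly lowers $N(c)$. Lickorish's argument here is not a generic innermost-bigon trick; it is a case analysis on consecutive intersection points of $c$ with a fixed $b_i$ (looking at whether they occur on the same side), and one shows that either a twist about $b_i$, or about an adjacent $a_i,c_{i-1},c_i$, reduces the count. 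Your proposal gestures at this but does not carry it out, and the alternative via the Birman exact sequence is likewise only sketched. So the approach is correct and standard, but the crucial lemma remains unproved in your write-up.
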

\begin{thm}[\cite{Hu}]
For $g\geq 1$, $\M$ is generated by $t_{\alpha_1},t_{\alpha_2},\ldots,t_{\alpha_{2g}}$ and $t_\beta$. 
\end{thm}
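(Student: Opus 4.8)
The plan is to deduce Humphries' generating set from Lickorish's (Theorem~\ref{Lickorishthm}) by eliminating the redundant twists. Write $H = \langle t_{\alpha_1}, \dots, t_{\alpha_{2g}}, t_\beta\rangle$. Under the identification fixed above — $\alpha_1 = a_1$, $\beta = a_2$, $\alpha_{2i} = b_i$, $\alpha_{2i+1} = c_i$, and $\alpha_{2g} = b_g$ — the generators $t_{\alpha_2}, \dots, t_{\alpha_{2g}}$ comprise exactly the twists $t_{b_1}, \dots, t_{b_g}$ together with $t_{c_1}, \dots, t_{c_{g-1}}$, while $t_{\alpha_1} = t_{a_1}$ and $t_\beta = t_{a_2}$. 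Thus $H$ already contains every Lickorish generator except $t_{a_3}, \dots, t_{a_g}$, and by Theorem~\ref{Lickorishthm} it suffices to prove $t_{a_j} \in H$ for $3 \le j \le g$. The cases $g \le 2$ need no argument: for $g = 2$ there are no such $a_j$, and $g = 1$ is the classical fact that two Dehn twists generate the genus-one mapping class group.

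First I would record the elementary chain lemma: if $x_1, \dots, x_n$ is a chain of simple closed curves (consecutive ones meeting transversely once, non-consecutive ones disjoint), then
\[
(t_{x_1} t_{x_2} \cdots t_{x_k})(x_j) = x_{j+1}, \qquad 1 \le j \le k-1 \le n-1,
\]
which follows by repeatedly applying the relations $t_a(b) = t_b^{-1}(a)$ for curves meeting once and the invariance of a curve under twists about disjoint curves. Applied to the chain $\alpha_1, \dots, \alpha_{2g}$, this shows that products of the $t_{\alpha_i}$ move twists freely along the chain inside $H$. However, each missing curve $a_j$ meets only $b_j = \alpha_{2j}$ among the standard curves, so $a_j$ is an endpoint of every chain through it; the lemma can therefore never realize $a_j$ as an image without already twisting about $a_j$ itself, and tracking homology classes shows that no product of twists about the remaining curves conjugates $t_{a_2}$ into $t_{a_j}$. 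This obstruction is what forces the use of a relation that is not a mere conjugation.

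The key step is therefore the lantern relation. For each $j$ I would exhibit a four-holed sphere $R_j$ embedded in $\Sigma_g$ near the $j$-th handle whose seven lantern curves are chosen so that one interior curve, say $x$, is isotopic to $a_j$, while the twists about the remaining six curves all lie in $H$; these should be expressible through the $b$- and $c$-twists together with the $a$-twists $t_{a_2}, \dots, t_{a_{j-1}}$ produced at earlier inductive stages. Writing the lantern relation as $t_x t_y t_z = t_d t_c t_b t_a$, it rearranges to
\[
t_{a_j} = t_d t_c t_b t_a\, t_z^{-1} t_y^{-1},
\]
exhibiting $t_{a_j}$ as a product of elements of $H$. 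An induction on $j$ starting from $t_{a_2} = t_\beta$ then completes the proof.

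The main obstacle is the geometric step in the previous paragraph: locating the correct four-holed sphere $R_j$ and verifying, isotopy class by isotopy class, that its seven lantern curves are exactly $a_j$ together with six curves whose twists have already been placed in $H$. This is the figure-dependent heart of the argument and must be carried out with care, precisely because the naive candidates — conjugates of $t_{a_2}$ by chain elements — provably fail by the endpoint obstruction noted above. Once the lantern is correctly identified, the remaining bookkeeping (checking that the six twists lie in $H$ at the relevant stage of the induction, and that the relation isolates $t_{a_j}$) is routine.
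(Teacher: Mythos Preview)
The paper does not prove this statement; it is quoted from Humphries \cite{Hu} as background and no argument is given. There is therefore nothing in the paper to compare against.

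Judged on its own, your strategy --- derive Humphries from Lickorish by producing each missing $t_{a_j}$ via a lantern relation --- is precisely the standard one (it is Humphries' original argument, also in \cite{FM}, Section~4.4). But the proposal is not a proof: it stops exactly where the content begins. You have not exhibited the four-holed sphere $R_j$, named its seven curves, or checked that the six twists other than $t_{a_j}$ already lie in $H$; you say only that such a configuration ``should'' exist and label finding it ``the main obstacle.'' That obstacle \emph{is} the proof. Concretely, one takes the sphere with boundary $a_{j-1}$, $a_j$, $a_{j+1}$ and a genus-one separating curve around the $j$-th handle; the three interior curves and the separating boundary are then written explicitly as images of Humphries curves under products of twists already known to lie in $H$ (inductively assuming $t_{a_{j-1}} \in H$), and carrying out this verification is what must actually be supplied.

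A smaller point: the claim that ``tracking homology classes shows that no product of twists about the remaining curves conjugates $t_{a_2}$ into $t_{a_j}$'' is not substantiated, and I do not see how homology alone can yield it --- the classes $[\alpha_1],\dots,[\alpha_{2g}]$ already form a $\mathbb{Z}$-basis of $H_1(\Sigma_g)$, so the transvections they induce leave no obvious invariant separating $[a_2]$ from $[a_j]$. This does not affect the lantern approach, but the sentence should be removed or weakened to the honest remark that no conjugating element is \emph{apparent}.
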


Theorem~\ref{Penner} below is very useful to construct a pseudo-Anosov element. 
A \textit{multicurve} in $\Sigma_g$ is the union of a finite collection of disjoint essential simple closed curves in $\Sigma_g$. 
\begin{thm}[Theorem 3.1 in \cite{Pe}]\label{Penner}
Let $\mathcal{C} = \{\gamma_1,\gamma_2,\ldots,\gamma_m\}$ and $\mathcal{D} = \{\delta_1,\delta_2,\ldots,\delta_n\}$ be multicurves in $\Sigma_g$ such that no two component curves form a bigon and such that $\Sigma_g - \mathcal{C} \cup \mathcal{D}$ is a union of disks. 
Any product of positive powers of the Dehn twist $t_{\gamma_i}$ and negative powers of the Dehn twist $t_{\delta_j}$, where each $\gamma_i$ and each $\delta_j$ appear at least once, is pseudo-Anosov. 
\end{thm}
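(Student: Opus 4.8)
The plan is to verify Thurston's criterion: a mapping class $\phi$ of $\Sigma_g$ is pseudo-Anosov as soon as one exhibits two transverse, filling measured foliations that are fixed by $\phi$ and scaled by $\lambda$ and $\lambda^{-1}$ for some $\lambda>1$ (see \cite{FM}). Writing $P$ for the word in the statement, I would produce such foliations as Perron--Frobenius eigenvectors on the weight space of a train track built from $\mathcal{C}\cup\mathcal{D}$: the ``$\mathcal{C}$-part'' of the construction yields the unstable foliation of $P$, and running the same argument for $P^{-1}$ yields the stable one.

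\emph{The train track.} The hypotheses that each component curve is essential and that no two of them bound a bigon put $\mathcal{C}\cup\mathcal{D}$ in minimal position, so the combinatorics of the picture are canonical. At each point where some $\gamma\in\mathcal{C}$ crosses some $\delta\in\mathcal{D}$ I would replace the transverse crossing by a tangency, making the choice of side consistently throughout; this produces a bigon track $\tau$. The hypothesis that $\Sigma_g\setminus(\mathcal{C}\cup\mathcal{D})$ is a union of disks translates into the statement that every complementary region of $\tau$ is a disk, i.e. $\tau$ is filling. Since each component curve is carried by $\tau$ with positive weights, $\tau$ is recurrent, and since it fills it is transversely recurrent, so $\tau$ is birecurrent; in particular the cone $V(\tau)$ of measures carried by $\tau$ — assignments of non-negative weights to the branches satisfying the switch conditions — is full-dimensional, and every measure in its interior defines a measured foliation that is filling because the complementary regions of $\tau$ are disks.

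\emph{The Perron--Frobenius matrix.} The crucial local computation is that the consistent smoothing was chosen so that the right-handed twist $t_{\gamma_i}$ about each $\gamma_i\in\mathcal{C}$ carries $\tau$ into itself, and so does the left-handed twist $t_{\delta_j}^{-1}$ about each $\delta_j\in\mathcal{D}$; on $V(\tau)$ each of these carryings acts by a non-negative integer matrix of the form $I+N$ with $N$ a non-zero non-negative integer matrix detecting the branches met by the twisting curve. Hence $P$, being a product of positive powers of the $t_{\gamma_i}$ and negative powers of the $t_{\delta_j}$ with every curve occurring, carries $\tau$ into itself and acts on $V(\tau)$ by the corresponding product $M$ of matrices $I+N$. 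Because every $\gamma_i$ and every $\delta_j$ is used and $\mathcal{C}\cup\mathcal{D}$ fills $\Sigma_g$, the associated digraph on branches is strongly connected, so $M$ is primitive: $M^{k}$ is strictly positive for some $k\geq 1$. As $\tau$ has at least two branches, every row sum of $M^{k}$ is at least $2$, so the Perron root $\lambda$ of $M$ satisfies $\lambda>1$, and the (strictly positive) Perron eigenvector lies in the interior of $V(\tau)$; it represents a filling measured foliation $\mathcal{F}^{u}$ carried by $\tau$ with $P(\mathcal{F}^{u})=\lambda\,\mathcal{F}^{u}$.

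\emph{Conclusion and the main difficulty.} Now $P^{-1}$ is again of the form in the statement, with $\mathcal{C}$ and $\mathcal{D}$ interchanged (it is a product of positive powers of the $t_{\delta_j}$ and negative powers of the $t_{\gamma_i}$, each used, and $\mathcal{C}\cup\mathcal{D}$ still fills and has no bigons), so the same construction applied to $P^{-1}$ and the oppositely smoothed bigon track produces a filling measured foliation $\mathcal{F}^{s}$ with $P(\mathcal{F}^{s})=\mu\,\mathcal{F}^{s}$ for some $0<\mu<1$. Being scaled by different factors, $\mathcal{F}^{u}$ and $\mathcal{F}^{s}$ are distinct; being filling, they are transverse and jointly fill $\Sigma_g$; and the flat structure they define shows that $P$ multiplies area by $\lambda\mu$, which must equal $1$, so $\mu=\lambda^{-1}$. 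Thurston's criterion then gives that $P$ is pseudo-Anosov with dilatation $\lambda$. I expect the genuine work to lie in the local smoothing step: arranging a single bigon track that is preserved by the right twists about all of $\mathcal{C}$ \emph{and} by the left twists about all of $\mathcal{D}$, and reading off the matrices $I+N$; a secondary point requiring care is the combinatorial claim that $M$ is primitive, which is precisely where the hypotheses that $\mathcal{C}\cup\mathcal{D}$ fills $\Sigma_g$ and that every curve appears are both used.
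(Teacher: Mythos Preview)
The paper does not prove this theorem: it is quoted as Theorem~3.1 of Penner~\cite{Pe} and used as a black box in Lemmas~\ref{ex:1} and~\ref{ex:2}, with no proof given. So there is nothing in the paper to compare against.

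Your outline is essentially Penner's own argument from~\cite{Pe}: smooth $\mathcal{C}\cup\mathcal{D}$ to a bigon track $\tau$, observe that each $t_{\gamma_i}$ and each $t_{\delta_j}^{-1}$ carries $\tau$ to itself by an elementary matrix $I+N$, show the product is Perron--Frobenius, and read off the invariant foliations. A couple of points deserve more care than you give them. First, $\tau$ is a \emph{bigon} track, not a genuine train track (the complementary regions include bigons coming from adjacent crossings along a curve), so the passage from a positive eigenvector on $V(\tau)$ to a genuine measured foliation on $\Sigma_g$ requires collapsing these bigons; Penner handles this explicitly, and your sentence ``every measure in its interior defines a measured foliation that is filling because the complementary regions of $\tau$ are disks'' skates over it. Second, your justification ``since it fills it is transversely recurrent'' is not a general fact about filling tracks; here one checks it directly from the curves in $\mathcal{C}\cup\mathcal{D}$. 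Finally, the primitivity of $M$ is indeed the combinatorial heart, and you correctly locate it, but the argument that the branch digraph is strongly connected really uses both that $\mathcal{C}\cup\mathcal{D}$ fills \emph{and} that the curves of $\mathcal{C}$ and $\mathcal{D}$ alternate in the sense that every $\gamma_i$ meets some $\delta_j$ and vice versa (which follows from the bigon-free, filling hypotheses); spelling this out is where the work lies.
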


\begin{lem}\label{ex:1}
We define 
\begin{align*}
&h_{2i-1} = t_{\alpha_{2i-1}} t_{\alpha_{2i}}^{-1} \cdot t_{\alpha_{2i+1}} t_{\alpha_{2i+2}}^{-1} \cdots t_{\alpha_{2g-1}} t_{\alpha_{2g}}^{-1} \cdot t_{\alpha_{2g+1}}, \ \ \mathrm{and} \\ 
& 
h_{2i} = t_{\alpha_{2i}}^{-1} \cdot t_{\alpha_{2i+1}} t_{\alpha_{2i+2}}^{-1}  \cdots t_{\alpha_{2g-1}} t_{\alpha_{2g}}^{-1} \cdot t_{\alpha_{2g+1}}
\end{align*}
for $i=1,2,\ldots, g$ and $h_{2g+1} = t_{\alpha_{2g+1}}$. 
Moreover, we set 
\begin{align*}
&f_{2i-1} = t_\beta \cdot t_{\alpha_{2i-1}} t_{\alpha_{2i}}^{-1} \cdot t_{\alpha_{2i+1}} t_{\alpha_{2i+2}}^{-1} \cdots t_{\alpha_{2g-1}} t_{\alpha_{2g}}^{-1}, \ \ \mathrm{and} \\
&
f_{2i} = t_\beta \cdot t_{\alpha_{2i}}^{-1} \cdot t_{\alpha_{2i+1}} t_{\alpha_{2i+2}}^{-1} \cdots t_{\alpha_{2g-1}} t_{\alpha_{2g}}^{-1}
\end{align*}
for $i=1,2,\ldots,g$. 
Then, $h_1$ and $h_2$ are pseudo-Anosov elements in $\H$, and $f_1$ and $f_2$ are pseudo-Anosov elements in $\M$. 
\end{lem}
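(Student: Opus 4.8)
The plan is to apply Penner's criterion (Theorem~\ref{Penner}) directly to each of the four elements $h_1, h_2, f_1, f_2$ after separating the Dehn twist generators into two multicurves according to the sign of their exponents. For $h_1 = t_{\alpha_1} t_{\alpha_2}^{-1} t_{\alpha_3} t_{\alpha_4}^{-1} \cdots t_{\alpha_{2g-1}} t_{\alpha_{2g}}^{-1} t_{\alpha_{2g+1}}$, I would set $\mathcal{C} = \{\alpha_1, \alpha_3, \alpha_5, \ldots, \alpha_{2g+1}\}$ (the odd-indexed curves, which carry positive powers) and $\mathcal{D} = \{\alpha_2, \alpha_4, \ldots, \alpha_{2g}\}$ (the even-indexed curves, which carry negative powers). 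The element $h_1$ is then visibly a product of positive powers of the $t_{\gamma_i}$ and negative powers of the $t_{\delta_j}$ in which each curve appears exactly once, so Penner's theorem applies provided the hypotheses on $\mathcal{C} \cup \mathcal{D}$ hold. For $h_2 = t_{\alpha_2}^{-1} t_{\alpha_3} t_{\alpha_4}^{-1} \cdots t_{\alpha_{2g+1}}$ the same partition works, just omitting $\alpha_1$ from $\mathcal{C}$; since $\{\alpha_2,\alpha_3,\ldots,\alpha_{2g+1}\}$ already fills the surface when $g\geq 1$ (these are $2g$ of the Humphries curves, and in fact $\alpha_2,\ldots,\alpha_{2g}$ together with one more suffice), the argument is unaffected. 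For $f_1$ and $f_2$ I would move $\beta$ into $\mathcal{C}$, i.e. take $\mathcal{C} = \{\beta, \alpha_1, \alpha_3, \ldots, \alpha_{2g-1}\}$ and $\mathcal{D} = \{\alpha_2, \alpha_4, \ldots, \alpha_{2g}\}$ for $f_1$ (and drop $\alpha_1$ for $f_2$), again with each curve appearing exactly once with the correct sign.

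The two conditions to verify are: (i) no two of the curves in $\mathcal{C} \cup \mathcal{D}$ form a bigon, and (ii) $\Sigma_g$ cut along $\mathcal{C} \cup \mathcal{D}$ is a union of disks. For condition~(i), all the curves involved are among the standard Humphries curves $\alpha_1,\ldots,\alpha_{2g+1},\beta$ drawn in Figure~\ref{humphries1}, which are in minimal position (each pair meets in at most one point, consecutive $\alpha_i,\alpha_{i+1}$ meeting once and $\beta$ meeting only $\alpha_4$ once, all others disjoint), so there are no bigons. For condition~(ii), the key observation is that $\{\alpha_1,\alpha_2,\ldots,\alpha_{2g+1}\}$ is exactly the standard chain of $2g+1$ curves whose union has complement a union of disks in $\Sigma_g$ — indeed cutting along this chain is the classical way of seeing $\Sigma_g$ as built from the chain — so for $h_1$ the complement is a union of disks. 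For $h_2$ one checks that removing $\alpha_1$ still leaves a filling collection (the chain $\alpha_2,\ldots,\alpha_{2g+1}$ together with the remaining curve closes up to fill), or equivalently that $\alpha_1$ is not needed to kill all of $\pi_1$; this is where I would draw the picture. For $f_1, f_2$ the curve $\beta$ can only help: $\{\alpha_1,\ldots,\alpha_{2g+1}\}$ already fills, and adding $\beta$ (or exchanging which curves lie in $\mathcal{C}$ versus $\mathcal{D}$) preserves that the complement is a union of disks, since adding an essential curve to a filling collection keeps it filling.

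The main obstacle I anticipate is the bookkeeping in condition~(ii) for the elements that omit $\alpha_1$, namely $h_2$ and $f_2$: one must confirm that the curves actually appearing still fill the surface. I would handle this by appealing to the identification $\alpha_1 = a_1$, $\beta = a_2$, $\alpha_{2i} = b_i$, $\alpha_{2i+1} = c_i$ stated just before Theorem~\ref{Lickorishthm}, and by the well-known fact that cutting $\Sigma_g$ along the Lickorish/Humphries curves yields disks; removing $a_1$ from this collection, the remaining curves $\beta = a_2, b_1,\ldots,b_{g-1}, c_1,\ldots,c_{g-1}$ (which are $\alpha_2,\ldots,\alpha_{2g+1}$) still cut the surface into disks because $a_1$ is a curve in the "first handle" that is not needed to separate the rest — a direct inspection of Figure~\ref{humphries1} (or Figure~\ref{rotation}) confirms this. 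Once both conditions are in place for each of the four partitions, Penner's theorem (Theorem~\ref{Penner}) immediately yields that $h_1, h_2$ are pseudo-Anosov, and since each is a product of Dehn twists about curves fixed setwise by the hyperelliptic involution $\iota$ — more precisely, each $t_{\alpha_j}$ commutes with $\iota$ because $\iota(\alpha_j) = \alpha_j$ — the elements $h_1, h_2$ lie in $\H$; similarly $f_1, f_2 \in \M$ are pseudo-Anosov. This completes the proof.
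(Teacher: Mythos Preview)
Your approach is essentially identical to the paper's: both apply Penner's criterion (Theorem~\ref{Penner}) with the natural partition of the chain curves into odd-indexed (positive twists) and even-indexed (negative twists), adding $\beta$ to the positive side for $f_1,f_2$ and dropping $\alpha_1$ for $h_2,f_2$. The paper simply asserts that the hypotheses of Theorem~\ref{Penner} hold, whereas you discuss the filling and no-bigon conditions in more detail.

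A couple of minor bookkeeping slips to clean up: for $f_1$ and $f_2$ the relevant curve system is $\{\beta,\alpha_1,\ldots,\alpha_{2g}\}$ (resp.\ $\{\beta,\alpha_2,\ldots,\alpha_{2g}\}$), not anything involving $\alpha_{2g+1}$, so your sentence ``$\{\alpha_1,\ldots,\alpha_{2g+1}\}$ already fills, and adding $\beta$\ldots'' is aimed at the wrong collection---what you actually need is that the Humphries system $\{\alpha_1,\ldots,\alpha_{2g},\beta\}$ (and this system minus $\alpha_1$) fills, which is equally standard. Also, your enumeration ``$\beta=a_2, b_1,\ldots,b_{g-1}, c_1,\ldots,c_{g-1}$ (which are $\alpha_2,\ldots,\alpha_{2g+1}$)'' is off by one curve and mixes the $h_2$ and $f_2$ cases; for $h_2$ the curves are $\alpha_2,\ldots,\alpha_{2g+1}$ (a chain of $2g$ curves, whose regular neighborhood has genus $g$ with one boundary, hence fills), while for $f_2$ they are $\beta,\alpha_2,\ldots,\alpha_{2g}$. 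These are cosmetic; the argument is sound and matches the paper.
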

\begin{proof}
Since $t_{\alpha_1},t_{\alpha_2},\ldots,t_{\alpha_{2g+1}}$ are generators of $\H$ (see \cite{BH}), $h_1$ is also in $\H$. 
In the notation of Theorem 3.1 in \cite{Pe}, let $\mathscr{C} = \{\alpha_1,\alpha_3,\ldots,\alpha_{2g-1},\alpha_{2g+1}\}$ and $\mathscr{D} =\{\alpha_2,\alpha_4,\ldots,\alpha_{2g}\}$ be two multicurves on $\Sg$. 
Then, these two multicurves $\mathscr{C}$ and $\mathscr{D}$ satisfy the condition of Theorem~\ref{Penner}, and therefore, we see that $h_1$ is a pseudo-Anosov element. 
By applying these arguments again, with $\mathscr{C} = \{\alpha_1,\alpha_3,\ldots,\alpha_{2g-1},$ $\alpha_{2g+1}\}$ replaced by $\mathscr{C}' = \{\alpha_3,\alpha_5,\ldots,\alpha_{2g+1}\}$, we see that $h_2$ is also a pseudo-Anosov in $\H$. 

Similarly, when we take two multi-curves $\mathscr{C} = \{\alpha_1,\alpha_3,\ldots,\alpha_{2g-1},\beta\} $ (resp. $\mathscr{C} = \{\alpha_3,\alpha_5,\ldots,\alpha_{2g-1},\beta\}$) and $\mathscr{D} =\{\alpha_2,\alpha_4,\ldots,\alpha_{2g}\}$ for $f_1$ (resp. $f_2$), we see that $f_1$ (resp. $f_2$) is a pseudo-Anosov in $\M$. 
\end{proof}

Throughout this section, we use the notation $\prod_{i=1}^k f_i = f_1 f_2 \cdots f_k$ for a product $f_1f_2 \cdots f_k$ of $k$ elements $f_1,f_2,\ldots,f_k$ in $\M$. 
\begin{lem}\label{ex:2}
Set $\rho_n = r t_{c_1} t_{b_1}^{-n} t_{a_1}$ for a positive integer $n$.  
Then, $\rho_n$ is pseudo-Anosov in $\M$ for $g\geq 3$. 
Furthermore, for any positive real number $\lambda$, there is an integer $n$ 
such that the dilatation of $\rho_n$ is more than $\lambda$. 
\end{lem}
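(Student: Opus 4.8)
The plan is to pass to a suitable power and then invoke Theorem~\ref{Penner}. Being pseudo-Anosov is unchanged under taking powers and roots — if $\phi^{m}$ is periodic or reducible then so is $\phi$, since the canonical reduction system of $\phi$ is also $\phi^{m}$-invariant, and a pseudo-Anosov power has a pseudo-Anosov root — so it is enough to show that $\rho_n^{\,g}$ is pseudo-Anosov with $\lambda(\rho_n^{\,g})$ unbounded in $n$. Using $r^{g}=\mathrm{id}$ and $r t_{a} r^{-1}=t_{r(a)}$, and writing $w=t_{c_1}t_{b_1}^{-n}t_{a_1}$, one gets
\begin{align*}
\rho_n^{\,g}=(rw)^{g}=\bigl(r^{-(g-1)}w\,r^{g-1}\bigr)\bigl(r^{-(g-2)}w\,r^{g-2}\bigr)\cdots\bigl(r^{-1}w\,r\bigr)\,w .
\end{align*}
From Figure~\ref{rotation} the rotation $r$ cyclically permutes the handles, so $r(a_i)=a_{i+1}$, $r(b_i)=b_{i+1}$, $r(c_i)=c_{i+1}$ with indices taken modulo $g$ (setting $c_g:=r(c_{g-1})$), hence $r^{-k}w\,r^{k}=t_{c_{1-k}}t_{b_{1-k}}^{-n}t_{a_{1-k}}$. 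Thus $\rho_n^{\,g}$ is a cyclic word in the twists $t_{a_i},t_{b_i}^{-1},t_{c_i}$, and so is conjugate in $\M$ to
\begin{align*}
\psi_n=\prod_{i=1}^{g} t_{c_i}\,t_{b_i}^{-n}\,t_{a_i}.
\end{align*}

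Next I would apply Theorem~\ref{Penner} to $\psi_n$ with the multicurves $\mathcal{C}=\{a_1,\dots,a_g,c_1,\dots,c_g\}$ and $\mathcal{D}=\{b_1,\dots,b_g\}$. From Figure~\ref{rotation} one checks that $\mathcal{C}$ and $\mathcal{D}$ are indeed multicurves (the $a_i$ are pairwise disjoint, the $c_i$ are pairwise disjoint, and each $a_i$ is disjoint from each $c_j$), that no two of the displayed curves bound a bigon, and that for $g\ge 3$ the union $\mathcal{C}\cup\mathcal{D}$ fills $\Sg$, i.e.\ $\Sg\setminus(\mathcal{C}\cup\mathcal{D})$ is a union of disks; the curves $c_i$ are essential for filling, since $\{a_i,b_i\}_{i=1}^{g}$ alone do not fill. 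In $\psi_n$ each $a_i$ and each $c_i$ appears to a positive power and each $b_i$ to the negative power $-n$, so Theorem~\ref{Penner} applies and shows that $\psi_n$, and hence $\rho_n^{\,g}$ and $\rho_n$, is pseudo-Anosov.

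For the dilatations we have $\lambda(\rho_n)^{g}=\lambda(\rho_n^{\,g})=\lambda(\psi_n)$, so it suffices to show $\lambda(\psi_n)\to\infty$. By Penner's construction $\psi_n$ is carried by the standard train track $\tau$ of the filling pair $(\mathcal{C},\mathcal{D})$, and $\lambda(\psi_n)$ equals the Perron--Frobenius eigenvalue of the non-negative integral transition matrix $M_n$ recording how $\psi_n$ runs the branches of $\tau$ over themselves; since $M_n$ is the product of the elementary matrices of the individual twists and the factor coming from $t_{b_i}^{-n}$ wraps the branches crossing $b_i$ around $b_i$ a total of $n$ times, the entries of $M_n$ are affine in $n$, with at least one of them growing linearly. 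As $M_n$ is primitive (being the transition matrix of a pseudo-Anosov track map), its support is independent of $n$ and its size $N$ is constant, so there is a closed edge-path in $\tau$ of some fixed length $\ell$ whose $M_n$-weight grows with $n$; hence $\lambda(\psi_n)^{\ell}\ge N^{-1}\,\mathrm{tr}(M_n^{\ell})$ is unbounded, and $\lambda(\rho_n)=\lambda(\psi_n)^{1/g}\to\infty$. In particular, for every $\lambda>0$ there is an $n$ with $\lambda(\rho_n)>\lambda$. I expect the main obstacle to be exactly this last step: one must make sure that raising the twist power $n$ really drives the Perron--Frobenius eigenvalue to infinity rather than merely conjugating a bounded family, which requires writing down an explicit train track for $(\mathcal{C},\mathcal{D})$ and a loop on it whose weight visibly grows with $n$; a secondary point is pinning down the action of $r$ on the curves in Figure~\ref{rotation} and verifying, for $g\ge 3$, that $\{a_i,b_i,c_i\}_{i=1}^{g}$ fills $\Sg$ with no bigons.
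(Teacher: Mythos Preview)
Your approach is essentially the paper's: compute $\rho_n^{\,g}$ using $r^g=\mathrm{id}$, recognise it as a Penner word in positive twists about $\{a_i,c_i\}$ and negative twists about $\{b_i\}$, and then analyse the transition matrix on the associated bigon track. The conjugation to your $\psi_n$ is harmless but unnecessary; the paper works directly with $\rho_n^{\,g}$.

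The one place where the paper is sharper is the dilatation bound, which you correctly flag as the delicate step. Rather than arguing qualitatively that some entry of $M_n$ grows and invoking a trace estimate, the paper uses entrywise monotonicity of Perron--Frobenius eigenvalues: since every elementary twist matrix is $\geq I_{3g}$, one may drop all factors but $M_{t_{b_1}^{-n}}M_{t_{c_1}}$, compute this $3g\times 3g$ product explicitly, and read off that it dominates a diagonal matrix with top entry $n+1$, giving $\lambda(\rho_n)^g\geq n+1$. This is exactly the ``explicit loop whose weight visibly grows'' you anticipate needing. One small slip in your sketch: the entries of $M_n$ are polynomial in $n$ (of degree up to $g$, since there are $g$ factors $t_{b_i}^{-n}$), not affine; this only strengthens your conclusion, but it is worth stating correctly.
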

\begin{proof}
We note that $\rho_n^g = \left(\prod_{i=1}^g r^i t_{c_1} t_{b_1}^{-n} t_{a_1} r^{-i}\right) \cdot r^g$ and $r^g = \mathrm{id}$. This gives $\rho_n^g = \prod_{i=1}^g r^i t_{c_1} t_{b_1}^{-n} t_{a_1} r^{-i}$, and hence $\rho_n^g = \prod_{i=1}^g r^i t_{c_1} r^{-i} \cdot r^i t_{b_1}^{-n} r^{-i} \cdot r^i t_{a_1} r^{-i}$. 
Since $r(a_i, b_i, c_i) = (a_{i+1}, b_{i+1}, c_{i+1})$ for $i=1,2,\ldots,g-1$ and $r(a_g, b_g, c_g) = (a_1, b_1, c_1)$, we have
\begin{align*}
\rho_n^g = \prod_{i=1}^g t_{r^i(c_1)} t_{r^i(b_1)}^{-n} t_{r^i(a_1)} = t_{a_2} t_{b_2}^{-n} t_{c_2} \cdot t_{a_3} t_{b_3}^{-n} t_{c_3} \cdots t_{a_g} t_{b_g}^{-n} t_{c_g} \cdot t_{a_1} t_{b_1}^{-n} t_{c_1}. 
\end{align*}
If we take two multicurves $\mathscr{C} = \{a_1, c_1, a_2, c_2,\ldots, a_g, c_g\}$ and $\mathscr{D} = \{b_1, b_2,\ldots,b_g\}$ then 
by Theorem~\ref{Penner}, we see that $\rho_n^g$ is a pseudo-Anosov element. 
Therefore, $\rho_n$ is a pseudo-Anosov.

We will show that, for any positive real number $\lambda$, there is an integer $n$ such that the dilatation of $\rho_n$ is more than $\lambda$. 
Let $\tau$ be the train track on $\Sigma_g$ which carries each element of $\mathscr{C} \cup \mathscr{D}$. 
As shown in Figure~\ref{fig:circle-track}, the train track $\tau$ has branches $\delta_i$ for $i=1, \ldots, 3g$, $\mu_{3j-2}^k$ for $j=1,\ldots,g, k=1,2,3$, $\mu_{3j-1}$ for $j=1, \ldots, g$ and $\mu_{3j}^k$ for $j=1,\ldots,g, k=1,2$. 
For each branch, the transverse measure is indicated by the same symbol. 
The train track $\rho_n^g (\tau)$ is carried by $\tau$, and $t_{a_i} t_{b_i}^{-n} t_{c_i}(\tau)$ are also carried by $\tau$ for $i=1, \ldots, g$.  
The set $H$ of transverse mesures on $\tau$ determined by the equations $\mu_{3j-2}^1 = \mu_{3j-2}^2 = \mu_{3j-2}^3, \mu_{3j}^1 = \mu_{3j}^2$ is preserved by the action of  $t_{a_i}$ $t_{b_i}^{-1}$ and $t_{c_i}$. 
We denote $\mu_{3j-2}^1, \mu_{3j-2}^2, \mu_{3j-2}^3$ by $\mu_{3j-2}$ and $\mu_{3j}^1, \mu_{3j}^2$ by $\mu_{3j}$ for short. 
The actions of $\rho_n^g$ and $t_{a_i} t_{b_i}^{-n} t_{c_i}$ for $i=1, \ldots, g$ on $H$ are determined by the actions of them on values $\mu_j$ for $j=1,\ldots,3g$. 
For the homeomorphism $\varphi$ on $\Sigma_g$ such that $\varphi(\tau)$ is carried by $\tau$, let $M_{\varphi}$ be the incident matrix for $\varphi$, that is, the $(i,j)$ entry $(M_{\varphi})_{i,j}$ of $M_{\varphi}$ is the times $\varphi(\mu_j)$ passing through $\mu_i$. 
Then the matrix $M_{\rho_n^g}$ is a non-negative irreducible matrix and the maximal eigenvalue $\mathrm{PF}(M_{\rho_n^g})$ of $M_{\rho_n^g}$ equals the dilatation $\lambda_n^g$ of $\rho_n^g$, where $\lambda_n$ is the dilatation of $\rho_n$. 
Let $M$ and $N$ be matrices with the same size.  
If, for each $(i,j)$ entries $M_{i,j}$ and $N_{i,j}$ of $M$ and $N$ respectively, $M_{i.j} \geq N_{i,j}$, then we write $M \geq N$. 
If $\psi = t_{a_i}$ or $t_{b_i}^{-1}$ or $t_{c_i}$, then $M_{\psi} = I_{3g} + P$, where $I_{3g}$ is the identity matrix of order $3g$ and $P$ is a matrix all of whose entries are non-negative. 
Therefore, 
\begin{align*}
&M_{\rho_n^g} \geq M_{t_{b_1}^{-n}} M_{t_{c_1}}  \\ 
&= \left(
\begin{array}{c|c|c|c}
{\begin{array}{ccc} 1 & n & n \\ 0 & 1 & 0 \\ 0 & 0 & 1 \end{array}} 
&  {\begin{array}{ccc} 0 & 0 & 0 \\ 0 & 0 & 0 \\ 0 & 0 & 0 \end{array}}
& O 
&  {\begin{array}{ccc} 0 & 0 & n \\ 0 & 0 & 0 \\ 0 & 0 & 0 \end{array}} \\ \hline 
{\begin{array}{ccc} 0 & 0 & 0 \\ 0 & 0 & 0 \\ 0 & 0 & 0 \end{array}} 
&  {\begin{array}{ccc} 1 & 0 & 0 \\ 0 & 1 & 0 \\ 0 & 0 & 1 \end{array}}
& O 
&  {\begin{array}{ccc} 0 & 0 & 0 \\ 0 & 0 & 0 \\ 0 & 0 & 0 \end{array}} \\ \hline
O & O & {\begin{array}{c} \\ I_{3g-9} \\  \\ \end{array}} & O \\ \hline
{\begin{array}{ccc} 0 & 0 & 0 \\ 0 & 0 & 0 \\ 0 & 0 & 0 \end{array}} 
&  {\begin{array}{ccc} 0 & 0 & 0 \\ 0 & 0 & 0 \\ 0 & 0 & 0 \end{array}}
& O 
&  {\begin{array}{ccc} 1 & 0 & 0 \\ 0 & 1 & 0 \\ 0 & 0 & 1 \end{array}}
\end{array}
\right)
\left(
\begin{array}{c|c|c|c}
{\begin{array}{ccc} 1 & 0 & 0 \\ 0 & 1 & 0 \\ 1 & 0 & 1 \end{array}} 
&  {\begin{array}{ccc} 0 & 0 & 0 \\ 0 & 0 & 0 \\ 1 & 0 & 0 \end{array}}
& O 
&  {\begin{array}{ccc} 0 & 0 & 0 \\ 0 & 0 & 0 \\ 0 & 0 & 0 \end{array}} \\ \hline 
{\begin{array}{ccc} 0 & 0 & 0 \\ 0 & 0 & 0 \\ 0 & 0 & 0 \end{array}} 
&  {\begin{array}{ccc} 1 & 0 & 0 \\ 0 & 1 & 0 \\ 0 & 0 & 1 \end{array}}
& O 
&  {\begin{array}{ccc} 0 & 0 & 0 \\ 0 & 0 & 0 \\ 0 & 0 & 0 \end{array}} \\ \hline
O & O & {\begin{array}{c} \\ I_{3g-9} \\  \\ \end{array}} & O \\ \hline
{\begin{array}{ccc} 0 & 0 & 0 \\ 0 & 0 & 0 \\ 0 & 0 & 0 \end{array}} 
&  {\begin{array}{ccc} 0 & 0 & 0 \\ 0 & 0 & 0 \\ 0 & 0 & 0 \end{array}}
& O 
&  {\begin{array}{ccc} 1 & 0 & 0 \\ 0 & 1 & 0 \\ 0 & 0 & 1 \end{array}}
\end{array}
\right) \\
&= 
\left(
\begin{array}{c|c|c|c}
{\begin{array}{ccc} n+1 & n & n \\ 0 & 1 & 0 \\ 1 & 0 & 1 \end{array}} 
&  {\begin{array}{ccc} n & 0 & 0 \\ 0 & 0 & 0 \\ 1 & 0 & 0 \end{array}}
& O 
&  {\begin{array}{ccc} 0 & 0 & n \\ 0 & 0 & 0 \\ 0 & 0 & 0 \end{array}} \\ \hline 
{\begin{array}{ccc} 0 & 0 & 0 \\ 0 & 0 & 0 \\ 0 & 0 & 0 \end{array}} 
&  {\begin{array}{ccc} 1 & 0 & 0 \\ 0 & 1 & 0 \\ 0 & 0 & 1 \end{array}}
& O 
&  {\begin{array}{ccc} 0 & 0 & 0 \\ 0 & 0 & 0 \\ 0 & 0 & 0 \end{array}} \\ \hline
O & O & {\begin{array}{c} \\ I_{3g-9} \\  \\ \end{array}} & O \\ \hline
{\begin{array}{ccc} 0 & 0 & 0 \\ 0 & 0 & 0 \\ 0 & 0 & 0 \end{array}} 
&  {\begin{array}{ccc} 0 & 0 & 0 \\ 0 & 0 & 0 \\ 0 & 0 & 0 \end{array}}
& O 
&  {\begin{array}{ccc} 1 & 0 & 0 \\ 0 & 1 & 0 \\ 0 & 0 & 1 \end{array}}
\end{array}
\right)
\geq 
\left(
\begin{array}{c|c|c|c}
{\begin{array}{ccc} n+1 & 0 & 0 \\ 0 & 1 & 0 \\ 0 & 0 & 1 \end{array}} 
&  {\begin{array}{ccc} 0 & 0 & 0 \\ 0 & 0 & 0 \\ 0 & 0 & 0 \end{array}}
& O 
&  {\begin{array}{ccc} 0 & 0 & 0 \\ 0 & 0 & 0 \\ 0 & 0 & 0 \end{array}} \\ \hline 
{\begin{array}{ccc} 0 & 0 & 0 \\ 0 & 0 & 0 \\ 0 & 0 & 0 \end{array}} 
&  {\begin{array}{ccc} 1 & 0 & 0 \\ 0 & 1 & 0 \\ 0 & 0 & 1 \end{array}}
& O 
&  {\begin{array}{ccc} 0 & 0 & 0 \\ 0 & 0 & 0 \\ 0 & 0 & 0 \end{array}} \\ \hline
O & O & {\begin{array}{c} \\ I_{3g-9} \\  \\ \end{array}} & O \\ \hline
{\begin{array}{ccc} 0 & 0 & 0 \\ 0 & 0 & 0 \\ 0 & 0 & 0 \end{array}} 
&  {\begin{array}{ccc} 0 & 0 & 0 \\ 0 & 0 & 0 \\ 0 & 0 & 0 \end{array}}
& O 
&  {\begin{array}{ccc} 1 & 0 & 0 \\ 0 & 1 & 0 \\ 0 & 0 & 1 \end{array}}
\end{array}
\right).
\end{align*}
It is well-known that, if $M$ is a non-negative irreducible matrix and $N$ is a non-negative matrix 
such that $M \geq N$, then every eigenvalue $\sigma$ of $N$ satisfies 
$ \mathrm{PF}(M) \geq | \sigma |$
(see Theorem 9.1.1 of \cite{Sternberg}, for example). 
Therefore, $\lambda_n^g \geq n+1$.  
\end{proof}
\begin{figure}[hbt]
  \centering
\includegraphics[height=5cm]{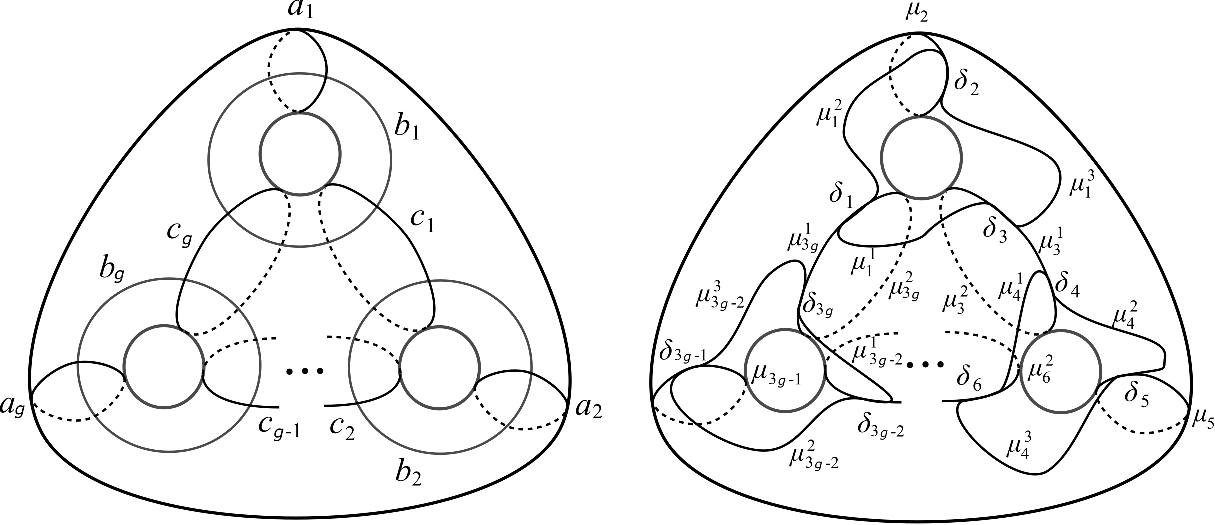}
\caption{The $x$-axis crosses this paper from the back side to this side.   LEFT: simple closed curves $a_i, b_i, c_i$ $(i=1,2, \ldots, g)$ 
RIGHT: a train track on $\Sigma_g$ carries simple closed curves 
$a_i, b_i, c_i$ $(i=1,2, \ldots, g)$} 
\label{fig:circle-track}
\end{figure}

\begin{lem}\label{ex:3}
Set $\rho = rt_{a_1}$ and $\rho' = rt_{c_1}$. Then, $\rho$ and $\rho'$ are reducible but not periodic in $\M$. 
\end{lem}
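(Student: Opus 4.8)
The plan is to compute the $g$-th powers $\rho^g$ and $(\rho')^g$ explicitly --- exactly as in the proof of Lemma~\ref{ex:2} --- and then read off both assertions. Using the identity $(rw)^g = \bigl(\prod_{i=1}^g r^i w r^{-i}\bigr) r^g$, valid for any $w\in\M$ by an immediate induction, together with $r^g=\mathrm{id}$ and $r(a_i)=a_{i+1}$ for $i<g$, $r(a_g)=a_1$, I obtain
\[
\rho^g = \prod_{i=1}^g r^i t_{a_1} r^{-i} = \prod_{i=1}^g t_{r^i(a_1)} = t_{a_2}t_{a_3}\cdots t_{a_g}t_{a_1},
\]
and the same computation with $t_{c_1}$ in place of $t_{a_1}$ gives $(\rho')^g = t_{c_2}t_{c_3}\cdots t_{c_g}t_{c_1}$. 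As already noted in the proof of Lemma~\ref{ex:2}, $\{a_1,\dots,a_g\}$ and $\{c_1,\dots,c_g\}$ are multicurves, so in each case the right-hand side is a product of commuting Dehn twists about a nonempty family of pairwise disjoint essential simple closed curves, i.e.\ a nontrivial multitwist.

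From this, non-periodicity is immediate: a nontrivial multitwist has infinite order in $\M$ (for instance $t_{a_1}\cdots t_{a_g}$ already acts with infinite order on $H_1(\Sigma_g;\mathbb{Z})$, and Dehn twists about disjoint, pairwise non-isotopic essential curves generate a free abelian group). Hence $\rho^g$ and $(\rho')^g$ have infinite order, so $\rho$ and $\rho'$ have infinite order and in particular are not periodic.

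For reducibility I would exhibit an invariant reduction system directly. Put $A=a_1\cup\cdots\cup a_g$. Since $a_1$ is disjoint from each $a_i$, the twist $t_{a_1}$ fixes every $a_i$, while $r$ cyclically permutes $a_1,\dots,a_g$; hence $\rho=rt_{a_1}$ maps the multicurve $A$ to itself, and as the $a_i$ are essential and pairwise disjoint this shows $\rho$ is reducible. The identical argument with $C=c_1\cup\cdots\cup c_g$ shows $\rho'$ is reducible. Equivalently, $\rho^g$ and $(\rho')^g$ are reducible of infinite order, hence neither periodic nor pseudo-Anosov; since a power of a periodic, resp.\ pseudo-Anosov, mapping class is again periodic, resp.\ pseudo-Anosov, the Nielsen--Thurston trichotomy forces $\rho$ and $\rho'$ themselves to be reducible.

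No step is really an obstacle: the content is the power computation, which is the same manipulation used in Lemma~\ref{ex:2}, and the rest is bookkeeping --- confirming from Figure~\ref{rotation} that the relevant curves are essential, pairwise disjoint, and cyclically permuted by $r$, and dispatching small genus (for $g=1$ one has $r=\mathrm{id}$, so $\rho=t_{a_1}$ and $\rho'=t_{c_1}$ are visibly reducible of infinite order). The only mild subtlety is to remember that in this paper ``reducible'' and ``periodic'' are \emph{not} mutually exclusive, so the statement has two genuinely independent parts, ``reducible'' and ``not periodic,'' both of which the above addresses.
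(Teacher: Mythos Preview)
Your proof is correct and follows essentially the same approach as the paper: compute $\rho^g$ (and $(\rho')^g$) via the identity $(rw)^g=\prod_{i=1}^g r^iwr^{-i}$ and $r^g=\mathrm{id}$, recognize the result as a nontrivial multitwist about the disjoint curves $a_1,\dots,a_g$ (resp.\ $c_1,\dots,c_g$), and read off both reducibility and infinite order. Your version is slightly more explicit in two places---you exhibit the invariant multicurve $A=a_1\cup\cdots\cup a_g$ directly for $\rho$ rather than only for $\rho^g$, and you spell out the Nielsen--Thurston lifting from $\rho^g$ to $\rho$---but this is added detail, not a different strategy.
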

\begin{proof}
We have 
\begin{align*}
(\rho)^g &= (rt_{a_1})^g 
= \left( \prod_{i=1}^g r^i t_{a_1} r^{-i} \right) \cdot r^g 
= \left( \prod_{i=1}^g t_{r^i(a_1)} \right)
= t_{a_2} t_{a_3} \cdots t_{a_g} t_{a_1},  
\end{align*}
and hence we see that for any integer $n$, $(\rho)^{gn} = t_{a_1}^n \cdots t_{a_g}^n$. 
This means that $\rho$ is a reducible and not a periodic element. 
Similar arguments apply to $\rho'$. 
\end{proof}

\begin{lem}\label{ex:4}
Set $\mathcal{R} = (t_{\alpha_1}^2 t_{\alpha_2} t_{\alpha_3} \cdots t_{\alpha_{2g-1}})^{g-1} t_{\alpha_{2g+1}}^{-1}$. 
Then, $\mathcal{R}$ is reducible but not periodic in $\H$. 
In particular, the relation $t_{\alpha_{2g+1}} = \mathcal{R}^{-(2g-1)}$ holds. 
\end{lem}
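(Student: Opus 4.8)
The plan is to establish the displayed relation $t_{\alpha_{2g+1}} = \mathcal{R}^{-(2g-1)}$ first and then read off from it that $\mathcal{R}$ is reducible but not periodic. Abbreviate $w = t_{\alpha_1}^{2} t_{\alpha_2} t_{\alpha_3} \cdots t_{\alpha_{2g-1}}$, so that $\mathcal{R} = w^{g-1} t_{\alpha_{2g+1}}^{-1}$. In the chain $\alpha_1, \dots, \alpha_{2g+1}$ the curve $\alpha_{2g+1}$ meets only $\alpha_{2g}$, hence is disjoint from $\alpha_1, \dots, \alpha_{2g-1}$; therefore $t_{\alpha_{2g+1}}$ commutes with $w$, and so
\[
\mathcal{R}^{2g-1} = w^{(g-1)(2g-1)}\, t_{\alpha_{2g+1}}^{-(2g-1)} = \bigl(w^{2g-1}\bigr)^{g-1}\, t_{\alpha_{2g+1}}^{-(2g-1)}.
\]
Thus the whole lemma reduces to the single relation $w^{2g-1} = t_{\alpha_{2g+1}}^{2}$: granting it, $\mathcal{R}^{2g-1} = t_{\alpha_{2g+1}}^{2(g-1)}\,t_{\alpha_{2g+1}}^{-(2g-1)} = t_{\alpha_{2g+1}}^{-1}$, which is the displayed relation. (For $g=1$ everything is vacuous, since $\mathcal{R} = t_{\alpha_3}^{-1}$ already.)

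To prove $w^{2g-1} = t_{\alpha_{2g+1}}^{2}$, put $\delta = t_{\alpha_1} t_{\alpha_2} \cdots t_{\alpha_{2g-1}}$, so $w = t_{\alpha_1}\delta$. Since consecutive curves among $\alpha_1, \dots, \alpha_{2g-1}$ meet once transversely while non-consecutive ones are disjoint, the assignment $\sigma_i \mapsto t_{\alpha_i}$ defines a homomorphism from the braid group $B_{2g}$ (generators $\sigma_1, \dots, \sigma_{2g-1}$) to $\M$, carrying $\sigma_1\cdots\sigma_{2g-1}$ to $\delta$ and $\sigma_1^{2}\sigma_2\cdots\sigma_{2g-1}$ to $w$. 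In $B_{2g}$ there is the classical identity $(\sigma_1^{2}\sigma_2\cdots\sigma_{2g-1})^{2g-1} = (\sigma_1\sigma_2\cdots\sigma_{2g-1})^{2g}$ (both sides equal the full twist): writing $\bar\delta = \sigma_1\cdots\sigma_{2g-1}$ and conjugating by powers of $\bar\delta$ (using $\bar\delta^{j}\sigma_1\bar\delta^{-j} = \sigma_{1+j}$ for $1\le j\le 2g-2$) one gets $(\sigma_1\bar\delta)^{2g-1} = \sigma_1(\bar\delta\sigma_1\bar\delta^{-1})\cdots(\bar\delta^{2g-2}\sigma_1\bar\delta^{-(2g-2)})\,\bar\delta^{2g-1} = (\sigma_1\sigma_2\cdots\sigma_{2g-1})\,\bar\delta^{2g-1} = \bar\delta^{2g}$. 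Applying the homomorphism yields $w^{2g-1} = \delta^{2g}$. Now $\alpha_1, \dots, \alpha_{2g-1}$ is a chain of odd length, so the $k$-chain relation (see \cite{FM}) gives $\delta^{2g} = t_{d_1}t_{d_2}$, where $d_1, d_2$ are the two boundary curves of a closed regular neighborhood $N$ of the chain. Inspecting Figure~\ref{humphries1} — or noting that $N$ has genus $g-1$ with $\chi(N) = 2-2g = \chi(\Sigma_g)$, so $\Sigma_g \setminus \mathrm{int}\,N$ is an annulus — one sees that $d_1$ and $d_2$ are both isotopic to $\alpha_{2g+1}$ in $\Sigma_g$. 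Hence $w^{2g-1} = t_{\alpha_{2g+1}}^{2}$, completing the proof of the relation.

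To finish: the twists $t_{\alpha_1}, \dots, t_{\alpha_{2g+1}}$ lie in $\H$ (\cite{BH}), so $\mathcal{R}\in\H$. If $\mathcal{R}$ were periodic, its power $\mathcal{R}^{2g-1} = t_{\alpha_{2g+1}}^{-1}$ would be periodic as well, contradicting that a Dehn twist has infinite order; hence $\mathcal{R}$ is not periodic. If $\mathcal{R}$ were pseudo-Anosov, then $\mathcal{R}^{2g-1} = t_{\alpha_{2g+1}}^{-1}$ would be pseudo-Anosov too, which is impossible since a Dehn twist is reducible. By the Nielsen--Thurston trichotomy $\mathcal{R}$ is therefore reducible, and it is not periodic.

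The step I expect to demand the most care is the last assertion of the second paragraph: verifying that the two boundary curves of a regular neighborhood of the sub-chain $\alpha_1, \dots, \alpha_{2g-1}$ really are both isotopic to $\alpha_{2g+1}$ in $\Sigma_g$ — so that the image of the full twist is $t_{\alpha_{2g+1}}^{2}$ rather than a twist about some other (possibly separating) curve — and reconciling this with the labelling conventions of Figure~\ref{humphries1}. The commutation relations, the exponent arithmetic, the braid identity, and the trichotomy argument are all routine.
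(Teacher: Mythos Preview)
Your proof is correct and follows essentially the same route as the paper: commute $t_{\alpha_{2g+1}}$ past $w$, invoke the chain relation $w^{2g-1}=t_{\alpha_{2g+1}}^{2}$, and read off the Nielsen--Thurston type of $\mathcal{R}$ from that of its power $t_{\alpha_{2g+1}}^{-1}$. The only difference is cosmetic: the paper simply cites the relation $(t_{\alpha_1}^{2}t_{\alpha_2}\cdots t_{\alpha_{2g-1}})^{2g-1}=t_{\alpha_{2g+1}}^{2}$ from \cite{FM}, whereas you re-derive it via the braid identity $(\sigma_1\bar\delta)^{2g-1}=\bar\delta^{2g}$ together with the standard odd-chain relation and the annulus argument identifying $d_1,d_2$ with $\alpha_{2g+1}$.
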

The element $\mathcal{R}^{-1}$ is called a \textit{root} of Dehn twist of degree $2g-1$ (see \cite{MS,MR,Monden}). 
\begin{proof}
Note that $\mathcal{R}$ is in $\H$ since $t_{\alpha_1},t_{\alpha_2},\ldots,t_{\alpha_{2g+1}}$ are generators of $\H$. 
It is well-known 
that the relation $(t_{\alpha_1}^2 t_{\alpha_2} t_{\alpha_3} \cdots t_{\alpha_{2g-1}})^{2g-1} = t_{\alpha_{2g+1}}^2$ holds in $\H$ (see Section 4.4.1 in \cite{FM}). 
This relation is called the \textit{chain relation}. 
Since $\alpha_{2g+1}$ is disjoint from $\alpha_1,\ldots,\alpha_{2g-1}$, 
by the chain relation, we have 
\[\mathcal{R}^{2g-1} =  (t_{\alpha_1}^2 t_{\alpha_2} t_{\alpha_3} \cdots t_{\alpha_{2g-1}})^{(g-1)(2g-1)} t_{\alpha_{2g+1}}^{-(2g-1)} = t_{\alpha_{2g+1}}^{2(g-1)} t_{\alpha_{2g+1}}^{-(2g-1)} = t_{\alpha_{2g+1}}^{-1}.\] 
Since a Dehn twist in $\H$ is reducible but not periodic in $\H$, $\mathcal{R}$ is also reducible but not periodic. 
\end{proof}

\section{Proofs of Theorems~\ref{thm:1} and~\ref{thm:50}}
In this section, we prove Theorems~\ref{thm:1} and ~\ref{thm:50} in the introduction. 
We will repeatedly use the fact that if a Dehn twist $t_c$ about a simple closed curve $c$ and an element $f$ are in a certain subgroup $G$ of $\M$, then $t_f(c)$ is also in $G$ by the relation $t_{f(c)} = f t_c f^{-1}$.
Recall that the notations $\alpha_1,\alpha_2,\ldots,\alpha_{2g+1}$ and $\beta$ denote the simple closed curves on $\Sigma_g$ as in Figure~\ref{humphries1}. 
Note that $\alpha_i$ (resp. $\beta$) intersects $\alpha_{i+1}$ (resp. $\alpha_4$) transversely at exactly one point for $i=1,2,\ldots,2g-1$. 
Moreover, $\alpha_i$ (resp. $\beta$) is disjoint from $\alpha_j$ if $|i-j|>1$ (resp. $j\neq 4$). 
In this section, we will use the following 
fact:
\begin{align*}
&t_{\alpha_{i+1}}^{-1}(\alpha_i) = t_{\alpha_i}(\alpha_{i+1}),& &\mathrm{and}& &t_{\alpha_{i+1}}(\alpha_i) = t_{\alpha_i}^{-1}(\alpha_{i+1}).& 
\end{align*}
We denote by $\gamma_{i}$ the simple closed curve $t_{\alpha_{i}}(\alpha_{i+1})$ for $i=4,5,\ldots,2g-1$.

Theorem~\ref{thm:1} follows from Theorems~\ref{thm:5} and ~\ref{thm:6} below. 
\begin{thm}\label{thm:5}
Let $H$ be the subgroup of $\H$ generated by $h_1$ and $h_2$ defined in Lemma~\ref{ex:1}. 
Then, $H=\H$ for $g\geq 1$.  
\end{thm}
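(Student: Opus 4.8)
The plan is to show that the subgroup $H = \langle h_1, h_2\rangle$ contains all of the Dehn twist generators $t_{\alpha_1}, t_{\alpha_2}, \ldots, t_{\alpha_{2g+1}}$ of $\H$, from which $H = \H$ follows immediately by the Birman--Hilden generating set. The key observation is that $h_1$ and $h_2$ differ in a controlled way: examining the definitions, $h_1 = t_{\alpha_1} t_{\alpha_2}^{-1} h_3$ and $h_2 = t_{\alpha_2}^{-1} h_3$ where $h_3 = t_{\alpha_3}t_{\alpha_4}^{-1}\cdots t_{\alpha_{2g-1}}t_{\alpha_{2g}}^{-1} t_{\alpha_{2g+1}}$, and more generally $h_{2i-1} = t_{\alpha_{2i-1}} h_{2i}$ and $h_{2i} = t_{\alpha_{2i}}^{-1} h_{2i+1}$. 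So the product $h_1 h_2^{-1} = t_{\alpha_1} t_{\alpha_2}^{-1} h_3 h_3^{-1} t_{\alpha_2} = t_{\alpha_1} t_{\alpha_2}^{-1} t_{\alpha_2} = t_{\alpha_1}$? That is too good to be literally true since $h_3$ appears on the left in one and would need to commute; instead the correct manipulation is $h_2^{-1} h_1 = h_3^{-1} t_{\alpha_2} \cdot t_{\alpha_1} t_{\alpha_2}^{-1} h_3$, a conjugate of $t_{\alpha_2} t_{\alpha_1} t_{\alpha_2}^{-1} = t_{t_{\alpha_2}(\alpha_1)}$. So $H$ contains a Dehn twist about a specific curve.

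First I would record the recursive relations $h_{2i-1} = t_{\alpha_{2i-1}} h_{2i}$, $h_{2i} = t_{\alpha_{2i}}^{-1} h_{2i+1}$, and $h_{2g+1} = t_{\alpha_{2g+1}}$, so that each $h_k$ is a known word in Dehn twists. Then I would extract one honest Dehn twist from $h_1$ and $h_2$: compute $h_2^{-1} h_1$ and simplify using the $h_3$ cancellation to identify it as a conjugate of $t_{\alpha_2}t_{\alpha_1}t_{\alpha_2}^{-1}$ (or similar), hence $H$ contains $t_{c}$ for an explicit curve $c$ obtained by pushing $\alpha_1$ by $h_3$. Next, using that $H$ now contains both this twist and $h_1$ (hence $h_3 = $ an appropriate product), I would try to peel off the generators one at a time: knowing $t_c$ and $h_1$ lets us recover $t_{\alpha_1} t_{\alpha_2}^{-1} h_3$, and then I would aim to show inductively that $H$ contains $h_{2i+1}$ and $t_{\alpha_{2i-1}}$ (or $t_{\alpha_{2i}}$) for increasing $i$, peeling from the front of the word. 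A cleaner route may be to conjugate the extracted twist by powers of $h_1$ and $h_2$ to move it around the surface and generate more twists via the change-of-coordinates and braid relations listed in the Preliminaries.

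The second main ingredient I would use is the standard fact (chain-type / two-twist arguments) that once $H$ contains $t_{\alpha_i}$ for one $i$ together with an element mapping $\alpha_i$ appropriately, repeated application of $t_{f(a)} = f t_a f^{-1}$ walks us along the chain $\alpha_1, \ldots, \alpha_{2g+1}$. Concretely, if I can show $t_{\alpha_{2g+1}} \in H$ (the tail of every $h_k$), then conjugating by $h_{2g}= t_{\alpha_{2g}}^{-1} t_{\alpha_{2g+1}}$ and using the braid relation between $\alpha_{2g}$ and $\alpha_{2g+1}$ should yield $t_{\alpha_{2g}}$, then $t_{\alpha_{2g-1}}$, and so on down the chain, giving all $2g+1$ generators.

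The hard part, I expect, is the bookkeeping in the first step: verifying that the telescoping cancellation in $h_2^{-1}h_1$ (and its iterates) really does isolate a single Dehn twist rather than a messier product, and then organizing the induction so that at stage $i$ we genuinely have enough elements in $H$ to produce the next generator without circularity. The intersection pattern of $\beta$ with the $\alpha_i$ is irrelevant here (since $h_1, h_2 \in \H$ involve only the $\alpha_i$), so the combinatorics is one-dimensional along the chain, which is encouraging; the risk is that the front-peeling and tail-peeling have to be reconciled, and getting a clean curve $c$ with $t_c \in H$ whose orbit under $\langle h_1, h_2\rangle$ visibly fills out the chain is where the real work lies.
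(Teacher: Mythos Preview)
Your overall strategy---peel off the Dehn twists $t_{\alpha_i}$ one at a time using the recursions $h_{2i-1}=t_{\alpha_{2i-1}}h_{2i}$ and $h_{2i}=t_{\alpha_{2i}}^{-1}h_{2i+1}$ together with the conjugation rule $t_{f(a)}=ft_af^{-1}$---is exactly the paper's approach. But you talked yourself out of the key first step. Your computation
\[
h_1 h_2^{-1} = (t_{\alpha_1}t_{\alpha_2}^{-1}h_3)(t_{\alpha_2}^{-1}h_3)^{-1} = t_{\alpha_1}t_{\alpha_2}^{-1}h_3 h_3^{-1} t_{\alpha_2} = t_{\alpha_1}
\]
is \emph{literally correct}: $h_3$ and $h_3^{-1}$ are adjacent and cancel, no commuting is required. (Equivalently, $h_1=t_{\alpha_1}h_2$ directly from the definitions, so $h_1h_2^{-1}=t_{\alpha_1}$.) This is precisely how the paper begins.

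Once $t_{\alpha_1}\in H$, the paper's peeling is short and clean: conjugate $t_{\alpha_1}$ by $h_2$ and note $h_2(\alpha_1)=t_{\alpha_2}^{-1}(\alpha_1)=t_{\alpha_1}(\alpha_2)$ (the $\alpha_j$ with $j\geq 3$ are disjoint from $\alpha_1$), so $t_{t_{\alpha_1}(\alpha_2)}\in H$ and hence $t_{\alpha_2}\in H$. Then $h_3=t_{\alpha_2}h_2\in H$; conjugating $t_{\alpha_2}$ by $h_3$ and using $h_3(\alpha_2)=t_{\alpha_3}(\alpha_2)=t_{\alpha_2}^{-1}(\alpha_3)$ gives $t_{\alpha_3}\in H$. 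Iterating yields all $t_{\alpha_1},\ldots,t_{\alpha_{2g+1}}$. Your detour through $h_2^{-1}h_1$ and curves of the form $h_3^{-1}t_{\alpha_2}(\alpha_1)$ would eventually work, but it introduces exactly the bookkeeping you were worried about; the direct route avoids it entirely.
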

\begin{proof}
We see that $t_{\alpha_1} = h_1 h_2^{-1}$ is in $H$, 
and hence 
$t_{h_2(\alpha_1)}$ is in $H$. 
Since 
$h_2(\alpha_1) = t_{\alpha_2}^{-1}(\alpha_1) = t_{\alpha_1}(\alpha_2)$, 
we obtain 
$t_{h_2(\alpha_1)} = t_{t_{\alpha_1}(\alpha_2)}$ 
and therefore, $t_{\alpha_2}$ is in $H$. 
We set $h_3 = t_{\alpha_2} h_2$, then $h_3$ is in $H$, and hence $t_{h_3(\alpha_2)}$ is in $H$. 
By $h_3(\alpha_2) = t_{\alpha_3}(\alpha_2) = t_{\alpha_2}^{-1}(\alpha_3)$, 
we have 
$t_{h_3(\alpha_2)} = t_{t_{\alpha_2}^{-1}(\alpha_3)}$, 
and therefore $t_{\alpha_3}$ is in $H$. 
Repeating this argument, we can show that $t_{\alpha_1},\ldots,t_{\alpha_{2g+1}}$ are contained in $H$. 
Since $\H$ is generated by these Dehn twists, we see that $H=\H$, and the proof is complete. 
\end{proof}

\begin{thm}\label{thm:6}
Let $G$ be the subgroup of $\M$ generated by $f_1$ and $f_2$ defined in Lemma~\ref{ex:1}. 
Then, $G = \M$ for $g\geq 3$. 
\end{thm}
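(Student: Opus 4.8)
The plan is to prove that $G=\langle f_1,f_2\rangle$ contains all of Humphries' generators $t_{\alpha_1},\dots,t_{\alpha_{2g}}$ together with $t_\beta$, and then to invoke Humphries' theorem to conclude $G=\M$. The argument will follow the proof of Theorem~\ref{thm:5} closely, the only genuinely new ingredient being the curve $\beta$, which is disjoint from every $\alpha_i$ except $\alpha_4$.

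First I would recover $t_{\alpha_1}$. Since $\beta$ is disjoint from $\alpha_1$, the common subwords of $f_1$ and $f_2$ cancel, giving
\[
f_1f_2^{-1}=t_\beta\,t_{\alpha_1}\,t_\beta^{-1}=t_{\alpha_1},
\]
so $t_{\alpha_1}\in G$. I would then run the ``conjugate and peel'' loop from the proof of Theorem~\ref{thm:5}. Conjugating $t_{\alpha_1}$ by $f_2$ produces $t_{\alpha_2}\in G$, because $f_2(\alpha_1)=t_{\alpha_1}(\alpha_2)$ (the leading $t_\beta$ fixes this curve) and $t_{\alpha_1}\in G$ clears the conjugating factor. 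Then $f_3:=t_{\alpha_2}f_2=t_\beta\,t_{\alpha_3}t_{\alpha_4}^{-1}t_{\alpha_5}\cdots t_{\alpha_{2g}}^{-1}\in G$ (since $t_{\alpha_2}$ commutes with $t_\beta$), and conjugating $t_{\alpha_2}$ by $f_3$ gives $t_{\alpha_3}\in G$. Through $\alpha_3$ the computation is word-for-word the one in Theorem~\ref{thm:5}, precisely because $\beta$ misses $\alpha_1,\alpha_2,\alpha_3$.

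Next I would try to cross $\alpha_4$. Setting $f_4:=t_{\alpha_3}^{-1}f_3=t_\beta\,t_{\alpha_4}^{-1}t_{\alpha_5}\cdots t_{\alpha_{2g}}^{-1}\in G$ (this is the element $f_4$ of Lemma~\ref{ex:1}), conjugation of $t_{\alpha_3}$ by $f_4$, after clearing $t_{\alpha_3}$'s, yields $t_{t_\beta(\alpha_4)}\in G$ — not $t_{\alpha_4}$ itself, since now the leading $t_\beta$ moves the image curve. The point is the identity $t_\beta\,t_{\alpha_4}^{-1}=t_{t_\beta(\alpha_4)}^{-1}\,t_\beta$, which, multiplying $f_4$ on the left by $t_{t_\beta(\alpha_4)}\in G$, gives
\[
f_5:=t_\beta\,t_{\alpha_5}t_{\alpha_6}^{-1}\cdots t_{\alpha_{2g}}^{-1}\in G,
\]
again the element $f_5$ of Lemma~\ref{ex:1}, and in $f_5$ the twist $t_\beta$ commutes with \emph{all} of $t_{\alpha_5},\dots,t_{\alpha_{2g}}$. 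From here I would like $\beta$ to cause no further trouble and to resume the loop along $\alpha_5,\dots,\alpha_{2g}$, obtaining $t_{\alpha_5},\dots,t_{\alpha_{2g}}\in G$ (using the curves $\gamma_i=t_{\alpha_i}(\alpha_{i+1})$, which all miss $\beta$); then peeling $f_5$ down to its last factor leaves $t_\beta t_{\alpha_{2g}}^{-1}\in G$, hence $t_\beta\in G$ and finally $t_{\alpha_4}=t_\beta^{-1}t_{t_\beta(\alpha_4)}t_\beta\in G$. Humphries' theorem then finishes the proof for $g\ge 3$.

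The hard part will be making the restart at $\alpha_5$ actually go through: to begin the loop on the subchain $\alpha_5,\dots,\alpha_{2g}$ one first needs $t_{\alpha_5}$ (equivalently $t_\beta$, or $t_{\alpha_4}$), and the obvious conjugation of $t_{\alpha_4}$ by $f_5$ is not available because $t_{\alpha_4}\notin G$ yet; moreover every twist conjugation produces out of $f_4$ or $f_5$ has the form ``$t_\beta(\text{chain curve})$ conjugated'', so some input not of pure conjugation type is required to break this pattern. Resolving this — likely by a careful combination of the twists $t_{t_\beta(\alpha_4)}$, $t_{\alpha_3}$ and conjugates of $f_5$, and quite possibly the lantern relation supported on the four-holed sphere carried by $\beta$ and the nearby chain curves — is the step I expect to be the crux (and it may also force separate attention to the smallest case $g=3$, where the tail of $f_4$ has overlaps among consecutive $\alpha_i$ that need extra bookkeeping).
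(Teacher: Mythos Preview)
Your outline coincides with the paper's argument (and its Lemma~\ref{lem:5}) through the point where you obtain $t_{\alpha_1},t_{\alpha_2},t_{\alpha_3}$, the twist $t_{\delta_4}$ about $\delta_4=t_\beta(\alpha_4)$, and $f_5\in G$. But you stop exactly at the step you yourself flag as the crux, and your proposed route out---the lantern relation on a four-holed sphere near $\beta$---is a dead end; the paper uses no lantern relation at all in this proof.

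The missing idea is a single conjugation that escapes the ``$t_\beta$-twisted'' curves. One computes, using only braid relations, that
\[
t_{\delta_4}\,f_5(\delta_4)=t_{\alpha_4}(\alpha_5)=\gamma_4,
\]
so $t_{\gamma_4}\in G$; and then $f_3(\gamma_4)=\gamma_5$, so $t_{\gamma_5}\in G$. From here the paper does \emph{not} try to recover $t_{\alpha_5}$ and resume your loop along the chain. Instead it runs an induction entirely in terms of the $\gamma_i$'s and the auxiliary elements $F_{2k-1}=t_{\alpha_3}t_{\alpha_5}\cdots t_{\alpha_{2k-3}}f_{2k-1}$ (Lemma~\ref{lem:6}): from $t_{\gamma_{2i-1}},F_{2i-1},F_{2i+1}\in G$ one gets $t_{\gamma_{2i}},t_{\gamma_{2i+1}}$ and $F_{2i+3}$, and at the top end $t_{\alpha_{2g}}$. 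Only then does one back-propagate via $\gamma_j=t_{\alpha_{j+1}}^{-1}(\alpha_j)$ to obtain $t_{\alpha_{2g-1}},\dots,t_{\alpha_4}$, and finally strip $f_1$ to get $t_\beta$. The case $g=3$ is handled separately, but again via $t_{\gamma_5}$ and the identity $f_5=t_{\gamma_5}^{-1}\,t_\beta t_{\alpha_5}$, not via a lantern. So your instinct that $\gamma_i$'s are relevant is right, but the concrete mechanism---the $\delta_4\mapsto\gamma_4$ computation and the $F_{2k-1}$ bookkeeping---is what your proposal is lacking.
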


Lemmas~\ref{lem:5} and~\ref{lem:6} below are used to show Theorem~\ref{thm:6}. 
\begin{lem}\label{lem:5}
For $g\geq 3$, the elements $t_{\alpha_1}, t_{\alpha_2}, t_{\alpha_3},  
t_{\gamma_{4}}, t_{\gamma_{5}}, 
f_3, f_4$ and $f_5$ are contained in $G$. 
\end{lem}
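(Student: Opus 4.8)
The plan is to bootstrap everything out of the single pair $\{f_1,f_2\}$: produce the Dehn twists about $\alpha_1,\alpha_2,\alpha_3$ one at a time, peeling the outer factors off the $f_i$'s as I go, and then deal separately with the one place where $\beta$ interferes.

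Two families of identities drive the argument. Since $\beta$ is disjoint from every $\alpha_k$ with $k\neq 4$, one has the factorizations $f_1=t_{\alpha_1}f_2$, $f_2=t_{\alpha_2}^{-1}f_3$ and $f_3=t_{\alpha_3}f_4$ (in each case one pulls the leftmost twist $t_{\alpha_k}^{\pm1}$ through $t_\beta$ and cancels the common tail); in particular $t_{\alpha_1}=f_1f_2^{-1}\in G$. The ``climbing'' step is this: if $t_{\alpha_k}\in G$ for $k=1$ or $2$, conjugate it by $f_{k+1}\in G$; the twists standing to the right of $t_{\alpha_{k+1}}^{\pm1}$ inside $f_{k+1}$ fix $\alpha_k$ (their curves are disjoint from $\alpha_k$), then $t_{\alpha_{k+1}}^{\mp1}(\alpha_k)=t_{\alpha_k}^{\pm1}(\alpha_{k+1})$, and finally $t_\beta$ fixes $t_{\alpha_k}^{\pm1}(\alpha_{k+1})$ because a regular neighbourhood of $\alpha_k\cup\alpha_{k+1}$ misses $\beta$ for $k\le 2$. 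Hence $f_{k+1}t_{\alpha_k}f_{k+1}^{-1}=t_{t_{\alpha_k}^{\pm1}(\alpha_{k+1})}=t_{\alpha_k}^{\pm1}t_{\alpha_{k+1}}t_{\alpha_k}^{\mp1}$, so $t_{\alpha_{k+1}}\in G$. Interleaving the two families: from $t_{\alpha_1}\in G$ get $t_{\alpha_2}\in G$; then $f_3=t_{\alpha_2}f_2\in G$; then the climbing step with $k=2$ gives $t_{\alpha_3}\in G$; then $f_4=t_{\alpha_3}^{-1}f_3\in G$.

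The step I expect to be the main obstacle is passing the index $4$, where $\beta$ meets $\alpha_4$ and the clean factorization breaks, and then producing $t_{\gamma_4}$ and $t_{\gamma_5}$. Because $t_\beta$ no longer commutes with $t_{\alpha_4}^{-1}$, one gets instead $f_4=t_{t_\beta(\alpha_4)}^{-1}f_5$. To obtain the twist $t_{t_\beta(\alpha_4)}$ I would conjugate $t_{\alpha_3}\in G$ by $f_4\in G$: the twists $t_{\alpha_j}$ ($j\ge 5$) in $f_4$ fix $\alpha_3$, then $t_{\alpha_4}^{-1}(\alpha_3)=t_{\alpha_3}(\alpha_4)$, and $t_\beta$ commutes with $t_{\alpha_3}$, so $f_4(\alpha_3)=t_{\alpha_3}\bigl(t_\beta(\alpha_4)\bigr)$; therefore $t_{t_\beta(\alpha_4)}=t_{\alpha_3}^{-1}\bigl(f_4t_{\alpha_3}f_4^{-1}\bigr)t_{\alpha_3}\in G$, and hence $f_5=t_{t_\beta(\alpha_4)}f_4\in G$.

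It remains to extract $t_{\gamma_4}$ and $t_{\gamma_5}$, where $\gamma_4=t_{\alpha_4}(\alpha_5)$ and $\gamma_5=t_{\alpha_5}(\alpha_6)$. The natural starting point is the identity $\gamma_4=t_{\alpha_5}^{-1}(\alpha_4)$, equivalently $t_{\alpha_5}^{-1}t_{\alpha_4}^{-1}t_{\alpha_5}=t_{\gamma_4}^{-1}$: substituting this inside $f_4$ and comparing with $f_5$ shows that $f_5^{-1}f_4$ is a twist about a conjugate of $\gamma_4$ by Dehn twists about $\alpha_6,\alpha_7,\dots$ (the common tail of $f_4$ and $f_5$), and the analogous manipulation with $\gamma_5=t_{\alpha_6}^{-1}(\alpha_5)$ treats one index further along. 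The delicate part — the real crux of the lemma — is to strip off that residual conjugation: one must conjugate by suitable combinations of $f_4$, $f_5$ and the already-available twists $t_{\alpha_1},t_{\alpha_2},t_{\alpha_3},t_{t_\beta(\alpha_4)}$ so that the extra tail-twists cancel and one is left with $t_{\gamma_4}$ (resp.\ $t_{\gamma_5}$) exactly, checking at each stage that the intermediate curves have the disjointness the cancellation needs and that the exceptional position of $\beta$ at $\alpha_4$ does not re-enter. (If a direct cancellation turns out to be awkward, a lantern relation among the relevant curves should do the job instead.) Everything outside this last paragraph is the routine repetition described above, so this is where I would concentrate the effort.
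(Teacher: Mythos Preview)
Your argument through $f_5\in G$ is correct and is essentially the paper's argument: you obtain $t_{\alpha_1},t_{\alpha_2},t_{\alpha_3}$ and $f_3,f_4$ by the same peeling--and--conjugation procedure, and you identify the curve $\delta_4:=t_\beta(\alpha_4)$ and the factorization $f_4=t_{\delta_4}^{-1}f_5$ exactly as in the paper.

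The gap is your last paragraph. You do not actually produce $t_{\gamma_4}$ or $t_{\gamma_5}$; you only sketch a plan and concede that ``stripping off the residual conjugation'' is the crux. Two concrete difficulties with your sketch: first, your computation of $f_5^{-1}f_4$ gives a twist about $T^{-1}(\alpha_4)$ with $T=t_{\alpha_5}t_{\alpha_6}^{-1}\cdots$, and after the first step $t_{\alpha_5}^{-1}(\alpha_4)=\gamma_4$ the curve genuinely meets $\alpha_6$, so the tail does \emph{not} act trivially and you have no mechanism in $G$ yet to undo it (you do not have $t_{\alpha_5},t_{\alpha_6},\dots$ individually). Second, your ``analogous manipulation'' for $\gamma_5$ implicitly requires splitting $f_5=t_{\alpha_5}f_6$, but $t_{\alpha_5}\notin G$ at this stage, so $f_6$ is not available. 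The lantern fallback is not justified either.

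The paper avoids both problems by conjugating in the \emph{forward} direction and by choosing the right starting curve. For $\gamma_4$ one conjugates $t_{\delta_4}$ by $f_5$: since $\delta_4$ lies in a neighbourhood of $\beta\cup\alpha_4$, every tail twist $t_{\alpha_6}^{\pm1},\dots,t_{\alpha_{2g}}^{\pm1}$ fixes it, so $f_5(\delta_4)=t_\beta t_{\alpha_5}(\delta_4)$; one further conjugation by $t_{\delta_4}$ and the braid relation $t_{\alpha_4}t_\beta t_{\alpha_4}=t_\beta t_{\alpha_4}t_\beta$ collapse this to $t_{\delta_4}f_5(\delta_4)=t_{\alpha_4}(\alpha_5)=\gamma_4$. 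For $\gamma_5$ the paper uses the element $f_3$ (already in $G$) rather than anything past $f_5$: a short braid computation gives $f_3(\gamma_4)=\gamma_5$, because the only tail twist that moves $\gamma_4$ is $t_{\alpha_6}^{-1}$, and it combines with $t_{\alpha_4}^{-1}t_{\alpha_5}$ via $t_{\alpha_5}t_{\alpha_4}t_{\alpha_5}=t_{\alpha_4}t_{\alpha_5}t_{\alpha_4}$ to yield $t_{\alpha_5}(\alpha_6)$. These two explicit curve computations are what your proposal is missing.
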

\begin{proof}
By $t_{\alpha_1} = t_\beta t_{\alpha_1} t_\beta^{-1} = f_1f_2^{-1}$, $t_{\alpha_1}$ is in $G$, and hence $t_{\alpha_1}^{-1}f_2$ is also in $G$. 
Since we find that $f_2(\alpha_1) = t_{\alpha_2}^{-1}(\alpha_1) = t_{\alpha_1}(\alpha_2)$, we have $t_{\alpha_1}^{-1} f_2 (\alpha_1) = \alpha_2$. 
Therefore, 
$t_{\alpha_2}$ is in $G$. 
By $f_3 = t_{\alpha_2} f_2$, we see that $f_3$ is in $G$. 
Since we find that $f_3(\alpha_2) = t_{\alpha_3}(\alpha_2) = t_{\alpha_2}^{-1}(\alpha_3)$, we have $t_{\alpha_2} f_3 (\alpha_2) = \alpha_3$, and therefore we see that $t_{\alpha_3}$ is in $G$. 
From this, $f_4 = t_{\alpha_3}^{-1} f_3$ is in $G$. 
Note that we find that $f_4(\alpha_3) = t_\beta t_{\alpha_4}^{-1}(\alpha_3) = t_\beta t_{\alpha_3}(\alpha_4) = t_{\alpha_3} t_\beta (\alpha_4)$. 
This gives $t_{\alpha_3}^{-1} f_4 (\alpha_3) = t_\beta(\alpha_4)$, and hence 
$t_{\delta_4}$ is in $G$, 
where $\delta_4$ is the simple closed curve $t_\beta(\alpha_4)$.  
Furthermore, the following holds:
\begin{align*} 
f_4 &= t_\beta t_{\alpha_4}^{-1} \cdot t_{\alpha_5} t_{\alpha_6}^{-1} t_{\alpha_7} t_{\alpha_8}^{-1} \cdots t_{\alpha_{2g-1}} t_{\alpha_{2g}}^{-1} \\
&= t_\beta t_{\alpha_4}^{-1} t_{\beta}^{-1} \cdot t_\beta t_{\alpha_5} t_{\alpha_6}^{-1} t_{\alpha_7} t_{\alpha_8}^{-1} \cdots t_{\alpha_{2g-1}} t_{\alpha_{2g}}^{-1} \\
&= t_{\delta_4}^{-1} f_5.   
\end{align*}
This means that $f_5$ is in $G$. 
Since $\beta$ and $\alpha_4$ are disjont from $\alpha_6,\ldots,\alpha_{2g}$, we obtain 
$f_5(
\delta_4 
) = t_\beta t_{\alpha_5}(
\delta_4
)$.  
This gives
\begin{align*}
t_{\delta_4} f_5(\delta_4) 
& = t_\beta t_{\alpha_4} t_\beta^{-1} t_\beta t_{\alpha_5} t_\beta(\alpha_4) \\
& = t_\beta t_{\alpha_4} t_\beta t_{\alpha_5} (\alpha_4) \\
&= t_{\alpha_4} t_\beta t_{\alpha_4} t_{\alpha_4}^{-1} (\alpha_5) \\
&= t_{\alpha_4}(\alpha_5) (= \gamma_{4}). 
\end{align*}
Therefore, $t_{\gamma_{4}}$ is in $G$.  
Moreover, we have 
\begin{align*}
f_3(\gamma_{4}) &= t_\beta t_{\alpha_3} t_{\alpha_4}^{-1} t_{\alpha_5} t_{\alpha_6}^{-1} t_{\alpha_4}(\alpha_5) \\
&= t_\beta t_{\alpha_3} t_{\alpha_4}^{-1} t_{\alpha_5} t_{\alpha_4} t_{\alpha_6}^{-1}(\alpha_5) \\
&= t_\beta t_{\alpha_3} t_{\alpha_4}^{-1} t_{\alpha_5} t_{\alpha_4} t_{\alpha_5}(\alpha_6) \\
&= t_\beta t_{\alpha_3} t_{\alpha_4}^{-1} t_{\alpha_4} t_{\alpha_5} t_{\alpha_4}(\alpha_6) \\
&= t_\beta t_{\alpha_3} t_{\alpha_5}(\alpha_6) \\
&= t_{\alpha_5}(\alpha_6) (= \gamma_{5}). 
\end{align*}
Therefore, $t_{\gamma_{5}}$ is in $G$, and the proof is complete.
\end{proof}

\begin{lem}\label{lem:6}
Let $g\geq 4$, and set $F_{2k-1} = t_{\alpha_3} t_{\alpha_5} \cdots t_{\alpha_{2k-3}} f_{2k-1}$ for $3\leq k \leq g$. 
We assume that the elements $
t_{\gamma_{2i-1}} 
, F_{2i-1}, F_{2i+1}$ are contained in $G$ for some $i$ such that $3\leq i \leq g-1$, then the following holds:  
\begin{enumerate}
\item[(1)] 
$t_{\gamma_{2i}}$ and $t_{\gamma_{2i+1}}$ are in $G$ 
if $3\leq i \leq g-1$, 
\item[(2)] 
$F_{2i+3}$ is in $G$ 
if $i \leq g-2$, and 
\item[(3)] 
$t_{\alpha_{2g}}$ is in $G$ 
if $i=g-1$. 
\end{enumerate}
\end{lem}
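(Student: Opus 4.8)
The plan is to derive all three parts from one algebraic identity relating consecutive $F$'s, namely
\[
F_{2j+1}=t_{\gamma_{2j-1}}\,F_{2j-1}\qquad(3\le j\le g-1),
\]
together with two short ``curve--pushing'' computations. For the identity, comparing the definitions gives $f_{2j-1}=\big(t_\beta\, t_{\alpha_{2j-1}}t_{\alpha_{2j}}^{-1}\,t_\beta^{-1}\big)f_{2j+1}$, and since $j\ge 3$ forces $\beta$ to be disjoint from $\alpha_{2j-1}$ and $\alpha_{2j}$, this becomes $f_{2j-1}=t_{\alpha_{2j-1}}t_{\alpha_{2j}}^{-1}f_{2j+1}$. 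Writing $t_{\alpha_{2j-1}}t_{\alpha_{2j}}^{-1}=\big(t_{\alpha_{2j-1}}t_{\alpha_{2j}}^{-1}t_{\alpha_{2j-1}}^{-1}\big)t_{\alpha_{2j-1}}=t_{\gamma_{2j-1}}^{-1}t_{\alpha_{2j-1}}$ and commuting $t_{\gamma_{2j-1}}^{-1}$ past the prefix $t_{\alpha_3}t_{\alpha_5}\cdots t_{\alpha_{2j-3}}$ (which is disjoint from $\gamma_{2j-1}$) gives $F_{2j-1}=t_{\gamma_{2j-1}}^{-1}F_{2j+1}$. Granting part~(1), part~(2) is then immediate: $t_{\gamma_{2i+1}}\in G$, so the identity with $j=i+1$ yields $F_{2i+3}=t_{\gamma_{2i+1}}F_{2i+1}\in G$, and this is precisely where $i\le g-2$ is needed for $F_{2i+3}$ to be defined.

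For part~(1) I would exhibit elements of $G$ sending $\gamma_{2i-1}$ to $\gamma_{2i}$ and $\gamma_{2i}$ to $\gamma_{2i+1}$, so that the rule $t_{f(c)}=ft_cf^{-1}$ finishes. Pushing $\gamma_{2i-1}=t_{\alpha_{2i-1}}(\alpha_{2i})$ through $F_{2i+1}$: the tail $t_{\alpha_{2i+3}}t_{\alpha_{2i+4}}^{-1}\cdots$ and $t_{\alpha_{2i+2}}^{-1}$ fix it; $t_{\alpha_{2i+1}}$ carries it to $t_{\alpha_{2i-1}}t_{\alpha_{2i}}^{-1}(\alpha_{2i+1})$ (using that $\alpha_{2i+1}$ is disjoint from $\alpha_{2i-1}$ and that $t_{\alpha_{2i+1}}(\alpha_{2i})=t_{\alpha_{2i}}^{-1}(\alpha_{2i+1})$); $t_\beta$ fixes it; and of the prefix only the last factor $t_{\alpha_{2i-1}}$ acts nontrivially (all the curves $\alpha_3,\dots,\alpha_{2i-3}$ being disjoint from what is being moved), so $F_{2i+1}(\gamma_{2i-1})=t_{\alpha_{2i-1}}^{2}t_{\alpha_{2i}}^{-1}(\alpha_{2i+1})$. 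Applying $t_{\gamma_{2i-1}}=t_{\alpha_{2i-1}}t_{\alpha_{2i}}t_{\alpha_{2i-1}}^{-1}$ and collapsing by the braid relation,
\[
t_{\gamma_{2i-1}}\,t_{\alpha_{2i-1}}^{2}\,t_{\alpha_{2i}}^{-1}=t_{\alpha_{2i-1}}t_{\alpha_{2i}}t_{\alpha_{2i-1}}t_{\alpha_{2i}}^{-1}=t_{\alpha_{2i}}t_{\alpha_{2i-1}},
\]
so $(t_{\gamma_{2i-1}}F_{2i+1})(\gamma_{2i-1})=t_{\alpha_{2i}}(\alpha_{2i+1})=\gamma_{2i}$, and since $t_{\gamma_{2i-1}}$ and $t_{\gamma_{2i-1}}F_{2i+1}$ lie in $G$ this gives $t_{\gamma_{2i}}\in G$. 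An entirely parallel trace shows $F_{2i-1}(\gamma_{2i})=\gamma_{2i+1}$: now $t_{\alpha_{2i+2}}^{-1}$ moves $\gamma_{2i}$ to $t_{\alpha_{2i}}(\gamma_{2i+1})$, $t_{\alpha_{2i+1}}$ fixes this by braiding, the factor $t_{\alpha_{2i}}^{-1}$ in $f_{2i-1}$ removes the leftover twist, and $t_{\alpha_{2i-1}}$, $t_\beta$ and the prefix all fix $\gamma_{2i+1}$ by disjointness; hence $t_{\gamma_{2i+1}}=F_{2i-1}t_{\gamma_{2i}}F_{2i-1}^{-1}\in G$.

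For part~(3) set $i=g-1$, so the tail is empty and $f_{2g-1}=t_\beta t_{\alpha_{2g-1}}t_{\alpha_{2g}}^{-1}$. Using $f_{2i-1}=t_{\alpha_{2i-1}}t_{\alpha_{2i}}^{-1}f_{2i+1}$ again one computes $F_{2i-1}^{-1}F_{2i+1}=f_{2i+1}^{-1}t_{\alpha_{2i}}f_{2i+1}=t_{f_{2i+1}^{-1}(\alpha_{2i})}$; for $i=g-1$ a short check gives $f_{2g-1}^{-1}(\alpha_{2g-2})=t_{\alpha_{2g}}(\gamma_{2g-2})$, so $t_{\alpha_{2g}}t_{\gamma_{2g-2}}t_{\alpha_{2g}}^{-1}\in G$. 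As $\gamma_{2g-2}$ and $\alpha_{2g}$ meet once, the braid relation rewrites the left side as $t_{\gamma_{2g-2}}^{-1}t_{\alpha_{2g}}t_{\gamma_{2g-2}}$, and conjugating by $t_{\gamma_{2g-2}}\in G$ (supplied by part~(1)) gives $t_{\alpha_{2g}}\in G$.

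The only genuinely delicate point is the curve bookkeeping in part~(1) and in the companion computation $F_{2i-1}^{-1}F_{2i+1}=t_{f_{2i+1}^{-1}(\alpha_{2i})}$: one must track which Dehn twists slide past which curves. This stays manageable because two observations annihilate most of the long products: $\beta$ is disjoint from every $\alpha_k$ with $k\ge 5$, and the curves $\alpha_3,\dots,\alpha_{2i-3}$ are disjoint from each of $\gamma_{2i-1},\gamma_{2i},\gamma_{2i+1}$; the remaining manipulations use only the braid relation and the identities $t_{\alpha_{k+1}}^{-1}(\alpha_k)=t_{\alpha_k}(\alpha_{k+1})$, $t_{\alpha_{k+1}}(\alpha_k)=t_{\alpha_k}^{-1}(\alpha_{k+1})$. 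The hypotheses $i\ge 3$ and $i\le g-1$ enter exactly to guarantee these disjointnesses and that every indexed curve that occurs is defined.
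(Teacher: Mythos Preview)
Your proof is correct. For parts~(1) and~(2) you follow essentially the same route as the paper: the identity $F_{2j+1}=t_{\gamma_{2j-1}}F_{2j-1}$ that you isolate up front is exactly what the paper derives (written there as $F_{2i+1}=t_{\gamma_{2i+1}}^{-1}F_{2i+3}$) for part~(2), and your two curve--pushing computations $(t_{\gamma_{2i-1}}F_{2i+1})(\gamma_{2i-1})=\gamma_{2i}$ and $F_{2i-1}(\gamma_{2i})=\gamma_{2i+1}$ coincide line for line with the paper's.

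Your part~(3), however, is a genuinely different argument. The paper factors $F_{2g-1}=t_{\gamma_{2g-1}}^{-1}\cdot\bigl(t_{\alpha_3}t_{\alpha_5}\cdots t_{\alpha_{2g-3}}t_\beta t_{\alpha_{2g-1}}\bigr)$, deduces that the bracketed product lies in $G$ (using $t_{\gamma_{2g-1}}\in G$ from part~(1)), and then reads off $t_{\alpha_{2g}}$ from $F_{2g-1}=\bigl(t_{\alpha_3}\cdots t_\beta t_{\alpha_{2g-1}}\bigr)t_{\alpha_{2g}}^{-1}$. You instead recognize $F_{2g-3}^{-1}F_{2g-1}$ as the single Dehn twist $t_{t_{\alpha_{2g}}(\gamma_{2g-2})}$ and use the braid relation between $\alpha_{2g}$ and $\gamma_{2g-2}$ together with $t_{\gamma_{2g-2}}\in G$. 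Your route is a bit more uniform with the rest of the lemma (it recycles the same identity and the same conjugation trick), while the paper's route is slightly more direct and uses $t_{\gamma_{2g-1}}$ rather than $t_{\gamma_{2g-2}}$; both are short and valid.
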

\begin{proof}
First, we show the former part of Lemma~\ref{lem:6} (1) (i.e., the element $t_{\gamma_{2i}}$ 
is in $G$ for $3\leq i \leq g-1$). 
Note that the following holds:
\begin{align*}
f_{2i+1} (\gamma_{2i-1}) &=  t_\beta t_{\alpha_{2i+1}} t_{\alpha_{2i+2}}^{-1} \cdots t_{\alpha_{2g-1}} t_{\alpha_{2g}}^{-1}  (t_{\alpha_{2i-1}}(\alpha_{2i})) = t_\beta t_{\alpha_{2i+1}} (t_{\alpha_{2i-1}}(\alpha_{2i})). 
\end{align*}
Therefore, by $3\leq i$, we have $f_{2i+1} (\gamma_{2i-1}) = t_{\alpha_{2i+1}} t_{\alpha_{2i-1}}(\alpha_{2i})$. 
This gives 
\begin{align*}
F_{2i+1} (\gamma_{2i-1})
&=  t_{\alpha_3} t_{\alpha_5} \cdots t_{\alpha_{2i-3}} t_{\alpha_{2i-1}} (t_{\alpha_{2i+1}} t_{\alpha_{2i-1}}(\alpha_{2i})) \\
&= t_{\alpha_{2i-1}} (t_{\alpha_{2i+1}} t_{\alpha_{2i-1}}(\alpha_{2i})). 
\end{align*}
From the above arguments with $t_{\alpha_{2i}} t_{\alpha_{2i+1}} (\alpha_{2i}) = \alpha_{2i+1}$, we have 
\begin{align*}
t_{\gamma_{2i-1}} F_{2i+1} (\gamma_{2i-1}) & = t_{\alpha_{2i-1}} t_{\alpha_{2i}} t_{\alpha_{2i-1}}^{-1} t_{\alpha_{2i-1}} t_{\alpha_{2i+1}} t_{\alpha_{2i-1}} (\alpha_{2i}) \\
&= t_{\alpha_{2i-1}} t_{\alpha_{2i}}  t_{\alpha_{2i-1}} t_{\alpha_{2i+1}} (\alpha_{2i}) \\
&= t_{\alpha_{2i}} t_{\alpha_{2i-1}}  t_{\alpha_{2i}} t_{\alpha_{2i+1}} (\alpha_{2i}) \\
&= t_{\alpha_{2i}} t_{\alpha_{2i-1}} (\alpha_{2i+1}) \\
&= t_{\alpha_{2i}} (\alpha_{2i+1}) (=\gamma_{2i}), 
\end{align*}
and hence the element $t_{\gamma_{2i}}$ is in $G$. 

Second, we show the latter part of Lemma~\ref{lem:6} (1) (i.e., the element $t_{\gamma_{2i+1}}$ is in $G$ for $3\leq i\leq g-1$). 
By $3 \leq i$, we have
\begin{align*}
f_{2i-1} (\gamma_{2i}) &=  t_\beta t_{\alpha_{2i-1}} t_{\alpha_{2i}}^{-1} \cdots t_{\alpha_{2g-1}} t_{\alpha_{2g}}^{-1} (t_{\alpha_{2i}}(\alpha_{2i+1})) \\
&= t_{\alpha_{2i-1}} t_{\alpha_{2i}}^{-1} \cdots t_{\alpha_{2g-1}} t_{\alpha_{2g}}^{-1} t_{\alpha_{2i}}(\alpha_{2i+1}) \\
&= t_{\alpha_{2i-1}} t_{\alpha_{2i}}^{-1} t_{\alpha_{2i+1}} t_{\alpha_{2i+2}}^{-1} t_{\alpha_{2i}} (\alpha_{2i+1}) \\
&= t_{\alpha_{2i-1}} t_{\alpha_{2i}}^{-1} t_{\alpha_{2i+1}} t_{\alpha_{2i}} t_{\alpha_{2i+2}}^{-1} (\alpha_{2i+1}) \\
&= t_{\alpha_{2i-1}} t_{\alpha_{2i}}^{-1} t_{\alpha_{2i+1}} t_{\alpha_{2i}} t_{\alpha_{2i+1}} (\alpha_{2i+2}). 
\end{align*}
Moreover, by $t_{\alpha_{2i-1}} t_{\alpha_{2i}}^{-1} t_{\alpha_{2i+1}} t_{\alpha_{2i}} t_{\alpha_{2i+1}} 
= t_{\alpha_{2i-1}} t_{\alpha_{2i}}^{-1} t_{\alpha_{2i}} t_{\alpha_{2i+1}} t_{\alpha_{2i}} 
= t_{\alpha_{2i+1}} t_{\alpha_{2i-1}} t_{\alpha_{2i}}$, we obtain
\begin{align*}
f_{2i-1} (\gamma_{2i}) = t_{\alpha_{2i+1}} t_{\alpha_{2i-1}} t_{\alpha_{2i}}(\alpha_{2i+2}) = t_{\alpha_{2i+1}}(\alpha_{2i+2}), 
\end{align*}
and moreover, 
we have
\begin{align*}
F_{2i-1} (\gamma_{2i}) &= t_{\alpha_3} t_{\alpha_5} \cdots t_{\alpha_{2i-3}} f_{2i-1} (\gamma_{2i}) \\
&= t_{\alpha_3} t_{\alpha_5} \cdots t_{\alpha_{2i-3}} (t_{\alpha_{2i+1}}(\alpha_{2i+2})) \\
&= t_{\alpha_{2i+1}}(\alpha_{2i+2}) (=\gamma_{2i+1}). 
\end{align*}
Therefore, the element $t_{\gamma_{2i+1}}$ is in $G$. 

Third, we show Lemma~\ref{lem:6} (2) (i.e., the element $F_{2i+3}$ is in $G$ for $i \leq g-2$). 
By $3 \leq i$, we obtain
\begin{align*}
f_{2i+1} &=  t_\beta t_{\alpha_{2i+1}} t_{\alpha_{2i+2}}^{-1} t_{\alpha_{2i+3}} t_{\alpha_{2i+4}}^{-1} \cdots t_{\alpha_{2g-1}} t_{\alpha_{2g}}^{-1} \\
&= t_{\alpha_{2i+1}} t_{\alpha_{2i+2}}^{-1} t_{\alpha_{2i+1}}^{-1} t_{\alpha_{2i+1}} \cdot t_\beta t_{\alpha_{2i+3}} t_{\alpha_{2i+4}}^{-1} \cdots t_{\alpha_{2g-1}} t_{\alpha_{2g}}^{-1} \\
&= t_{\alpha_{2i+1}} t_{\alpha_{2i+2}}^{-1} t_{\alpha_{2i+1}}^{-1} \cdot t_{\alpha_{2i+1}} \cdot f_{2i+3}. 
\end{align*}
Therefore, we have
\begin{align*}
F_{2i+1} &= t_{\alpha_3} t_{\alpha_5} \cdots t_{\alpha_{2i-3}} t_{\alpha_{2i-1}} \cdot t_{\alpha_{2i+1}} t_{\alpha_{2i+2}}^{-1} t_{\alpha_{2i+1}}^{-1} \cdot t_{\alpha_{2i+1}} \cdot f_{2i+3} \\
&= t_{\alpha_{2i+1}} t_{\alpha_{2i+2}}^{-1} t_{\alpha_{2i+1}}^{-1} \cdot t_{\alpha_3} t_{\alpha_5} \cdots t_{\alpha_{2i-3}} t_{\alpha_{2i-1}} \cdot t_{\alpha_{2i+1}} \cdot f_{2i+3} \\
&= t_{\gamma_{2i+1}}^{-1} F_{2i+3}. 
\end{align*}
Since $t_{\gamma_{2i+1}}$ is in $G$, the element $F_{2i+3}$ is in $G$. 

Finally, we show Lemma~\ref{lem:6} (3) (i.e., the element $t_{\alpha_{2g}}$ is in $G$ for $i=g-1$). 
If $i=g-1$, then $F_{2i+1} = F_{2g-1} = t_{\alpha_3} t_{\alpha_5} \cdots t_{\alpha_{2g-3}} f_{2g-1}$ is in $G$. 
Then, we see that the following holds: 
\begin{align*}
F_{2g-1} &= t_{\alpha_3} t_{\alpha_5} \cdots t_{\alpha_{2g-3}} t_\beta t_{\alpha_{2g-1}} t_{\alpha_{2g}}^{-1} \\
&= t_{\alpha_3} t_{\alpha_5} \cdots t_{\alpha_{2g-3}} t_\beta \cdot t_{\alpha_{2g-1}} t_{\alpha_{2g}}^{-1} t_{\alpha_{2g-1}}^{-1} t_{\alpha_{2g-1}} \\
&= t_{\alpha_{2g-1}} t_{\alpha_{2g}}^{-1} t_{\alpha_{2g-1}}^{-1} \cdot t_{\alpha_3} t_{\alpha_5} \cdots t_{\alpha_{2g-3}} t_\beta t_{\alpha_{2g-1}} \\
&= t_{\gamma_{2g-1}} 
\cdot t_{\alpha_3} t_{\alpha_5} \cdots t_{\alpha_{2g-3}} t_\beta t_{\alpha_{2g-1}}.
\end{align*}
This means that $t_{\alpha_3} t_{\alpha_5} \cdots t_{\alpha_{2g-3}} t_\beta t_{\alpha_{2g-1}}$ is in $G$ since $t_{\gamma_{2i+1}} = t_{\gamma_{2g-1}}$ is in $G$. 
Therefore, the element $t_{\alpha_{2g}}$ is in $G$ by $F_{2g-1} = (t_{\alpha_3} t_{\alpha_5} \cdots t_{\alpha_{2g-3}} t_\beta t_{\alpha_{2g-1}}) t_{\alpha_{2g}}^{-1}$, and the lemma follows. 
\end{proof}

\begin{proof}[Proof of Theorem~\ref{thm:6}] 
We note that we have seen that the elements $t_{\alpha_1}$, $t_{\alpha_2}$, $t_{\alpha_3}$, $t_{\gamma_4}$, $t_{\gamma_5}$ and $f_5$ are contained in $G$ by Lemma~\ref{lem:5}. 

Suppose that $g=3$. 
Then, we have
\begin{align*}
f_5 = t_\beta t_{\alpha_5} t_{\alpha_6}^{-1} = t_{\alpha_5} t_{\alpha_6}^{-1} t_{\alpha_5}^{-1} t_\beta t_{\alpha_5} = t_{\gamma_5}^{-1} \cdot t_\beta t_{\alpha_5}, 
\end{align*}
and hence the element $t_\beta t_{\alpha_5}$ is in $G$. 
Moreover, $t_{\alpha_6}$ is in $G$ by $f_5 = (t_\beta t_{\alpha_5}) t_{\alpha_6}^{-1}$, $t_{\alpha_5}$ is in $G$ by $\gamma_5 = t_{\alpha_5}(\alpha_6) = t_{\alpha_6}^{-1}(\alpha_5)$, and $t_{\alpha_4}$ is in $G$ by $\gamma_4 = t_{\alpha_4}(\alpha_5) = t_{\alpha_5}^{-1}(\alpha_4)$. 
Therefore, the elements $t_{\alpha_1},t_{\alpha_2},\ldots,t_{\alpha_6}$ are contained in $G$, and hence we see that $t_\beta$ is in  $G$ by $f_1 = t_\beta t_{\alpha_1} t_{\alpha_2}^{-1} \cdots t_{\alpha_5} t_{\alpha_6}^{-1}$. 
Since $\M$ is generated by $t_{\alpha_1},t_{\alpha_2},\ldots,t_{\alpha_6},t_\beta$ if $g=3$, we see that $G=\M$. 

Suppose that $g\geq 4$. 
Then, we have
\begin{align*}
f_5 &= t_\beta t_{\alpha_5} t_{\alpha_6}^{-1} t_{\alpha_7} t_{\alpha_8}^{-1} \cdots t_{\alpha_{2g-1}} t_{\alpha_{2g}}^{-1} \\
&= t_{\alpha_5} t_{\alpha_6}^{-1} t_{\alpha_5}^{-1} t_{\alpha_5} \cdot t_\beta  t_{\alpha_7} t_{\alpha_8}^{-1} \cdots t_{\alpha_{2g-1}} t_{\alpha_{2g}}^{-1} \\
&= t_{\gamma_5}^{-1} t_{\alpha_5} f_7. 
\end{align*}
This means that the element $t_{\alpha_5} f_7$ is contained in $G$. 
Moreover, since $t_{\alpha_3}$ and $f_5$ are in $G$, $F_5 = t_{\alpha_3} f_5$ and $F_7 = t_{\alpha_3} t_{\alpha_5} f_7$ are in $G$. 
Therefore, the condition of Lemma~\ref{lem:6} is satisfied for the case where $i=3$. 
By applying Lemma~\ref{lem:6} repeatedly, we see that $t_{\gamma_j}$ is in $G$ for $j=6,7,\ldots,2g-1$ and $t_{\alpha_{2g}}\in G$, and hence the element $t_{\alpha_{2g-1}}$ is in $G$ by $\gamma_{2g-1} = t_{\alpha_{2g-1}(\alpha_{2g})} = t_{\alpha_{2g}}^{-1}(\alpha_{2g-1})$. 
By repeating this argument, we find that the elements $t_{\alpha_4},t_{\alpha_5},\ldots,t_{\alpha_{2g}}$ are contained in $G$. 
Moreover, from the definition of $f_1$, we have $t_\beta \in G$, and hence $G=\M$.

This finishes the proof. 
\end{proof}

Theorem~\ref{thm:50} follows from Theorem~\ref{thm:51} below.

\begin{thm}\label{thm:51}
Let $H'$ be the subgroup of $\H$ generated by $\mathcal{R}$ 
and $r_\pi \mathcal{R} r_\pi^{-1}$, where $\mathcal{R}$ is defined in Lemma~\ref{ex:4} and $r_\pi$ is the rotation defined in Section~\ref{Pre}. 
Then, $H'=\H$ for $g\geq 2$, and $\mathcal{H}_1$ is generated by $t_{\alpha_1}$ and $t_{\alpha_2}$. 
\end{thm}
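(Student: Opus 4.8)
The plan is to mimic the strategy used in the proof of Theorem~\ref{thm:5}: we want to show that the Dehn twists $t_{\alpha_1},\dots,t_{\alpha_{2g+1}}$ all lie in $H'$, since these generate $\H$ by \cite{BH}. The key input is Lemma~\ref{ex:4}, which tells us that $\mathcal{R}^{-(2g-1)} = t_{\alpha_{2g+1}}$, so $t_{\alpha_{2g+1}}\in H'$ for free. Applying $r_\pi$ to the chain relation, conjugation by $r_\pi$ sends the generator $\mathcal{R}$ to another root of a Dehn twist, and one should first compute $r_\pi\mathcal{R}r_\pi^{-1}$ explicitly in terms of the standard curves. From Figure~\ref{humphries1}, $r_\pi$ is the rotation by $\pi$ about the $x$-axis; I would determine $r_\pi(\alpha_i)$ for each $i$ (one expects something like $r_\pi$ reversing the order of the chain, e.g.\ $r_\pi(\alpha_i) = \alpha_{2g+2-i}$ or a similar palindromic reindexing, together with $r_\pi(\beta)$ being some $\alpha_j$ or $\beta$ itself). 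This will show that $(r_\pi\mathcal{R}r_\pi^{-1})^{-(2g-1)} = t_{r_\pi(\alpha_{2g+1})}$, giving a second specific Dehn twist in $H'$ — presumably $t_{\alpha_1}$ or another endpoint curve.

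Once two Dehn twists $t_{\alpha_{2g+1}}$ and $t_{r_\pi(\alpha_{2g+1})}$ are in $H'$, I would bootstrap as in Theorem~\ref{thm:5}: multiply $\mathcal{R}^{2g-1}\cdot(\text{known twist})$ type expressions, or better, use that $\mathcal{R}$ itself is (essentially) $(t_{\alpha_1}^2 t_{\alpha_2}\cdots t_{\alpha_{2g-1}})^{g-1}$ times a known twist, so the product $T:=t_{\alpha_1}^2 t_{\alpha_2}\cdots t_{\alpha_{2g-1}}$ raised to the $(g-1)$st power lies in $H'$. Combined with the conjugate version $r_\pi T r_\pi^{-1}$ to the $(g-1)$st power, and the fact that we already have the endpoint twists, one peels off Dehn twists one at a time using the standard moves $t_{\alpha_{i+1}}^{-1}(\alpha_i) = t_{\alpha_i}(\alpha_{i+1})$ and conjugation $t_{f(c)} = ft_cf^{-1}$. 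Concretely: knowing $t_{\alpha_{2g+1}}$ and $T^{g-1}$, and knowing $\alpha_{2g+1}$ is disjoint from $\alpha_1,\dots,\alpha_{2g-1}$, one can hope to recover $t_{\alpha_{2g-1}}$, then $t_{\alpha_{2g-2}}$, etc., walking down the chain; the conjugate generator supplies the information needed from the other end (in particular about $t_\beta$ and $t_{\alpha_{2g}}$, which the chain $\alpha_1,\dots,\alpha_{2g-1}$ alone does not see).

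For the base case $g=1$: here $\H_1 = \M_1$, $\mathcal{R}$ degenerates (the chain $\alpha_1,\dots,\alpha_{2g-1}$ is just $\alpha_1$ and the exponent $g-1$ is $0$), so I would treat $g=1$ separately and simply check that $t_{\alpha_1}$ and $t_{\alpha_2}$ generate $\mathcal{M}_1 \cong SL_2(\mathbb{Z})$ — this is classical, as $\alpha_1$ and $\alpha_2$ are curves intersecting once and $SL_2(\mathbb{Z})$ is generated by two such elementary matrices. The statement of the theorem is phrased to reflect exactly this dichotomy.

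The main obstacle I anticipate is the bootstrapping step for $g\ge 2$: unlike in Theorem~\ref{thm:5}, where the two generators $h_1,h_2$ differ by a single Dehn twist $t_{\alpha_1}$ and the induction is transparent, here the two generators are a root of a Dehn twist and its $r_\pi$-conjugate, so extracting individual $t_{\alpha_i}$ requires first identifying $T^{g-1}$ and $r_\pi T^{g-1} r_\pi^{-1}$ as elements of $H'$ and then doing a somewhat delicate sequence of conjugations along the chain; care is needed because one only has the $(g-1)$st powers, not $T$ itself, so the curve-pushing arguments must be arranged so that only products that actually lie in $H'$ are ever used. Getting the reindexing of $r_\pi$ exactly right (so that the two endpoint twists obtained are genuinely at opposite ends of the chain and together generate enough) is where the proof will live or die.
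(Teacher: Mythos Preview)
Your overall strategy matches the paper's: get $t_{\alpha_{2g+1}}$ from $\mathcal{R}^{2g-1}$, get $t_{\alpha_1}$ from $(r_\pi\mathcal{R}r_\pi^{-1})^{2g-1}$ via $r_\pi(\alpha_i)=\alpha_{2g+2-i}$, and then propagate along the chain. Two points need correction.

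First, the obstacle you flag --- ``one only has the $(g-1)$st powers, not $T$ itself'' --- is real, and you have not resolved it. The paper's key move is arithmetic: writing $T = t_{\alpha_1}^2 t_{\alpha_2}\cdots t_{\alpha_{2g-1}}$, you know $T^{g-1} = \mathcal{R}\,t_{\alpha_{2g+1}} \in H'$ and, by the chain relation, $T^{2g-1} = t_{\alpha_{2g+1}}^2 \in H'$. Since $1 = (2g-1) - 2(g-1)$, you get $T = t_{\alpha_{2g+1}}^2 (T^{g-1})^{-2} \in H'$. Now $t_{\alpha_1}\in H'$ gives $f := t_{\alpha_1}t_{\alpha_2}\cdots t_{\alpha_{2g-1}} \in H'$, and the standard fact $f(\alpha_j)=\alpha_{j+1}$ for $1\le j\le 2g-2$ immediately yields $t_{\alpha_2},\dots,t_{\alpha_{2g-1}}\in H'$. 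Your proposed ``walking down the chain'' via curve-pushing with only $T^{g-1}$ would not work without this B\'ezout step.

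Second, drop the reference to $t_\beta$: the generating set for $\H$ is $t_{\alpha_1},\dots,t_{\alpha_{2g+1}}$, and $\beta$ plays no role here. The only missing twist after the above is $t_{\alpha_{2g}}$. The paper extracts it from the conjugate generator: with $h := t_{\alpha_{2g+1}}^2 t_{\alpha_{2g}}\cdots t_{\alpha_3}$ one has $h^{g-1} = (r_\pi\mathcal{R}r_\pi^{-1})\,t_{\alpha_1}\in H'$, and since $h^{-1}(\alpha_i)=\alpha_{i+1}$ for $3\le i\le 2g-1$, conjugating the already-obtained $t_{\alpha_{g+1}}$ by $h^{-(g-1)}$ gives $t_{\alpha_{2g}}$. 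Note this last step uses the $(g-1)$st power directly, so no second B\'ezout trick is needed. Your treatment of $g=1$ is fine.
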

\begin{proof}
Suppose that $g\geq 2$. 
We set $f=t_{\alpha_1} t_{\alpha_2} t_{\alpha_3} \cdots t_{\alpha_{2g-1}}$. 
From Lemma~\ref{ex:4}, $t_{\alpha_{2g+1}}^{-1} = \mathcal{R}^{2g-1}$ is in $H'$, and therefore $(t_{\alpha_1} f)^{g-1} = \mathcal{R}t_{\alpha_{2g+1}}$ is also in $H'$. 
From these and the chain relation $(t_{\alpha_1}f)^{2g-1} = t_{\alpha_{2g+1}}^2$ (see the proof of Lemma~\ref{ex:4}), we see that $t_{\alpha_1}f = t_{\alpha_{2g+1}}^2 (t_{\alpha_1}f)^{-2(g-1)}$ is contained in $H'$. 
Therefore, by $(r_\pi \mathcal{R} r_\pi^{-1})^{2g-1} = r_\pi t_{\alpha_{2g+1}}^{-1} r_\pi^{-1} = t_{r_\pi(\alpha_{2g+1})}^{-1} = t_{\alpha_1}^{-1}$, the elements $t_{\alpha_1}$ and $f$ are in $H'$. 
Since it is easy to check that $f(\alpha_j) = \alpha_{j+1}$ for $j=1,2,\ldots,2g-2$, the elements $t_{\alpha_2}, t_{\alpha_3},\ldots,t_{\alpha_{2g-1}}$ are contained in $H'$. 
Here, we see that
\begin{align*}
r_\pi \mathcal{R} r_\pi^{-1} &= r_\pi (t_{\alpha_1}^2 t_{\alpha_2} t_{\alpha_3} \cdots t_{\alpha_{2g-1}})^{g-1} t_{\alpha_{2g+1}}^{-1} r_\pi^{-1} \\
&= (t_{r_\pi(\alpha_1)}^2 t_{r_\pi(\alpha_2)} t_{r_\pi(\alpha_3)} \cdots t_{r_\pi(\alpha_{2g-1})})^{g-1} t_{r_\pi(\alpha_{2g+1})}^{-1} \\
&= (t_{\alpha_{2g+1}}^2 t_{\alpha_{2g}} t_{\alpha_{2g-1}} \cdots t_{\alpha_3})^{g-1} t_{\alpha_1}^{-1}, 
\end{align*}
and hence $(t_{\alpha_{2g+1}}^2 t_{\alpha_{2g}} t_{\alpha_{2g-1}} \cdots t_{\alpha_3})^{g-1} = r_\pi \mathcal{R} r_\pi^{-1} t_{\alpha_1}$ is in $H'$ since $t_{\alpha_1}$ is in $H'$. 
When we set $h=t_{\alpha_{2g+1}}^2 t_{\alpha_{2g}} t_{\alpha_{2g-1}} \cdots t_{\alpha_3}$, $h^{g-1}$ is in $H'$. 
It is easily seen that $h^{-1} (\alpha_{i}) = \alpha_{i+1}$ for $i=3,4,\ldots,2g-1$, and hence $h^{-(g-1)}(\alpha_{g+1}) = \alpha_{2g}$. 
Therefore, $t_{\alpha_{2g}}$ is in $H'$ since $t_{\alpha_{g+1}}$ and $h^{g-1}$ are contained in $H'$. 
Summarizing, the elements $t_{\alpha_1},t_{\alpha_2},\ldots, t_{\alpha_{2g+1}}$ are contained in $H'$, and hence $H' = \H$ for $g\geq 2$. 
By Theorem~\ref{Lickorishthm}, $\mathcal{H}_1$ is generated by $t_{\alpha_1}$ and $t_{\alpha_2}$. 
This completes the proof. 
\end{proof}

\section{Proofs of Theorems~\ref{thm:3} and~\ref{thm:4}}
In this section, we prove Theorems~\ref{thm:3} and~\ref{thm:4} in the introduction. 
Theorem~\ref{thm:3} (resp.~\ref{thm:4}) follows from Theorems~\ref{thm:7} and~\ref{thm:10} (resp.~\ref{thm:8} and~\ref{thm:11}) below. 
\begin{thm}\label{thm:7}
Let $g\geq 9$, and let $G_1$ be the subgroup of $\M$ generated by $\rho_n$ and $(t_{b_3}t_{a_5}^{-1}t_{c_6}) \rho_n (t_{b_3}t_{a_5}^{-1}t_{c_6})^{-1}$, where $\rho_n$ is defined in Lemma~\ref{ex:2}. 
Then, $G_1 = \M$. 
\end{thm}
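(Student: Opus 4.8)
The plan is to show that $G_1$ contains all the Dehn twists $t_{a_1},\dots,t_{a_g},t_{b_1},\dots,t_{b_g},t_{c_1},\dots,t_{c_{g-1}}$ of Theorem~\ref{Lickorishthm}, which gives $G_1=\M$. Write $\rho_n=r\theta$ with $\theta=t_{c_1}t_{b_1}^{-n}t_{a_1}$, and put $w=t_{b_3}t_{a_5}^{-1}t_{c_6}$, so that $G_1=\langle\rho_n,\,w\rho_n w^{-1}\rangle$. The first move is to replace the second generator by a product of commuting twists: since $w\rho_n w^{-1}=\rho_n\cdot(\rho_n^{-1}w\rho_n w^{-1})$ we have $G_1=\langle\rho_n,u\rangle$ with $u:=\rho_n^{-1}w\rho_n w^{-1}=\theta^{-1}(r^{-1}wr)\,\theta\,w^{-1}$. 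Here $r^{-1}wr=t_{b_2}t_{a_4}^{-1}t_{c_5}$, and since each of $b_2,a_4,c_5$ is disjoint from $a_1,b_1,c_1$, the element $\theta$ commutes with $r^{-1}wr$, so the two copies of $\theta$ cancel and
\[
u=(r^{-1}wr)\,w^{-1}=t_{b_2}t_{a_4}^{-1}t_{c_5}\cdot t_{c_6}^{-1}t_{a_5}t_{b_3}^{-1}
\]
is a product of six commuting Dehn twists about the pairwise disjoint nonseparating curves $b_2,a_4,c_5,c_6,a_5,b_3$.

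Next I would manufacture many elements of $G_1$ of a similar shape. Put $W_j=t_{b_{j+2}}t_{a_{j+4}}^{-1}t_{c_{j+5}}$ (indices mod $g$), so that $u=W_0W_1^{-1}$ and $r^kur^{-k}=W_kW_{k+1}^{-1}$. An induction on $k$ shows $\rho_n^{k}u\rho_n^{-k}=r^kur^{-k}=W_kW_{k+1}^{-1}$ for $k$ in a range of length about $g$: at each stage the curves occurring in $W_{k-1}W_k^{-1}$ are still disjoint from $a_1,b_1,c_1$ (this breaks only once $k$ is within a bounded distance of $g$), so conjugation by $\rho_n=r\theta$ collapses to conjugation by $r$. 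Hence $W_kW_{k+1}^{-1}\in G_1$ for all such $k$, and telescoping gives $W_0W_j^{-1}=\prod_{k=0}^{j-1}W_kW_{k+1}^{-1}\in G_1$, so that $W_iW_j^{-1}\in G_1$ for all $0\le i,j\le g-5$.

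The heart of the proof is then to extract single Dehn twists from the words $W_iW_j^{-1}$. When the index gap $|i-j|$ is neither $2$ nor $3$ the six twists in $W_iW_j^{-1}$ all commute and such products merely reproduce the rigid triples $W_\ell$; the useful input is the gaps $|i-j|\in\{2,3\}$, where two of the six curves meet once — for instance
\[
W_0W_3^{-1}=(t_{b_2}t_{a_4}^{-1})(t_{c_5}t_{b_5}^{-1})(t_{a_7}t_{c_8}^{-1}),\qquad i(c_5,b_5)=1
\]
— so that braid relations can be used to peel off one twist at a time. Carrying this through is where $g\ge 9$ is genuinely needed, in order to have enough disjoint ``room'', and it yields, say, $t_{a_4},t_{b_2},t_{c_5}\in G_1$. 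Conjugating these by (positive and negative) powers of $\rho_n$, which off the curves $a_1,b_1,c_1$ acts as the index shift $(a_i,b_i,c_i)\mapsto(a_{i+1},b_{i+1},c_{i+1})$ and in particular cycles $b_2\mapsto b_3\mapsto\cdots\mapsto b_g\mapsto b_1$ and $a_4\mapsto a_5\mapsto\cdots\mapsto a_g\mapsto a_1$, we obtain $t_{b_i}\in G_1$ for all $i$, $t_{a_i}\in G_1$ for all $i\ne 2$, and $t_{c_i}\in G_1$ for all $i\ne 1$.

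It remains to recover $r$ together with the last two twists. Since $t_{a_1},t_{b_1}\in G_1$, the element $\sigma:=\rho_n t_{a_1}^{-1}t_{b_1}^{n}=rt_{c_1}$ lies in $G_1$, and by the computation used for $\rho_n^g$ in Lemma~\ref{ex:2} one has $\sigma^g=(rt_{c_1})^g=t_{c_2}t_{c_3}\cdots t_{c_g}t_{c_1}$; as $t_{c_2},\dots,t_{c_g}\in G_1$ this forces $t_{c_1}\in G_1$, and then $r=\sigma t_{c_1}^{-1}\in G_1$. Conjugating $t_{a_1}$ by $r$ now supplies the remaining twist $t_{a_2}\in G_1$, so every Lickorish generator lies in $G_1$ and $G_1=\M$ by Theorem~\ref{Lickorishthm}. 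The main obstacle I foresee is the third paragraph: the explicit curve-and-braid bookkeeping that turns the six-twist words $W_iW_j^{-1}$ into single Dehn twists, which is also precisely what forces the hypothesis $g\ge 9$.
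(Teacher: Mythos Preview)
Your opening move is essentially the paper's, but you factored on the wrong side: the curves $b_2,a_4,c_5$ are \emph{not} all disjoint from $a_1,b_1,c_1$, since $b_2$ meets $c_1$ once, so $\theta$ does not commute with $r^{-1}wr$ and your formula for $u$ is wrong. Factoring as $w\rho_nw^{-1}=(w\cdot\rho_nw^{-1}\rho_n^{-1})\rho_n$ and using that $b_3,a_5,c_6$ \emph{are} disjoint from $a_1,b_1,c_1$ gives $\rho_n(b_3,a_5,c_6)=(b_4,a_6,c_7)$ and hence the correct six-twist element $(t_{b_3}t_{a_5}^{-1}t_{c_6})(t_{b_4}t_{a_6}^{-1}t_{c_7})^{-1}\in G_1$; in your notation this is $W_1W_2^{-1}$, not $W_0W_1^{-1}$. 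The same intersection $b_2\cap c_1$ also invalidates the claim that $\rho_n$ cycles $b_2\mapsto b_3$, so your telescoping range and your propagation of $t_{b_2}$ both need repair.

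The substantive gap is the step you yourself flag: you never actually extract a single Dehn twist from the $W_iW_j^{-1}$, and ``braid relations at gap $2,3$'' is not the mechanism the paper uses. The paper's Lemma~\ref{lem:8} takes exactly your $W_1W_2^{-1}$ (called $\Phi_3$ there, with $k=3$) together with $\rho_n$-conjugation and, through a sequence of explicit curve computations, produces the twist-pairs $t_{a_3}t_{a_4}^{-1},\,t_{b_3}t_{b_4}^{-1},\,t_{c_3}t_{c_4}^{-1}\in G_1$; this lemma is where the hypothesis $g\ge 9$ (i.e.\ $g\ge k+6$) is genuinely used. Lemma~\ref{lem:7} then upgrades these to the seven twist-pairs required by Lemma~\ref{lem:10}, and Lemma~\ref{lem:10} is the crucial new input you are missing: it invokes the \emph{lantern relation} $t_{a_k}t_{c_k}t_{c_{k+1}}t_{a_{k+2}}=t_{a_{k+1}}t_{d_1}t_{d_2}$ to write the single twist $t_{a_{k+2}}$ as a product of three twist-pairs already in $G_1$. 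Braid relations alone do not obviously accomplish this, and you give no indication how they would. Once one has $t_{a_3},t_{b_3},t_{c_3}\in G_1$, your endgame (recover $t_{a_1},t_{b_1}$ by $\rho_n$-conjugation, then $rt_{c_1}$, then $t_{c_1}$, then $r$) is essentially the paper's, modulo the small curve-intersection bookkeeping noted above.
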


\begin{thm}\label{thm:10}
Let $g\geq 3$, and let $G_2$ be the subgroup of $\M$ generated by $\rho_n$, $t_{a_2} \rho_n t_{a_2}^{-1}$ and $(t_{a_3}t_{b_3}t_{c_2}) \rho_n (t_{a_3}t_{b_3}t_{c_2})^{-1}$, where $\rho_n$ is defined in Lemma~\ref{ex:2}. 
Then, $G_2 = \M$. 
\end{thm}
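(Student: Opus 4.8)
The plan is to show that $G_2$ contains Lickorish's generating set $t_{a_1},\dots,t_{a_g},t_{b_1},\dots,t_{b_g},t_{c_1},\dots,t_{c_{g-1}}$ and then invoke Theorem~\ref{Lickorishthm}. In fact it is enough to put the three twists $t_{a_1},t_{b_1},t_{c_1}$ into $G_2$: once they are there, so is the rotation $r=\rho_n\,t_{a_1}^{-1}t_{b_1}^{\,n}t_{c_1}^{-1}$, and since $r$ cyclically permutes the triples $(a_i,b_i,c_i)$, conjugating $t_{a_1},t_{b_1},t_{c_1}$ by powers of $r$ produces all the remaining twists. That $\rho_n$ is pseudo-Anosov with dilatation above any prescribed bound is Lemma~\ref{ex:2}, so only $G_2=\M$ has to be proved.

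I would start from the identity $w\rho_n w^{-1}\rho_n^{-1}=w\,(\rho_n w\rho_n^{-1})^{-1}$, valid for $w\in\M$, together with the fact that $\rho_n$ acts on any curve disjoint from $a_1,b_1,c_1$ exactly as the rotation $r$. Taking $w=t_{a_2}$ and $\rho_n(a_2)=r(a_2)=a_3$ gives $(t_{a_2}\rho_n t_{a_2}^{-1})\rho_n^{-1}=t_{a_2}t_{a_3}^{-1}\in G_2$; as $\rho_n$ acts as $r$ on every $a_i$ and the $a_i$ are pairwise disjoint, conjugating by powers of $\rho_n$ and multiplying shows $t_{a_i}t_{a_j}^{-1}\in G_2$ for all $i,j$. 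Taking $w=t_{a_3}t_{b_3}t_{c_2}$ (for $g\ge 3$ the curves $a_3,b_3,c_2$ are disjoint from $a_1,b_1,c_1$) gives
\[
Z:=(t_{a_3}t_{b_3}t_{c_2})\rho_n(t_{a_3}t_{b_3}t_{c_2})^{-1}\rho_n^{-1}=t_{a_3}t_{b_3}t_{c_2}\,t_{c_3}^{-1}t_{b_4}^{-1}t_{a_4}^{-1}\in G_2 .
\]

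Next I would peel the outer twists off $Z$. Conjugating $Z$ by a suitable $t_{a_k}t_{a_3}^{-1}\in G_2$ with $a_k$ disjoint from every curve in $Z$ (so that this is conjugation by $t_{a_3}$) and then right-multiplying by $t_{a_4}t_{a_3}^{-1}\in G_2$ cancels both $a$-twists, leaving $Y:=t_{b_3}t_{c_2}t_{c_3}^{-1}t_{b_4}^{-1}\in G_2$. Using $t_{a_3}(b_3)=t_{b_3}^{-1}(a_3)$ one obtains $[t_{a_3},Y]=t_{b_3}^{-1}t_{a_3}\in G_2$ (again realised inside $G_2$ by conjugating $Y$ with a suitable $t_{a_k}t_{a_3}^{-1}$); combining this with the $a$-differences and translating by powers of $\rho_n$ (which acts as $r$ on the $b_i$ with $i\neq 1,2$) puts all $t_{b_i}^{-1}t_{a_j}$, hence all $t_{b_i}t_{b_j}^{-1}$, into $G_2$, and then stripping the $b$-twists off $Y$ in the same way gives $t_{c_2}t_{c_3}^{-1}$, hence all $t_{c_i}t_{c_j}^{-1}$. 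To reach the handle carrying $a_1,b_1,c_1$, I would expand $\rho_n^{-1}(t_{a_2}\rho_n t_{a_2}^{-1})$ and simplify with the disjointness relations: modulo the $a$-differences already available this is the commutator $[t_{a_1}^{-1},t_{b_1}^{\,n}]\in G_2$, and played off against $t_{b_1}^{-1}t_{a_1}\in G_2$ (a $\rho_n$-translate of $t_{b_3}^{-1}t_{a_3}$) together with the lantern and braid relations of Section~\ref{Pre}, this should extract $t_{a_1}$, then $t_{b_1}$, then $t_{c_1}$, completing the reduction of the first paragraph. The smallest genus $g=3$, where the index shifts effected by $\rho_n$ wrap around at once, I would dispatch by a direct computation from the explicit form of $Z$.

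The delicate step is the last one. Everything up to the element $Y$ and the three families of differences is bookkeeping with the four relations recalled in Section~\ref{Pre}; but converting ``differences of Dehn twists'' into honest single twists near the handle of $a_1,b_1,c_1$, and arranging the conjugation-by-$\rho_n$ translations to close up around that handle without colliding with $a_1,b_1,c_1$, is where the real care lies --- and is presumably what pins the hypothesis to $g\ge 3$ and forces the separate treatment of the smallest cases.
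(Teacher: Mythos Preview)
Your overall strategy matches the paper's: manufacture enough ``Dehn--twist differences'' in $G_2$, feed them into a lantern relation to extract one honest twist, then let $r$ propagate. The computations through $Z$, $Y=t_{b_3}t_{c_2}t_{c_3}^{-1}t_{b_4}^{-1}$, and $[t_{a_3},Y]=t_{b_3}^{-1}t_{a_3}$ are correct and nicely done.

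The gap is in the last stretch, and it is real. Two concrete problems. First, your $\rho_n$--translation does \emph{not} give all $b$-- and $c$--differences: since $\rho_n$ fails to act as $r$ on $b_2$ and on $c_1,c_g$, translating $t_{b_3}^{-1}t_{a_3}$ and $t_{c_2}t_{c_3}^{-1}$ never reaches $b_2$ or $c_1$. More seriously, you never produce any mixed difference $t_{a_i}t_{c_j}^{-1}$, and the lantern input of Lemma~\ref{lem:10} requires exactly those ($t_{a_k}t_{c_k}^{-1}$, $t_{a_{k+1}}t_{c_{k+1}}^{-1}$, $t_{a_k}t_{c_{k+1}}^{-1}$). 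With only $a$--$a$, $a$--$b$, and $c$--$c$ differences in hand, you cannot invoke the lantern. Second, the proposed endgame via $[t_{a_1}^{-1},t_{b_1}^{\,n}]$ and $t_{b_1}^{-1}t_{a_1}$ lives entirely in the one--handle subgroup $\langle t_{a_1},t_{b_1}\rangle$; you give no mechanism by which braid relations there produce a single twist, and the lantern that actually does the job is a three--handle relation needing the $a$--$c$ inputs above. ``Should extract'' is precisely where the argument is missing.

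The paper closes both gaps with two moves you do not have. It observes $Z^{-1}(a_1,b_4)=(a_1,c_3)$, so conjugating $t_{a_1}t_{b_4}^{-1}\in G_2$ by $Z^{-1}$ yields $t_{a_1}t_{c_3}^{-1}$, supplying the $a$--$c$ differences. It then builds the auxiliary element
\[
\sigma_n=\rho_n\cdot t_{a_1}^{-1}t_{b_1}\cdot(t_{a_2}^{-1}t_{a_1}\cdot t_{a_1}^{-1}t_{b_1})^{n-1}=r\,t_{c_1}t_{a_2}^{-n+1}\in G_2
\]
(from ingredients you already have) and uses $\sigma_n^{-2}(a_3,c_3)=(a_1,c_1)$ and $\sigma_n(a_1,b_1)=(a_2,t_{c_2}(b_2))$ to reach $c_1$ and $b_2$. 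Only after assembling the full list required by Lemma~\ref{lem:10} (with $k=1$) does it extract $t_{a_3}$, and then $t_{a_1},t_{b_1},t_{c_1},r$.
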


\begin{thm}\label{thm:8}
Let $g\geq 8$, and let $G_3$ be the subgroup of $\M$ generated by $\rho$ and $(t_{b_2}t_{a_4}^{-1}t_{c_5}) \rho (t_{b_2}t_{a_4}^{-1}t_{c_5})^{-1}$, where $\rho$ is defined in Lemma~\ref{ex:3}. 
Then, $G_3 = \M$. 
\end{thm}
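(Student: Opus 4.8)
The plan is to show that $G_3$ contains all of Lickorish's twist generators $t_{a_1},\dots,t_{a_g},t_{b_1},\dots,t_{b_g},t_{c_1},\dots,t_{c_{g-1}}$, so that $G_3=\M$ by Theorem~\ref{Lickorishthm}. Write $w=t_{b_2}t_{a_4}^{-1}t_{c_5}$ and $\sigma=w\rho w^{-1}$. I would first collect the "large-power" data. By Lemma~\ref{ex:3}, $\rho^{g}=t_{a_1}t_{a_2}\cdots t_{a_g}$. Since $w$ fixes $a_i$ for $i\neq 2$ and sends $a_2$ to $t_{b_2}(a_2)$, we get $\sigma^{g}=w\rho^{g}w^{-1}=t_{a_1}\,t_{t_{b_2}(a_2)}\,t_{a_3}\cdots t_{a_g}$; cancelling the common factors, $\rho^{-g}\sigma^{g}=t_{a_2}^{-1}\,t_{t_{b_2}(a_2)}\in G_3$, and since $\rho^{g}$ fixes each $a_i$ and carries $t_{b_2}(a_2)$ to $t_{a_2}t_{b_2}(a_2)=b_2$, conjugating by $\rho^{g}$ gives $t_{a_2}^{-1}t_{b_2}\in G_3$. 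For $1\le k\le g-1$ one has $\rho^{k}=(t_{a_2}t_{a_3}\cdots t_{a_{k+1}})\,r^{k}$, whose twist factors commute with $r^{k}(t_{a_2}^{-1}t_{b_2})r^{-k}=t_{a_{2+k}}^{-1}t_{b_{2+k}}$; thus conjugation by $\rho$ acts on these elements exactly as the rotation $r$, and $t_{a_i}^{-1}t_{b_i}\in G_3$ for every $i$. Re-ordering, $\rho^{g}\prod_{i=1}^{g}(t_{a_i}^{-1}t_{b_i})=t_{b_1}t_{b_2}\cdots t_{b_g}\in G_3$.

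A second family of elements comes from $\sigma\rho^{-1}$. Using $\rho=rt_{a_1}$ and $w(a_1)=a_1$, one computes $\sigma\rho^{-1}=wrw^{-1}r^{-1}=w\,(rwr^{-1})^{-1}=(t_{b_2}t_{a_4}^{-1}t_{c_5})(t_{b_3}t_{a_5}^{-1}t_{c_6})^{-1}$. For $g\ge 8$ the six curves $b_2,b_3,a_4,a_5,c_5,c_6$ lie on distinct handles and gaps and are pairwise disjoint, so $M:=\sigma\rho^{-1}=t_{b_2}t_{b_3}^{-1}t_{a_4}^{-1}t_{a_5}t_{c_5}t_{c_6}^{-1}\in G_3$; by the same "$\rho$ behaves like $r$" principle, all rotates $r^{k}Mr^{-k}$ lie in $G_3$ as well.

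The crux is to extract a single honest Dehn twist from this data. The approach I would take is to multiply $M$, its rotates $r^{k}Mr^{-k}$, $\rho^{g}$ and the elements $t_{a_i}^{-1}t_{b_i}$ against one another; each such product is a multitwist or a product of "one-holed-torus" factors, and by choosing the factors so that the $a$-parts, $b$-parts and $c$-parts telescope independently, and then applying the braid relation together with one instance of the lantern relation (in the spirit of the reduction from Lickorish's to Humphries' generators), one should arrive at $t_{a_1}\in G_3$. I expect this to be the main obstacle: every "obvious" combination of the generators stays inside a multitwist subgroup or inside the one-holed-torus group $\langle t_{a_2},t_{b_2}\rangle$, neither of which contains a single Dehn twist, so a genuinely non-abelian cancellation must be engineered.

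Granting $t_{a_1}\in G_3$, the conclusion is formal. Then $r=\rho\,t_{a_1}^{-1}\in G_3$, so $t_{a_i}=r^{i-1}t_{a_1}r^{-(i-1)}\in G_3$ for all $i$, whence $t_{b_i}=t_{a_i}\,(t_{a_i}^{-1}t_{b_i})\in G_3$; then $M$ together with the now-available $t_{a_4},t_{a_5},t_{b_2},t_{b_3}$ yields $t_{c_5}t_{c_6}^{-1}\in G_3$, from which a short further sequence of the moves $t_{f(c)}=ft_cf^{-1}$ and a lantern relation produces each $t_{c_i}$, exactly as in the proofs of Theorems~\ref{thm:5} and~\ref{thm:6}. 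By Theorem~\ref{Lickorishthm}, $G_3=\M$.
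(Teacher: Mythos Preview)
Your argument has a genuine gap precisely where you label it ``the main obstacle.'' You correctly compute $M:=\sigma\rho^{-1}=(t_{b_2}t_{a_4}^{-1}t_{c_5})(t_{b_3}t_{a_5}^{-1}t_{c_6})^{-1}$, and your extraction of $t_{a_i}^{-1}t_{b_i}\in G_3$ via $\rho^{g}$ and $\sigma^{g}$ is correct and rather elegant. But at the point of producing a single Dehn twist you only assert that telescoping rotates of $M$ against $\rho^{g}$ and the $t_{a_i}^{-1}t_{b_i}$, followed by some unspecified braid and lantern relations, ``should'' yield $t_{a_1}\in G_3$; no such combination is exhibited, and as you yourself note, every obvious product stays either in an abelian multitwist group or inside a single one-holed-torus subgroup. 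That step is the entire content of the theorem, and it is not carried out. (There is a similar, smaller gap at the end: obtaining a single $t_{c_i}$ from the elements $t_{c_i}t_{c_{i+1}}^{-1}$ together with all $t_{a_j},t_{b_j}$ also needs a nontrivial lantern argument you do not supply, and Theorems~\ref{thm:5} and~\ref{thm:6} are not a model for it --- those proofs work entirely in the $\alpha_j,\beta$ picture and use no lantern relation.)

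The paper closes the gap by a concrete mechanism (it says the proof is identical to that of Theorem~\ref{thm:7}, with indices shifted down by one). Your element $M$ is exactly $\Phi_2$ in the notation of Lemma~\ref{lem:8}; the hypothesis $g\ge 8=k+6$ with $k=2$ is what makes that lemma applicable with $f=\rho$, and an explicit sequence of conjugations of $\Phi_2$ and its $\rho$-translate $\Phi_3$ produces $t_{a_2}t_{a_3}^{-1}$, $t_{b_2}t_{b_3}^{-1}$, $t_{c_2}t_{c_3}^{-1}\in G_3$. Lemma~\ref{lem:7} (again $k=2$, $f=\rho$) then assembles the seven ``mixed'' products required by Lemma~\ref{lem:10}, whose lantern relation gives the single twist $t_{a_4}$, and hence $t_{a_2},t_{b_2},t_{c_2}\in G_3$. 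From there the bootstrap is just as you outline: $\rho$ carries $t_{a_2}$ to every $t_{a_i}$, so $r=\rho t_{a_1}^{-1}\in G_3$, and conjugating $t_{b_2},t_{c_2}$ by powers of $r$ yields all remaining Lickorish generators.
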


\begin{thm}\label{thm:11}
Let $g\geq 3$, and let $G_4$ be the subgroup of $\M$ generated by $\rho'$, $t_{a_2} \rho' t_{a_2}^{-1}$ and $(t_{a_3}t_{b_3}t_{c_2}) \rho' (t_{a_3}t_{b_3}t_{c_2})^{-1}$, where $\rho'$ is defined in Lemma~\ref{ex:3}. 
Then, $G_4 = \M$. 
\end{thm}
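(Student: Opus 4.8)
The plan, following the same pattern as Theorems~\ref{thm:7}--\ref{thm:10}, is to reduce the statement to showing that the Lickorish generators $t_{a_1},\dots,t_{a_g},t_{b_1},\dots,t_{b_g},t_{c_1},\dots,t_{c_{g-1}}$ all lie in $G_4$, whence $G_4=\M$ by Theorem~\ref{Lickorishthm}. Write $g_0=\rho'=rt_{c_1}$, $g_1=t_{a_2}\rho' t_{a_2}^{-1}$ and $g_2=(t_{a_3}t_{b_3}t_{c_2})\rho'(t_{a_3}t_{b_3}t_{c_2})^{-1}$ for the generators of $G_4$. The first step is to form the ratios $g_1g_0^{-1}$ and $g_2g_0^{-1}$: using $t_{f(c)}=ft_cf^{-1}$ and the fact that $\rho'$ acts on every simple closed curve disjoint from $a_1,b_1,c_1$ exactly as the rotation $r$, these collapse to $g_1g_0^{-1}=t_{a_2}t_{\rho'(a_2)}^{-1}=t_{a_2}t_{a_3}^{-1}$ and $g_2g_0^{-1}=(t_{a_3}t_{b_3}t_{c_2})t_{\rho'(c_2)}^{-1}t_{\rho'(b_3)}^{-1}t_{\rho'(a_3)}^{-1}=t_{a_3}t_{b_3}t_{c_2}t_{c_3}^{-1}t_{b_4}^{-1}t_{a_4}^{-1}=:W_3$. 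So $G_4$ contains $\rho'$, the ``difference'' $t_{a_2}t_{a_3}^{-1}$, and the six-twist word $W_3$.

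Next I would spread these around $\Sigma_g$ using $\rho'\in G_4$: since $\rho'$ permutes the curves $a_j,b_j,c_j$ (for indices away from the first handle) cyclically via $r$, conjugating $t_{a_2}t_{a_3}^{-1}$ by the powers of $\rho'$ gives $t_{a_j}t_{a_{j+1}}^{-1}\in G_4$ for all $j$ (indices mod $g$; the wrap-around is clean because $c_1$ is disjoint from $a_1$ and $a_g$), and conjugating $W_3$ gives the whole family $W_j:=t_{a_j}t_{b_j}t_{c_{j-1}}t_{c_j}^{-1}t_{b_{j+1}}^{-1}t_{a_{j+1}}^{-1}\in G_4$. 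By the computation in Lemma~\ref{ex:3} one also has $(\rho')^{gn}=t_{c_1}^{n}t_{c_2}^{n}\cdots t_{c_g}^{n}\in G_4$ for every $n$.

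The core of the proof is to distill individual Dehn twists out of these data. Consecutive $W$'s telescope, $W_jW_{j+1}\cdots W_{j+k-1}=t_{a_j}t_{b_j}t_{c_{j-1}}t_{c_{j+k-1}}^{-1}t_{b_{j+k}}^{-1}t_{a_{j+k}}^{-1}$; multiplying these against the differences $t_{a_j}t_{a_{j+1}}^{-1}$ and pushing disjoint twists past one another should strip off the $a$-twists and then the $b$-twists, leaving products $t_{c_i}t_{c_{i'}}^{-1}\in G_4$. Combining these with the $(\rho')^{gn}$ still leaves only a ``$\bmod g$'' ambiguity, and the remaining step -- pinning down a single $t_{c_i}$ -- I would handle by inserting one lantern relation (together with the braid relations) from Section~\ref{Pre} among an appropriate four of the curves $a_i,b_i,c_i$. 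Once one, hence (conjugating by $\rho'$) all, of $t_{c_1},\dots,t_{c_g}$ lie in $G_4$, feeding them back into the $W_i$ and using the braid relations should recover every $t_{a_i}$ and $t_{b_i}$. The hypothesis $g\geq 3$ is exactly what makes the curves $c_2,a_3,b_3$ used in the conjugators available.

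I expect the main obstacle to be, first, the intersection bookkeeping in Figure~\ref{rotation} needed to be certain the ratios and telescoping products really do collapse to short words, and second -- the genuinely nonroutine point -- identifying the precise combination and the precise lantern relation that isolate a single $t_{c_i}$ rather than only $t_{c_i}^{g}$ or a difference of $c$-twists; once that is in hand, the rest is the same kind of manipulation already carried out in Theorems~\ref{thm:5} and~\ref{thm:6}.
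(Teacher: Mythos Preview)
Your opening computations are correct and match the paper: the ratios $g_1g_0^{-1}=t_{a_2}t_{a_3}^{-1}$ and $g_2g_0^{-1}=W_3$ are exactly the analogues of \eqref{ele:1} and \eqref{ele:111} in the proof of Theorem~\ref{thm:10}, and spreading $t_{a_j}t_{a_{j+1}}^{-1}$ around by $\rho'$-conjugation is also what the paper does. But your middle phase diverges and has a genuine gap. The telescoping of the $W_j$ is fine, yet ``stripping off the $a$-twists and then the $b$-twists'' is where things stall: in $W_j=t_{a_j}t_{b_j}t_{c_{j-1}}t_{c_j}^{-1}t_{b_{j+1}}^{-1}t_{a_{j+1}}^{-1}$ the twist $t_{a_j}$ does \emph{not} commute with $t_{b_j}$, so left-multiplying by a difference $t_{a_l}t_{a_j}^{-1}$ does not leave a clean word in the $b$'s and $c$'s---you only get conjugates like $t_{t_{a_j}(b_j)}$. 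Nothing in your outline ever produces a $b$-difference $t_{b_i}t_{b_{i'}}^{-1}$, without which you cannot isolate $c$-differences either. And even granting all $t_{c_i}t_{c_{i'}}^{-1}$, combining with $(\rho')^g=t_{c_1}\cdots t_{c_g}$ (the $c_i$ are pairwise disjoint) yields only $t_{c_1}^g$; the unspecified lantern you defer to would then have to carry essentially the whole argument, and you have not said which curves it would involve or how its ingredients get into $G_4$.

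The paper bypasses telescoping and $(\rho')^g$ entirely. Its key move is to use $W_3$ as a mapping class acting on curves rather than as a word to be simplified: since $W_3(a_2,a_3)=(a_2,b_3)$, conjugating $t_{a_2}t_{a_3}^{-1}$ by $W_3$ immediately gives the mixed difference $t_{a_2}t_{b_3}^{-1}$; similarly $W_3^{-1}(a_1,b_4)=(a_1,c_3)$ yields $t_{a_1}t_{c_3}^{-1}$, and so on. Combined with the observation that $\rho'=rt_{c_1}$ is precisely the auxiliary element $\sigma_1$ of the proof of Theorem~\ref{thm:10}, this assembles the seven specific differences in the hypothesis of Lemma~\ref{lem:10} for $k=1$, and the concrete lantern there ($t_{a_1}t_{c_1}t_{c_2}t_{a_3}=t_{a_2}t_{d_1}t_{d_2}$, with $d_1,d_2$ as in Figure~\ref{curves1}) extracts $t_{a_3}$---not a $c$-twist---directly; $t_{a_1},t_{b_1},t_{c_1}$ and $r$ then follow at once. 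For $g=3$ your formula for $W_3$ is also off: $\rho'(a_3,b_3,c_2)=(a_1,b_1,c_3)$, so the six-twist word is the one in \eqref{ele:112}, not the one you wrote.
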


Lemmas~\ref{lem:10}--\ref{lem:8} below are used to show Theorems~\ref{thm:7}--\ref{thm:11}. 
Recall that if a Dehn twist $t_c$ and an element $f$ are in a subgroup $G$ of $\M$, then $t_f(c)$ is also in $G$ by $t_{f(c)} = f t_c f^{-1}$. 
Therefore, if an element $h$ and a product $t_ct_d^{-1}$ of two Dehn twists $t_c$ and $t_d$ are in $G$, then $t_{h(c)}t_{h(d)}^{-1}$ is also in $G$. 
Let us consider the simple closed curves in Figure~\ref{rotation}. 
Note that $a_i$ (resp. $b_i$) and $b_i$ (resp. $c_i$) intersects transversely at exactly one point for $i=1,2,\ldots,g$, and that $b_i$ (resp. $b_1$) and $c_{i-1}$ (resp. $c_g$) intersects transversely at exactly one point for $i=2,3,\ldots,g$. 
All other pairs of curves are disjoint. 
We will repeatedly use these facts in the proofs of Lemmas~\ref{lem:10}--\ref{lem:8} and Theorems~\ref{thm:7}--\ref{thm:11}.  

\begin{lem}\label{lem:10}
Let $g\geq 3$, and let $G'$ be a subgroup of $\M$ which contains
\begin{align*}
t_{a_k} t_{a_{k+1}}^{-1}, \ t_{a_k} t_{a_{k+2}}^{-1}, \ t_{a_k} t_{b_{k+1}}^{-1}, \ t_{a_k} t_{c_k}^{-1}, \ t_{a_{k+1}} t_{c_{k+1}}^{-1}, \ t_{a_{k}} t_{c_{k+1}}^{-1}, \ t_{a_{k}} t_{b_{k+2}}^{-1}
\end{align*}
for some integer $k$ such that $1\leq k \leq g-2$. 
Then, the element $t_{a_{k+2}}$ is in $G'$.
\end{lem}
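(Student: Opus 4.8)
The plan is to reduce the statement to a single lantern relation, after first using the given generators to ``transfer'' everything so that it becomes a difference involving $t_{a_{k+2}}$.

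First I would write $P_2=t_{a_k}t_{a_{k+2}}^{-1}$ for the second listed generator. Each generator of the form $t_{a_k}t_w^{-1}$ (with $w$ one of $a_{k+1},b_{k+1},c_k,c_{k+1},b_{k+2}$) then yields, after multiplication by $P_2^{-1}=t_{a_{k+2}}t_{a_k}^{-1}$ and cancelling the $t_{a_k}^{\pm1}$,
\[
P_2^{-1}\,(t_{a_k}t_w^{-1})=t_{a_{k+2}}t_{a_k}^{-1}t_{a_k}t_w^{-1}=t_{a_{k+2}}t_w^{-1}\in G',
\]
and combining the resulting $t_{a_{k+2}}t_{a_{k+1}}^{-1}$ with the generator $t_{a_{k+1}}t_{c_{k+1}}^{-1}$ likewise. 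So $G'$ contains the six ``atoms'' $t_{a_{k+2}}t_w^{-1}$ for $w\in\{a_k,a_{k+1},b_{k+1},c_k,c_{k+1},b_{k+2}\}$; recall also, from the intersection data collected before the statement of this lemma, that $a_{k+2}$ is disjoint from all of these except $b_{k+2}$ and that $a_k$ is disjoint from all six, which makes products of these atoms easy to simplify.

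Next I would invoke the lantern relation on a suitable four-holed sphere $P\subset\Sigma_g$ positioned around the handles $k,k+1,k+2$, arranged so that $t_{a_{k+2}}$ appears on one side of the relation and twists about curves from the above list (or their small twisted perturbations) on the other. Writing the lantern relation as $t_{\partial_1}t_{\partial_2}t_{\partial_3}t_{\partial_4}=t_xt_yt_z$ and using that the four boundary twists commute with all the other curves of $P$, one can regroup so that $t_{a_{k+2}}$ --- or at worst a fixed power $t_{a_{k+2}}^{m}$ --- is expressed as a product of the atoms and their inverses; since every atom lies in $G'$, so does $t_{a_{k+2}}^{m}$. If only a power $m>1$ were obtained, I would run the same scheme with a second, similar relation producing a coprime power and conclude $t_{a_{k+2}}\in G'$.

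The transfer step is routine: repeated cancellation together with the disjointness relations recorded in Section~\ref{Pre}. The real work is the middle step --- identifying the correct four-holed sphere and checking that its boundary and interior curves are isotopic to curves built from the seven hypotheses. Note that any two simple closed curves lying in a planar subsurface meet an even number of times, so the three mutually intersecting interior curves of $P$ cannot all be among the $a_i,b_i,c_i$ and some of them must be twisted images of named curves; this is presumably why a seemingly redundant generator such as $t_{a_{k+1}}t_{c_{k+1}}^{-1}$ is included. The other delicate point is the non-commutative bookkeeping needed to extract $t_{a_{k+2}}$ itself, rather than merely a word containing it, from the lantern relation.
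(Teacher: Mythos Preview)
Your high-level strategy is right --- the paper does use a single lantern relation --- but your write-up stops exactly where the content is. You say yourself that ``the real work is the middle step'' of identifying the four-holed sphere and checking that its interior curves are in reach of the hypotheses, and then you do not do that step. As written, the proposal is a plan, not a proof: nothing prevents the lantern you eventually choose from having interior curves whose twist-differences are \emph{not} expressible from the seven given elements, and you give no construction showing otherwise.

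Concretely, the paper takes the four-holed sphere with boundary curves $a_k,c_k,c_{k+1},a_{k+2}$ and interior curves $a_{k+1},d_1,d_2$ (the $d_i$ are drawn in Figure~\ref{curves1}); the lantern reads $t_{a_k}t_{c_k}t_{c_{k+1}}t_{a_{k+2}}=t_{a_{k+1}}t_{d_1}t_{d_2}$, which immediately rearranges to $t_{a_{k+2}}=(t_{a_{k+1}}t_{c_{k+1}}^{-1})(t_{d_1}t_{a_k}^{-1})(t_{d_2}t_{c_k}^{-1})$. The substantive point you are missing is how to get $t_{d_1}t_{a_k}^{-1}$ and $t_{d_2}t_{a_k}^{-1}$ inside $G'$: the paper builds explicit words $\phi_1,\phi_2\in G'$ out of the given atoms (e.g.\ $\phi_1=t_{b_{k+1}}t_{a_k}^{-1}\cdot t_{c_k}t_{a_k}^{-1}\cdot t_{a_k}t_{a_{k+1}}^{-1}\cdot t_{c_{k+1}}t_{a_k}^{-1}$) and checks that $\phi_1(b_{k+1},a_k)=(d_1,a_k)$ and $\phi_2(d_1,a_k)=(d_2,a_k)$, so conjugating $t_{b_{k+1}}t_{a_k}^{-1}$ and $t_{d_1}t_{a_k}^{-1}$ by these words does the job. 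Note in particular that this explains the role of $t_{a_k}t_{b_{k+2}}^{-1}$ (it enters $\phi_2$), and that your contingency about possibly extracting only a power $t_{a_{k+2}}^m$ never arises: the boundary twists in a lantern each appear to the first power, so one application yields $t_{a_{k+2}}$ itself.
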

\begin{proof}
Let $d_1$ and $d_2$ be the simple closed curves in $\Sg$ as in Figure~\ref{curves1}. 
We set $\phi_1 = t_{b_{k+1}} t_{a_k}^{-1} \cdot  t_{c_k} t_{a_k}^{-1} \cdot t_{a_k} t_{a_{k+1}}^{-1} \cdot t_{c_{k+1}} t_{a_{k}}^{-1}$ and $\phi_2 = t_{b_{k+2}}t_{a_k}^{-1} \cdot t_{c_{k+1}} t_{a_k}^{-1} \cdot t_{a_{k+2}} t_{a_k}^{-1} \cdot t_{b_{k+2}} t_{a_k}^{-1}$. 
Then, it is easy to check that $\phi_1 (b_{k+1}, a_k) = (d_1, a_k)$ and $\phi_2 (d_1, a_k) = (d_2, a_k)$. 
Therefore, $t_{d_1} t_{a_k}^{-1} = (\phi_1 t_{a_k} t_{b_{k+1}}^{-1} \phi_1^{-1})^{-1}$ and $t_{d_2} t_{a_k}^{-1} =\phi_2 \cdot t_{d_1} t_{a_k}^{-1} \cdot \phi_2^{-1}$ are in $G'$ since $\phi_1, \phi_2$ and $t_{a_k} t_{b_{k+1}}^{-1}$ are in $G'$.  
Moreover, $t_{d_2} t_{c_k}^{-1} = t_{d_2} t_{a_k}^{-1} \cdot t_{a_k} t_{c_k}^{-1}$ is in $G'$ since $t_{a_k} t_{c_k}^{-1}$ is in $G'$. 
Therefore, by the lantern relation $t_{a_k} t_{c_k} t_{c_{k+1}} t_{a_{k+2}} = t_{a_{k+1}} t_{d_1} t_{d_2}$, we obtain that $t_{a_{k+2}} = t_{a_{k+1}} t_{c_{k+1}}^{-1} \cdot t_{d_1} t_{a_k}^{-1} \cdot t_{d_2} t_{c_k}^{-1}$ is in $G'$. 
\end{proof}
\begin{figure}[hbt]
  \centering
       \includegraphics[scale=.20]{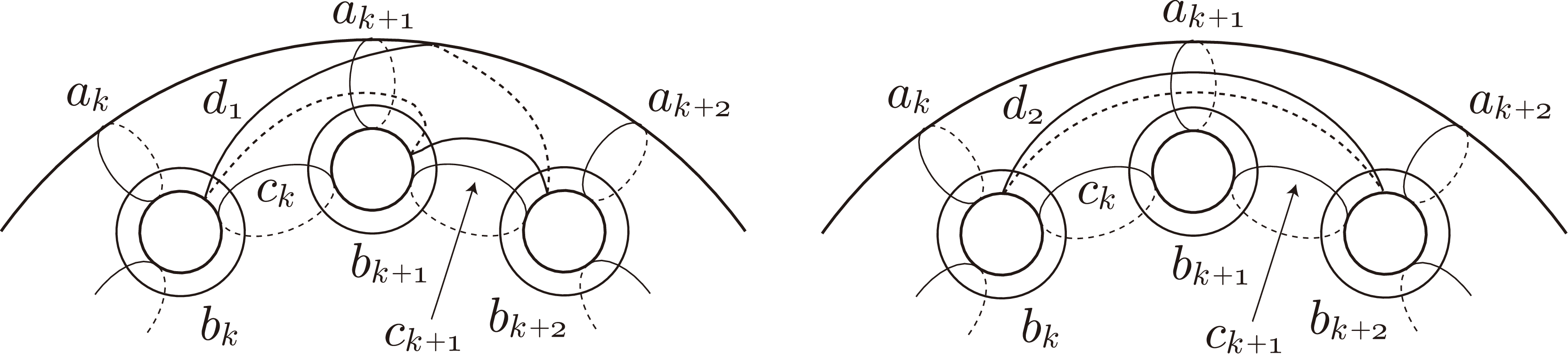}
       \caption{The simple closed curve $d_1$ and $d_2$ on $\Sigma_g$.}
       \label{curves1}
  \end{figure}

\begin{lem}\label{lem:7}
Let $g\geq k+2$, where $k$ is a positive integer, and let $f$ be an element in $\M$ satisfying $f (a_k, b_k, c_k) = (a_{k+1}, b_{k+1}, c_{k+1})$ and $f(a_{k+1}, b_{k+1}) = (a_{k+2}, b_{k+2})$. 
We denote by $G'$ the subgroup of $\M$ generated by $f$, $t_{a_k} t_{a_{k+1}}^{-1}$, $t_{b_k} t_{b_{k+1}}^{-1}$ and $t_{c_k} t_{c_{k+1}}^{-1}$. 
Then, the elements $t_{a_k}, t_{b_k}, t_{c_k}$ are in $G'$.
\end{lem}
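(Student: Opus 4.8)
The plan is to exploit the fact that $f$ acts as a shift on the triple $(a_k,b_k,c_k)$ and (partially) on the next triple, together with the three ``difference'' elements $t_{a_k}t_{a_{k+1}}^{-1}$, $t_{b_k}t_{b_{k+1}}^{-1}$, $t_{c_k}t_{c_{k+1}}^{-1}$, to produce enough difference elements involving the curves $a_{k+2},b_{k+2},c_{k+2}$ to apply Lemma~\ref{lem:10}, and then to bootstrap back down. First I would conjugate each of the three given difference elements by $f$: since $t_{f(c)}t_{f(d)}^{-1}=f(t_ct_d^{-1})f^{-1}$, conjugating $t_{a_k}t_{a_{k+1}}^{-1}$ by $f$ gives $t_{a_{k+1}}t_{a_{k+2}}^{-1}\in G'$, and similarly $t_{b_{k+1}}t_{b_{k+2}}^{-1}\in G'$; conjugating $t_{c_k}t_{c_{k+1}}^{-1}$ by $f$ gives $t_{c_{k+1}}t_{f(c_{k+1})}^{-1}\in G'$, but here I must be careful because the hypothesis only says $f(a_{k+1},b_{k+1})=(a_{k+2},b_{k+2})$, not that $f(c_{k+1})=c_{k+2}$. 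So part of the work is to first recover $t_{c_{k+1}}t_{c_{k+2}}^{-1}$ (or enough twists about $c_{k+1},c_{k+2}$) from the lantern/braid relations among $a_{k+1},b_{k+1},c_{k+1},a_{k+2},b_{k+2}$ and the difference elements already in $G'$.

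Next I would assemble, by taking products of consecutive difference elements (e.g.\ $t_{a_k}t_{a_{k+2}}^{-1}=(t_{a_k}t_{a_{k+1}}^{-1})(t_{a_{k+1}}t_{a_{k+2}}^{-1})$) and by using the braid relations $t_{\alpha_{i+1}}^{-1}(\alpha_i)=t_{\alpha_i}(\alpha_{i+1})$ (applied here to the adjacent pairs among the $a$'s, $b$'s, $c$'s in Figure~\ref{rotation}) to convert, say, $t_{a_k}t_{b_{k+1}}^{-1}$-type elements from the ones in hand, exactly the seven difference elements
\[
t_{a_k}t_{a_{k+1}}^{-1},\ t_{a_k}t_{a_{k+2}}^{-1},\ t_{a_k}t_{b_{k+1}}^{-1},\ t_{a_k}t_{c_k}^{-1},\ t_{a_{k+1}}t_{c_{k+1}}^{-1},\ t_{a_k}t_{c_{k+1}}^{-1},\ t_{a_k}t_{b_{k+2}}^{-1}
\]
required by Lemma~\ref{lem:10}. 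Applying Lemma~\ref{lem:10} then yields $t_{a_{k+2}}\in G'$. Once a single honest Dehn twist is in $G'$, the difference elements give the rest for free: $t_{a_{k+1}}=t_{a_{k+2}}\cdot(t_{a_{k+1}}t_{a_{k+2}}^{-1})\in G'$, hence $t_{a_k}\in G'$; then $t_{b_{k+1}}$ and $t_{c_{k+1}}$ come from the (possibly reconstructed) $b$- and $c$-difference elements conjugated appropriately, and in turn $t_{b_k}$, $t_{c_k}$ follow. This proves $t_{a_k},t_{b_k},t_{c_k}\in G'$.

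The main obstacle I expect is the bookkeeping needed to produce all seven of the specific difference elements demanded by Lemma~\ref{lem:10} purely from the three given $a$-, $b$-, $c$-difference elements and the shift $f$ — in particular the mixed ones such as $t_{a_k}t_{b_{k+1}}^{-1}$, $t_{a_k}t_{c_k}^{-1}$, $t_{a_k}t_{c_{k+1}}^{-1}$, which are not directly among the hypotheses and must be built using the intersection pattern of the curves (braid relations) plus conjugation by elements already shown to lie in $G'$. A secondary subtlety is the asymmetry in the hypothesis on $f$ (it shifts the full triple at level $k$ but only the $a$- and $b$-curves at level $k+1$), which forces a small detour to get $c_{k+2}$ under control before the lantern relation underlying Lemma~\ref{lem:10} can be invoked. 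Neither difficulty is conceptual; both are careful applications of the four standard relations listed in Section~\ref{Pre}, but the order in which the intermediate elements are generated needs to be chosen so that every conjugating element has already been placed in $G'$.
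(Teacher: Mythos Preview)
Your plan is essentially the paper's own proof: conjugate the three given difference elements by $f$, manufacture the seven specific differences required by Lemma~\ref{lem:10} (via conjugation by elements already in $G'$ and products of consecutive differences), apply Lemma~\ref{lem:10} to obtain $t_{a_{k+2}}$, and then cascade back using $f$ and the differences to get $t_{a_k},t_{b_k},t_{c_k}$.

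One correction: your worry about $c_{k+2}$ is misplaced. The seven elements in the hypothesis of Lemma~\ref{lem:10} involve only $a_k,a_{k+1},a_{k+2},b_{k+1},b_{k+2},c_k,c_{k+1}$; the curve $c_{k+2}$ never enters, so no ``detour'' is needed and the asymmetry in the hypothesis on $f$ causes no trouble. The actual crux you correctly flag---producing the mixed differences $t_{a_k}t_{b_{k+1}}^{-1}$ and $t_{a_k}t_{c_k}^{-1}$---is handled in the paper by the two short computations $t_{a_k}t_{a_{k+1}}^{-1}\cdot t_{b_k}t_{b_{k+1}}^{-1}(a_{k+1},a_{k+2})=(b_{k+1},a_{k+2})$ and $t_{b_k}t_{a_{k+2}}^{-1}\cdot t_{c_k}t_{c_{k+1}}^{-1}(b_k,a_{k+2})=(c_k,a_{k+2})$, after which everything else is products and $f$-conjugates as you describe.
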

\begin{proof}
Since $f, t_{a_k} t_{a_{k+1}}^{-1}$ and $t_{b_k} t_{b_{k+1}}^{-1}$ are in $G'$, we see that $t_{a_{k+1}} t_{a_{k+2}}^{-1}$ and $t_{b_{k+1}} t_{b_{k+2}}^{-1}$ are in $G'$. 
Note that it is easy to check that $t_{a_{k}} t_{a_{k+1}}^{-1} \cdot  t_{b_{k}} t_{b_{k+1}}^{-1} (a_{k+1},a_{k+2}) = (b_{k+1},a_{k+2})$. 
Since $t_{a_k} t_{a_{k+1}}^{-1}, t_{b_k} t_{b_{k+1}}^{-1}$ and $t_{a_{k+1}} t_{a_{k+2}}^{-1}$ are contained in $G'$, we see that $t_{b_{k+1}} t_{a_{k+2}}^{-1}$ and $t_{b_k} t_{a_{k+2}}^{-1} = t_{b_k} t_{b_{k+1}}^{-1} \cdot t_{b_{k+1}} t_{a_{k+2}}^{-1}$ are also contained in $G'$. 
Similarly, since $t_{b_k} t_{a_{k+2}}^{-1}$ and $t_{c_k} t_{c_{k+1}}^{-1}$ are in $G'$ and it is easily seen that $t_{b_k} t_{a_{k+2}}^{-1} \cdot t_{c_k} t_{c_{k+1}}^{-1} (b_k, a_{k+2}) = (c_k, a_{k+2})$, we obtain that $t_{c_{k}} t_{a_{k+2}}^{-1}$ is in $G'$. 
By the above argument, we showed that the following elements are in $G'$:
\begin{align*}
t_{a_k} t_{a_{k+1}}^{-1}, \ t_{a_{k+1}} t_{a_{k+2}}^{-1}, \ t_{b_{k+1}} t_{a_{k+2}}^{-1}, \ t_{c_{k}} t_{a_{k+2}}^{-1}. 
\end{align*}
Using these elements, we see that the elements 
\begin{align*}
t_{a_k} t_{a_{k+2}}^{-1} &= t_{a_k} t_{a_{k+1}}^{-1} \cdot t_{a_{k+1}} t_{a_{k+2}}^{-1}, \\
t_{a_k} t_{b_{k+1}}^{-1} &= t_{a_k} t_{a_{k+1}}^{-1} \cdot t_{a_{k+1}} t_{a_{k+2}}^{-1} \cdot (t_{b_{k+1}} t_{a_{k+2}}^{-1})^{-1}, \\
t_{a_k} t_{c_k}^{-1} &= t_{a_k} t_{a_{k+1}}^{-1} \cdot t_{a_{k+1}} t_{a_{k+2}}^{-1} \cdot (t_{c_k} t_{a_{k+2}}^{-1})^{-1}, \\
t_{a_{k+1}} t_{c_{k+1}}^{-1} &= t_{f(a_k)} t_{f(c_k)}^{-1} = f t_{a_k} t_{c_k}^{-1} f^{-1}, \\
t_{a_{k}} t_{c_{k+1}}^{-1} &= t_{a_k} t_{a_{k+1}}^{-1} \cdot t_{a_{k+1}} t_{c_{k+1}}^{-1}, \\ 
t_{a_{k+1}} t_{b_{k+2}}^{-1} &= t_{f(a_k)} t_{f(b_{k+1})}^{-1} = f t_{a_k} t_{b_{k+1}} f^{-1}, \ \ \mathrm{and} \\
t_{a_{k}} t_{b_{k+2}}^{-1} &= t_{a_k} t_{a_{k+1}}^{-1} \cdot t_{a_{k+1}} t_{b_{k+2}}^{-1}
\end{align*}
are contained in $G'$. 
As a consequence, we verified that $G'$ satisfies the condition on Lemma ~\ref{lem:10}, that is  
\begin{align*}
t_{a_k} t_{a_{k+1}}^{-1}, \ t_{a_k} t_{a_{k+2}}^{-1}, \ t_{a_k} t_{b_{k+1}}^{-1}, \ t_{a_k} t_{c_k}^{-1}, \ t_{a_{k+1}} t_{c_{k+1}}^{-1}, \ t_{a_{k}} t_{c_{k+1}}^{-1}, \ t_{a_{k}} t_{b_{k+2}}^{-1}, 
\end{align*} 
are elements of $G'$. 
Then we see that $t_{a_{k+2}}$ is in $G'$ by applying Lemma~\ref{lem:10}. 
Moreover,  the element $t_{a_k}$ is contained in $G'$ by the assumption that $f^2(a_k) = a_{k+2}$. 
Therefore, $t_{b_{k+1}}$ and $t_{c_{k+1}}$ are contained in $G'$ since $G'$ contains $t_{a_k} t_{b_{k+1}}^{-1}$ and $t_{a_{k}} t_{c_{k+1}}^{-1}$. 
Similarly, by the assumption that $f^{-1}(b_{k+1}, c_{k+1}) = (b_k,c_k)$, $t_{b_k}$ and $t_{c_k}$ are contained in $G'$ since $t_{b_{k+1}}$, $t_{c_{k+1}}$ are in $G'$, and the lemma follows. 
\end{proof}

\begin{lem}\label{lem:8}
Let $g\geq k+6$, where $k$ is a positive integer, and let $f$ be an element in $\M$ satisfying $f^j (a_k, b_k, c_k) = (a_{k+j}, b_{k+j}, c_{k+j})$ for $j=1,2,3,4$ and $f(c_{k+4}) = c_{k+5}$. 
We denote by $G''$ the subgroup of $\M$ generated by $f$ and $t_{b_k} t_{a_{k+2}}^{-1} t_{c_{k+3}} \cdot t_{c_{k+4}}^{-1} t_{a_{k+3}} t_{b_{k+1}}^{-1}$. 
Then, the elements $t_{a_k} t_{a_{k+1}}^{-1}, t_{b_k} t_{b_{k+1}}^{-1}, t_{c_k} t_{c_{k+1}}^{-1}$ are contained in $G''$. 
\end{lem}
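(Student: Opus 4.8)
The plan is to unwind the second generator of $G''$, reduce the lemma to producing two ``difference of twists'' elements, and obtain those by commutator and telescoping manipulations with $f$.

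Write $v:=t_{b_k} t_{a_{k+2}}^{-1} t_{c_{k+3}} \cdot t_{c_{k+4}}^{-1} t_{a_{k+3}} t_{b_{k+1}}^{-1}$ for the second generator and $u:=t_{b_k} t_{a_{k+2}}^{-1} t_{c_{k+3}}$. Two structural observations drive everything. First, since $f(a_j,b_j,c_j)=(a_{j+1},b_{j+1},c_{j+1})$ for $j=k,k+1,k+2,k+3$ and $f(c_{k+4})=c_{k+5}$, one has $fuf^{-1}=t_{b_{k+1}}t_{a_{k+3}}^{-1}t_{c_{k+4}}$, hence $v=u\,(fuf^{-1})^{-1}=ufu^{-1}f^{-1}=[u,f]$; in particular $vf=ufu^{-1}\in G''$. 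Second, the six curves $b_k,b_{k+1},a_{k+2},a_{k+3},c_{k+3},c_{k+4}$ are pairwise disjoint (each $a_i$ meets only $b_i$, and each $b_i$ meets only $a_i,c_i,c_{i-1}$, and none of those coincidences occur among these indices), so $v$ is a product of commuting Dehn twists and regroups as $v=B\cdot A\cdot C$, where $B:=t_{b_k}t_{b_{k+1}}^{-1}$, $A:=t_{a_{k+3}}t_{a_{k+2}}^{-1}$, $C:=t_{c_{k+3}}t_{c_{k+4}}^{-1}$. Conjugating by $f$ (valid, since $f$ restricts to the index shift on all six of these curves and $f(c_{k+4})=c_{k+5}$) gives the analogous commuting product $v':=fvf^{-1}=B'A'C'$ with $B'=t_{b_{k+1}}t_{b_{k+2}}^{-1}$, $A'=t_{a_{k+4}}t_{a_{k+3}}^{-1}$, $C'=t_{c_{k+4}}t_{c_{k+5}}^{-1}$.

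Now I would reduce the lemma to showing $A\in G''$ and $C\in G''$. Granting this, $B=v\,C^{-1}A^{-1}\in G''$ as well (the three factors commute), and $B$ is exactly the required element $t_{b_k}t_{b_{k+1}}^{-1}$. Moreover, since $a_{k+2},a_{k+3}$ have indices among $k+1,\dots,k+4$ --- where $f^{-1}$ restricts to the inverse index shift --- conjugating $A$ by $f^{-2}$ gives $f^{-2}Af^{2}=t_{a_{k+1}}t_{a_k}^{-1}=(t_{a_k}t_{a_{k+1}}^{-1})^{-1}\in G''$; likewise $c_{k+3},c_{k+4}$ have indices among $k+1,\dots,k+5$, so conjugating $C$ by $f^{-3}$ gives $f^{-3}Cf^{3}=t_{c_k}t_{c_{k+1}}^{-1}\in G''$. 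Thus the whole statement follows once $A$ and $C$ are extracted from $v$, $v'=fvf^{-1}$ and $f$.

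For the extraction one exploits that, among $\{B,A,C,B',A',C'\}$, every pair commutes except $(A,B')$ --- the sole obstruction being $i(a_{k+2},b_{k+2})=1$. Consequently $[v,v']=[BAC,B'A'C']=[A,B']=[t_{a_{k+2}}^{-1},t_{b_{k+2}}^{-1}]=:s$, an element of $G''$ supported in the $(k+2)$-nd handle, and its $f^{\pm1}$-conjugates $[t_{a_j}^{-1},t_{b_j}^{-1}]$ for $j=k,\dots,k+4$ also lie in $G''$. I would then combine $s$ and its conjugates with the products $vv'$ and $v'v$ (which differ exactly by $s$ and package the twists about $a_{k+2}$ and $b_{k+2}$ in complementary ways) and with the $f$-conjugates of $v$, telescoping the products along the index so as to cancel all the $b$-content and all the $a$-content, isolating $C$; a symmetric computation isolates $A$. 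This last step --- the clean separation of the commuting factors $A$ and $C$ out of $v$, $fvf^{-1}$, $f$ and $s=[v,fvf^{-1}]$ --- is the technical heart of the proof; the remainder is routine bookkeeping with disjointness and with the index-shift action of $f^{\pm1}$.
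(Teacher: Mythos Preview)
Your setup is correct and in fact closer to the paper's argument than you realise: the commutator $s=[v,v']=[t_{a_{k+2}}^{-1},t_{b_{k+2}}^{-1}]$ is not an opaque handle-supported element but, by the braid relation $t_{a_{k+2}}t_{b_{k+2}}t_{a_{k+2}}^{-1}=t_{b_{k+2}}^{-1}t_{a_{k+2}}t_{b_{k+2}}$, equals $t_{b_{k+2}}t_{a_{k+2}}^{-1}$ on the nose. This is precisely the element the paper extracts first (it phrases the same computation as ``$\Phi_{k+1}\Phi_k$ fixes five of the six curves of $\Phi_{k+1}$ and sends $b_{k+2}$ to $a_{k+2}$'', then takes the quotient), so your first step and the paper's coincide.

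The gap is everything after that. Your plan is to isolate $A=t_{a_{k+3}}t_{a_{k+2}}^{-1}$ and $C=t_{c_{k+3}}t_{c_{k+4}}^{-1}$ by ``telescoping'' the commuting products $v,v',vv',v'v$ together with $s$, but you do not carry this out, and as stated it cannot work: modulo the single non-commuting pair, any word in the $f$-conjugates of $v$ still has matching $a$-, $b$-, and $c$-exponent patterns, so purely algebraic cancellation never separates $A$ from $C$. What is actually needed (and what the paper does) is to feed $s$ back in geometrically: once $t_{a_{k+3}}t_{b_{k+3}}^{-1}\in G''$ one replaces the factor $t_{a_{k+3}}$ in $\Phi_k$ by $t_{b_{k+3}}$, and the resulting element now sends $b_{k+3}\mapsto c_{k+3}$ while fixing $a_{k+3}$, yielding $t_{b_{k+3}}t_{c_{k+3}}^{-1}\in G''$; an analogous swap inside $\Phi_k\Phi_{k+1}$ gives $t_{b_{k+4}}t_{c_{k+3}}^{-1}$. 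These two, together with the $f$-shifts of $t_{a_{k+2}}t_{b_{k+2}}^{-1}$, then combine (now genuinely by telescoping) to produce $t_{a_k}t_{a_{k+1}}^{-1}$, $t_{b_k}t_{b_{k+1}}^{-1}$, $t_{c_k}t_{c_{k+1}}^{-1}$. In short, you have the first key step but misidentify what it gives you, and the ``technical heart'' you defer requires two further curve-swapping conjugations of the same flavour, not a commutator telescoping.
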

\begin{proof}
We set 
\begin{align*}
\Phi_k &= t_{b_k} t_{a_{k+2}}^{-1} t_{c_{k+3}} \cdot t_{c_{k+4}}^{-1} t_{a_{k+3}} t_{b_{k+1}}^{-1}, \ \ \mathrm{and} \\
\Phi_{k+1} &= t_{b_{k+1}} t_{a_{k+3}}^{-1} t_{c_{k+4}} \cdot t_{c_{k+5}}^{-1} t_{a_{k+4}} t_{b_{k+2}}^{-1}. 
\end{align*} 
Note that $\Phi_k$ is in $G''$. Then, $\Phi_{k+1} = f \Phi_k f^{-1}$ is also $G''$ by the assumption on $f$. 
Since it is easy to check that 
\[\Phi_{k+1} \Phi_k  (b_{k+1},  a_{k+3},  c_{k+4},  c_{k+5},  a_{k+4},  b_{k+2})  =  (b_{k+1},  a_{k+3},  c_{k+4},  c_{k+5},  a_{k+4},  a_{k+2}),\]
we see that the element 
\begin{align*}
t_{b_{k+1}} t_{a_{k+3}}^{-1} t_{c_{k+4}} \cdot t_{c_{k+5}}^{-1} t_{a_{k+4}} t_{a_{k+2}}^{-1} = (\Phi_{k+1} \Phi_k) \Phi_{k+1} (\Phi_{k+1} \Phi_k)^{-1} 
\end{align*}
is contained in $G''$. 
Then, we obtain the following:
\begin{align}
t_{a_{k+2}} t_{b_{k+2}}^{-1} = (t_{b_{k+1}} t_{a_{k+3}}^{-1} t_{c_{k+4}} \cdot t_{c_{k+5}}^{-1} t_{a_{k+4}} t_{a_{k+2}}^{-1})^{-1} \Phi_{k+1} \in G''. \label{eq:0}
\end{align}
Moreover, from the assumption on $f$, the following holds: 
\begin{align}
t_{a_{k+3}} t_{b_{k+3}}^{-1} \in G'' \ and \label{eq:1} \\
t_{a_{k+4}} t_{b_{k+4}}^{-1} \in G''. \label{eq:2}
\end{align}
Here, we find that $t_{b_{k+2}}^{-1} t_{a_{k+2}} =  \Phi_{k+1} (t_{b_{k+1}} t_{a_{k+3}}^{-1} t_{c_{k+4}} \cdot t_{c_{k+5}}^{-1} t_{a_{k+4}} t_{a_{k+2}}^{-1})^{-1}$, and hence $t_{b_{k+2}}^{-1} t_{a_{k+2}}$ is in $G''$. 
By similar arguments to \eqref{eq:1} and \eqref{eq:2}, we obtain the following:
\begin{align}
&t_{b_{k+3}}^{-1} t_{a_{k+3}}  \in G'', \ \ \mathrm{and} \label{eq:3} \\
&t_{b_{k+4}}^{-1} t_{a_{k+4}}  \in G''. \label{eq:4}
\end{align}
By \eqref{eq:3}, we see that $\Phi_k t_{a_{k+3}}^{-1} t_{b_{k+3}} = \Phi_k (t_{b_{k+3}}^{-1} t_{a_{k+3}})^{-1}$ is in $G''$. 
Moreover, we have
\begin{align*}
\Phi_k t_{a_{k+3}}^{-1} t_{b_{k+3}} 
&=  t_{b_k} t_{a_{k+2}}^{-1} t_{c_{k+3}} \cdot t_{c_{k+4}}^{-1} t_{a_{k+3}} t_{b_{k+1}}^{-1} \cdot t_{a_{k+3}}^{-1} t_{b_{k+3}} \\
&=  t_{b_k} t_{a_{k+2}}^{-1} t_{c_{k+3}} \cdot t_{c_{k+4}}^{-1} t_{b_{k+3}} t_{b_{k+1}}^{-1}. 
\end{align*}
This gives $(\Phi_k t_{a_{k+3}}^{-1} t_{b_{k+3}})^{-1}(a_{k+3}, b_{k+3}) = (t_{b_{k+3}}^{-1}(a_{k+3}), c_{k+3})$. 
Therefore, since $t_{a_{k+3}} t_{b_{k+3}}^{-1}$ and $(\Phi_k t_{a_{k+3}}^{-1} t_{b_{k+3}})^{-1}$ are in $G''$, the element $t_{t_{b_{k+3}}^{-1}(a_{k+3})} t_{c_{k+3}}^{-1}$ is in $G''$. 
That is, $t_{b_{k+3}}^{-1} t_{a_{k+3}} t_{b_{k+3}} t_{c_{k+3}}^{-1}$ is in $G''$. 
By \eqref{eq:3}, we have the following:
\begin{align}
t_{b_{k+3}} t_{c_{k+3}}^{-1} \in G''. \label{eq:5}
\end{align}
Since $\Phi_k$ and $\Phi_{k+1}$ are contained in $G''$, by \eqref{eq:4} we see that $\Phi_k \Phi_{k+1} t_{a_{k+4}}^{-1} t_{b_{k+4}} = \Phi_k \Phi_{k+1} (t_{b_{k+4}}^{-1} t_{a_{k+4}})^{-1}$ is in $G''$. 
Therefore, we obtain that 
\begin{align*}
\Phi_k \Phi_{k+1} t_{a_{k+4}}^{-1} t_{b_{k+4}} 
&=  t_{b_k} t_{a_{k+2}}^{-1} t_{c_{k+3}} \cdot t_{c_{k+5}}^{-1} t_{a_{k+4}} t_{b_{k+2}}^{-1} \cdot t_{a_{k+4}}^{-1} t_{b_{k+4}} \\
&=  t_{b_k} t_{a_{k+2}}^{-1} t_{c_{k+3}} \cdot t_{c_{k+5}}^{-1} t_{b_{k+4}} t_{b_{k+2}}^{-1}
\end{align*}
is in $G''$. 
This gives $(\Phi_k \Phi_{k+1} t_{a_{k+4}}^{-1} t_{b_{k+4}})^{-1}(a_{k+4}, b_{k+4}) =  (t_{b_{k+4}}^{-1}(a_{k+4}), c_{k+3})$, and hence we see that $t_{t_{b_{k+4}}^{-1}(a_{k+4})} t_{c_{k+3}}^{-1}$ is in $G''$ by \eqref{eq:2}. 
That is, $t_{b_{k+4}}^{-1} t_{a_{k+4}} t_{b_{k+4}} t_{c_{k+3}}^{-1}$ is in $G''$.  
Therefore, by \eqref{eq:4} and the assumption that $f^{-1}(b_{k+4}, c_{k+3}) = (b_{k+3}, c_{k+2})$, the following holds: 
\begin{align}
&t_{b_{k+4}} t_{c_{k+3}}^{-1} \in G'', \ \ \mathrm{and} \label{eq:6} \\
&t_{b_{k+3}} t_{c_{k+2}}^{-1} \in G''. \label{eq:7} 
\end{align}
Here, by \eqref{eq:5} and \eqref{eq:6}, the element $t_{b_{k+3}} t_{b_{k+4}}^{-1} = t_{b_{k+3}} t_{c_{k+3}}^{-1} (t_{b_{k+4}} t_{c_{k+3}}^{-1})^{-1}$ is  in $G''$. 
Therefore, by the assumption that $f^{-l}(b_{k+3}, b_{k+4}) = (b_{k+3-l}, b_{k+4-l})$ for $l=1,2,3$, 
$t_{b_{k}} t_{b_{k+1}}^{-1}$, $t_{b_{k+1}} t_{b_{k+2}}^{-1}$ and $t_{b_{k+2}} t_{b_{k+3}}^{-1}$ are contained in $G''$.
Moreover, 
by \eqref{eq:0}, \eqref{eq:1}, we obtain that $t_{a_{k+2}} t_{a_{k+3}}^{-1} = t_{a_{k+2}} t_{b_{k+2}}^{-1} \cdot t_{b_{k+2}} t_{b_{k+3}}^{-1} \cdot (t_{a_{k+3}} t_{b_{k+3}}^{-1})^{-1}$ is in $G''$. 
By the assumption $f^{-2}(a_{k+2}, a_{k+3}) = (a_k, a_{k+1})$, we obtain that $t_{a_k} t_{a_{k+1}}^{-1} \in G''$. 
By \eqref{eq:7}, \eqref{eq:5}, the element $t_{c_{k+2}} t_{c_{k+3}}^{-1} = (t_{b_{k+3}} t_{c_{k+2}}^{-1})^{-1} t_{b_{k+3}} t_{c_{k+3}}^{-1}$ is in $G''$, and therefore $t_{c_k} t_{c_{k+1}}^{-1}$ is in $G''$ by the assumption that $f^{-2}(c_{k+2}, c_{k+3}) = (c_k, c_{k+1})$. 
Summarizing the above, the elements $t_{a_k} t_{a_{k+1}}^{-1}, t_{b_k} t_{b_{k+1}}^{-1}, t_{c_k}t_{c_{k+1}}$ are contained in $G''$, and the lemma follows. 
\end{proof}

\begin{proof}[Proof of Theorem~\ref{thm:7}]
It is easy to check that 
\begin{align*}
\rho_n(b_3,a_5,c_6) = r t_{c_1} t_{b_1}^{-n} t_{a_1}(b_3,a_5,c_6) = r(b_3,a_5,c_6) = (b_4, a_6, c_7).
\end{align*} 
This gives 
\begin{align*}
(t_{b_3}t_{a_5}^{-1}t_{c_6}) \rho_n (t_{b_3}t_{a_5}^{-1}t_{c_6})^{-1}  &= (t_{b_3}t_{a_5}^{-1}t_{c_6}) \cdot \rho_n (t_{b_3}t_{a_5}^{-1}t_{c_6})^{-1} \rho_n^{-1}\cdot \rho_n \\
&= (t_{b_3}t_{a_5}^{-1}t_{c_6}) \cdot (t_{b_4}t_{a_6}^{-1}t_{c_7})^{-1} \rho_n. 
\end{align*}
and therefore, 
$G_1$ contains the element $(t_{b_3}t_{a_5}^{-1}t_{c_6}) (t_{b_4}t_{a_6}^{-1}t_{c_7})^{-1}$. 
From Lemma~\ref{lem:8}, the elements $t_{a_3}t_{a_4}^{-1}$, $t_{b_3}t_{b_4}^{-1}$ and $t_{c_3}t_{c_4}^{-1}$ are in $G_1$ (by letting $k=3$ and $f=\rho_n$). 
Moreover, from Lemma~\ref{lem:7}, we see that the elements $t_{a_3}, t_{b_3}, t_{c_3}$ are in $G_1$ (by letting $k=3$ and $f=\rho_n$). 
Since it is easy to check that $\rho_n (a_i,b_i) = (a_{i+1}, b_{i+1})$ for $i=3,4,\ldots,g-1$ and $\rho_n(a_g, b_g) = (a_1, b_1)$, we see that $t_{a_1}$ and $t_{b_1}$ are contained in $G_1$. 
Hence, the element $r t_{c_1} = r t_{c_1} t_{b_1}^{-n} t_{a_1} \cdot t_{a_1}^{-1} t_{b_1}^n = \rho_n t_{a_1}^{-1} t_{b_1}^n$ is in $G_1$. 
By $(r t_{c_1})^{-2}(c_3) = c_1$, we obtain that $t_{c_1} = t_{(r t_{c_1})^{-2}(c_3)} = (r t_{c_1})^{-2} t_{c_3} (r t_{c_1})^{2}$ is in $G_1$, and therefore, $r = r t_{c_1} \cdot t_{c_1}^{-1}$ is in $G_1$. 
Since $r^i(a_1,b_1,c_1) = (a_{i+1},b_{i+1},c_{i+1})$ for $i=0,1,2,\ldots,g-1$, the elements $t_{a_{i+1}}$, $t_{b_{i+1}}$ and $t_{c_{i+1}}$ are in $G_1$ 
for $i=0,1,2,\ldots,g-1$. 
Therefore, we see that $G_1 = \M$, which proves the theorem. 
\end{proof}

\begin{proof}[Proof of Theorem~\ref{thm:8}]
We omit the proof since it is almost the same as the proof of Theorem~\ref{thm:7}.  
\end{proof}

\begin{proof}[Proof of Theorem~\ref{thm:10}]
By the definition of $\rho_n$, we obtain 
\begin{align*}
\rho_n(a_2) &= a_3, \ \ \mathrm{and} \\ 
\rho_n(a_3,b_3,c_2) &=
\begin{cases}
(a_4, b_4, c_3) & (g\geq 4), \\
(a_1, b_1, c_3) & (g = 3). 
\end{cases}
\end{align*}
Since we have 
\begin{align*}
t_{a_2} \rho_n t_{a_2}^{-1} &= t_{a_2} (\rho_n t_{a_2}^{-1} \rho_n^{-1}) \rho_n \\
(t_{a_3}t_{b_3}t_{c_2}) \rho_n (t_{a_3}t_{b_3}t_{c_2})^{-1} &= (t_{a_3}t_{b_3}t_{c_2}) \cdot \rho_n (t_{a_3}t_{b_3}t_{c_2})^{-1} \rho_n^{-1} \cdot \rho_n, 
\end{align*}
the following holds:
\begin{align*}
&G_2 \ni t_{a_2} \rho_n t_{a_2}^{-1} = t_{a_2} t_{a_3}^{-1} \rho_n, \ \ \mathrm{and} \\
&G_2 \ni (t_{a_3}t_{b_3}t_{c_2}) \rho_n (t_{a_3}t_{b_3}t_{c_2})^{-1} = 
\begin{cases}
(t_{a_3}t_{b_3}t_{c_2}) \left(t_{a_4}t_{b_4}t_{c_3}\right)^{-1} \rho_n & (g\geq 4), \\
(t_{a_3}t_{b_3}t_{c_2}) \left(t_{a_1}t_{b_1}t_{c_3}\right)^{-1} \rho_n & (g=3). 
\end{cases}
\end{align*}
Therefore, we obtain the following: 
\begin{align}
&t_{a_2} t_{a_3}^{-1} \in G_2, \label{ele:1} \\ 
&t_{a_3}t_{b_3}t_{c_2} \cdot t_{c_3}^{-1} t_{b_4}^{-1} t_{a_4}^{-1} \in G_2 \ \mathrm{if} \ g\geq 4, \ \ \mathrm{and} \label{ele:111} \\
&t_{a_3}t_{b_3}t_{c_2} \cdot t_{c_3}^{-1} t_{b_1}^{-1} t_{a_1}^{-1} \in G_2 \ \mathrm{if} \ g=3. \label{ele:112}
\end{align}

Hereafter, we prove that $t_{a_3}$ is in $G_2$ using these elements and Lemma~\ref{lem:10}. 
More precisely, we show that the following elements are contained in $G_2$: 
\begin{align*}
t_{a_1} t_{a_2}^{-1}, \ t_{a_1} t_{a_3}^{-1}, \ t_{a_1} t_{b_2}^{-1}, \ t_{a_1} t_{c_1}^{-1}, \ t_{a_2} t_{c_2}^{-1}, \ t_{a_1} t_{c_2}^{-1}, \ t_{a_1} t_{b_3}^{-1}. 
\end{align*}

Suppose that $g\geq 4$. 
For simplicity of notation, we write $\phi = t_{a_3}t_{b_3}t_{c_2} \cdot t_{c_3}^{-1} t_{b_4}^{-1} t_{a_4}^{-1}$.

We first show that 
$t_{a_1} t_{a_2}^{-1}$, $t_{a_1} t_{a_3}^{-1}$ and $t_{a_1} t_{b_3}^{-1}$ are in $G_2$.  
It is easy to check that $\phi(a_2,a_3) = (a_2,b_3)$, and hence 
$t_{a_2} t_{b_3}^{-1}$ is in $G_2$ by \eqref{ele:1} and \eqref{ele:111}. 
We obtain the following: 
\begin{align}
&t_{a_{2+i}} t_{a_{3+i}}^{-1} = \rho_n^i t_{a_2} t_{a_3}^{-1} \rho_n^{-i} \in G_2 \ \ (0\leq i \leq g-3), \\
&t_{a_g} t_{a_1}^{-1} = \rho_n^{g-2} t_{a_2} t_{a_3}^{-1} \rho_n^{-(g-2)} \in G_2 , \label{A1} \\
&t_{a_g} t_{b_1}^{-1} = \rho_n^{g-2} t_{a_2} t_{b_3}^{-1} \rho_n^{-(g-2)}\in G_2 . \label{A2}
\end{align}
Using these, we see that the element $t_{a_2} t_{a_1}^{-1} = t_{a_2} t_{a_3}^{-1} \cdot t_{a_3} t_{a_4}^{-1} \cdots t_{a_{g-1}} t_{a_g}^{-1} \cdot t_{a_g} t_{a_1}^{-1}$ is in $G_2$. 
Therefore, the following holds: 
\begin{align}
&t_{a_1} t_{a_2}^{-1} = t_{a_2} t_{a_1}^{-1} \in G_2, \label{A3}\\
&t_{a_1} t_{a_3}^{-1} = t_{a_1} t_{a_2}^{-1} \cdot t_{a_2} t_{a_3}^{-1} \in G_2, \label{A4} \\
&t_{a_1}^{-1} t_{b_1} = (t_{a_g} t_{a_1}^{-1}) \cdot (t_{a_g} t_{b_1}^{-1})^{-1} \in G_2, \ \ \mathrm{and} \label{A5} \\
&t_{a_1} t_{b_1}^{-1} = (t_{a_g} t_{a_1}^{-1})^{-1} (t_{a_g} t_{b_1}^{-1}) \in G_2. \notag
\end{align}
Moreover, since $\rho_n^{-g+2}(a_1,b_1) = (a_3,b_3)$, we have the following:
\begin{align}
&t_{a_3} t_{b_3}^{-1} \in G_2, \ \ \mathrm{and} \label{ele:2} \\
&t_{a_1} t_{b_3}^{-1} = t_{a_1} t_{a_3}^{-1} \cdot t_{a_3} t_{b_3}^{-1} \in G_2. \label{A6}
\end{align}
From \eqref{A3}, \eqref{A4} and \eqref{A6}, the elements $t_{a_1} t_{a_2}^{-1}$, $t_{a_1} t_{a_3}^{-1}$ and $t_{a_1} t_{b_3}^{-1}$ are in $G_2$.

Next, we show that $t_{a_1} t_{b_2}^{-1}$, $t_{a_1} t_{c_1}^{-1}$, $t_{a_2} t_{c_2}^{-1}$ and $t_{a_1} t_{c_2}^{-1}$ are in $G_2$. 
By \eqref{ele:1} and \eqref{ele:2}, we have $t_{a_2} t_{b_3}^{-1} = t_{a_2} t_{a_3}^{-1} \cdot t_{a_3} t_{b_3}^{-1}$ is in $G_2$. 
Therefore, by $\rho_n(a_2,b_3) = (a_3,b_4)$, the element $t_{a_3}t_{b_4}^{-1}$ is contained in $G_2$. 
Combining this and \eqref{A4}, the element $t_{a_1} t_{b_4}^{-1} = t_{a_1} t_{a_3}^{-1} \cdot t_{a_3} t_{b_4}^{-1}$ is in $G_2$. 
Then, since it is easily seen that $\phi^{-1}(a_1,b_4) = (a_1,c_3)$, we see that $t_{a_1} t_{c_3}^{-1}$ is in $G_2$ by \eqref{ele:111}. 
Using this, \eqref{A3} and \eqref{A4}, the elements $t_{a_2} t_{c_3}^{-1} = (t_{a_1} t_{a_2}^{-1})^{-1} t_{a_1} t_{c_3}^{-1}$ and $t_{a_3} t_{c_3}^{-1} = (t_{a_1} t_{a_3}^{-1})^{-1} t_{a_1} t_{c_3}^{-1}$ are contained in $G_2$. 
Here, we set $\sigma_n = \rho_n \cdot t_{a_1}^{-1} t_{b_1} \cdot (t_{a_2}^{-1} t_{a_1} \cdot t_{a_1}^{-1} t_{b_1})^{n-1}$, which is in $G_2$ by \eqref{A3} and \eqref{A5}. 
Then, by the definition of $\rho_n$, we have $\sigma_n = r t_{c_1} t_{a_2}^{-n+1}$. 
This gives $\sigma_n^{-2}(a_3,c_3) = (a_1,c_1)$, $\sigma_n^{-1}(a_3,c_3) = (a_2,c_2)$ and $\sigma_n^{-2}(a_2,c_3) = (a_1,c_2)$. 
Therefore, the following holds: 
\begin{align}
&t_{a_1} t_{c_1}^{-1} \in G_2, \label{A7} \\
&t_{a_2} t_{c_2}^{-1} \in G_2, \ \ \mathrm{and} \label{A8} \\
&t_{a_1} t_{c_2}^{-1} \in G_2. \label{A9}
\end{align}
Moreover, since it is immediate that $\sigma_n(a_1,b_1) = (a_2,t_{c_2}(b_2))$, the element $t_{a_2}^{-1} t_{t_{c_2}(b_2)}$ is in $G_2$ by \eqref{A5}. 
By \eqref{A8} and $t_{a_2}^{-1} t_{t_{c_2}(b_2)} = t_{a_2}^{-1} t_{c_2} t_{b_2} t_{c_2}^{-1}$, we see that $t_{c_2} t_{b_2}^{-1}$ is in $G_2$. 
Therefore, the element $t_{a_1} t_{b_2}^{-1} = t_{a_1} t_{c_2}^{-1} \cdot t_{c_2} t_{b_2}^{-1}$ is contained in $G_2$ by \eqref{A9}. 
Summarizing the above, the elements $t_{a_1} t_{b_2}^{-1}$, $t_{a_1} t_{c_1}^{-1}$, $t_{a_2} t_{c_2}^{-1}$ and $t_{a_1} t_{c_2}^{-1}$ are in $G_2$.

By Lemma~\ref{lem:10} we see that $t_{a_3}$ is in $G_2$. 
Moreover, by \eqref{A4}, \eqref{A5} and \eqref{A7}, the Dehn twists $t_{a_1} = t_{a_1}t_{a_3}^{-1} \cdot t_{a_3}$, $t_{b_1} = t_{a_1} \cdot t_{a_1}^{-1} t_{b_1}$ and $t_{c_1} = (t_{a_1}t_{c_1}^{-1})^{-1} \cdot t_{a_1}^{-1}$ are contained in $G_2$, and hence the rotation $r = \rho_n t_{a_1}^{-1} t_{b_1}^n t_{c_1}^{-1}$ is in $G_2$. 
Since $r^i(a_1,b_1,c_1) = (a_{i+1},b_{i+1},c_{i+1})$ for $i=0,1,2,\ldots,g-1$, we obtain that the Dehn twists $t_{a_{i+1}}$, $t_{b_{i+1}}$ and $t_{c_{i+1}}$ are in $G_2$ for $i=0,1,2,\ldots,g-1$. 
Therefore, we see that $G_2 = \M$ for $g\geq 4$.

Suppose that $g=3$. 
Since it is easy to check that $\rho_n(a_2,a_3) = (a_3,a_1)$, by \eqref{ele:1} the following holds: 
\begin{align}
&t_{a_3} t_{a_1}^{-1} = t_{a_1}^{-1} t_{a_3} \in G_2, \ \ \mathrm{and} \label{B1} \\
&t_{a_1} t_{a_2}^{-1} =  (t_{a_3} t_{a_1}^{-1})^{-1} \cdot (t_{a_2} t_{a_3}^{-1})^{-1} \in G_2. \label{B5}
\end{align}
Here, for simplicity of notation, we also write $\phi = t_{a_3}t_{b_3}t_{c_2} \cdot t_{c_3}^{-1} t_{b_1}^{-1} t_{a_1}^{-1}$. 
Then, by \eqref{ele:1}, \eqref{ele:112} and $\phi(a_2,a_3) = (a_2,b_3)$, we see that $t_{a_2} t_{b_3}^{-1}$ is in $G_2$. 
From this, \eqref{B5} and $\rho_n(a_2,b_3) = (a_3,b_1)$, we have the following: 
\begin{align}
&t_{a_1} t_{b_3}^{-1} = t_{a_1} t_{a_2}^{-1} \cdot t_{a_2} t_{b_3}^{-1} \in G_2, \ \ \mathrm{and} \label{B6} \\
&t_{a_3} t_{b_1}^{-1} \in G_2. \label{B2}
\end{align}
Moreover, by \eqref{ele:1}, \eqref{B1} and \eqref{B2}, we obtain the following: 
\begin{align}
&t_{a_1}^{-1} t_{b_1} = t_{a_3} t_{a_1}^{-1} \cdot (t_{a_3} t_{b_1}^{-1})^{-1} \in G_2, \ \ \mathrm{and} \label{B7} \\
&t_{a_2} t_{b_1}^{-1} = t_{a_2} t_{a_3}^{-1} \cdot t_{a_3} t_{b_1}^{-1} \in G_2. \label{B8}
\end{align}
Here, when we set $\sigma_n=\rho_n \cdot t_{a_1}^{-1}t_{b_1} (t_{a_2}t_{b_1}^{-1})^{-n+1}$, which is in $G_2$ by \eqref{B7} and \eqref{B8}, we see that $\sigma_n = r t_{c_1} t_{a_2}^{-n+1}$ by the definition of $\rho_n$. 
Since it is easy to check that $\phi^{-1}(a_2,b_1) = (a_2,c_3)$, the element $t_{a_2} t_{c_3}^{-1}$ is in $G_2$ by \eqref{B8} and \eqref{ele:112}. 
Moreover, by $\sigma_n^2(a_2,c_3) = \sigma_n(a_3,c_1) = (a_1,c_2)$, the follwing holds: 
\begin{align}
&t_{a_3} t_{c_1}^{-1} \in G_2, \ \ \mathrm{and} \notag \\ 
&t_{a_1} t_{c_2}^{-1} \in G_2. \label{B9}
\end{align}
From these, \eqref{B1} and \eqref{B5}, we have
\begin{align}
&t_{a_1} t_{c_1}^{-1} = t_{a_1} t_{a_3}^{-1} \cdot t_{a_3} t_{c_1}^{-1} \in G_2, \ \ \mathrm{and} \label{B10} \\
&t_{c_2}^{-1} t_{a_2} = t_{a_2} t_{c_2}^{-1} = (t_{a_1} t_{a_2}^{-1})^{-1} \cdot t_{a_1} t_{c_2}^{-1} \in G_2 \label{B11}. 
\end{align}
By $\sigma_n(a_1,b_1) = (a_2, t_{c_2}(b_2))$ and \eqref{B7}, the element $t_{a_2}^{-1} t_{t_{c_2}(b_2)} = t_{a_2}^{-1} t_{c_2} t_{b_2} t_{c_2}^{-1}$ is in $G_2$. 
Therefore, the element $t_{b_2} t_{c_2}^{-1}$ is in $G_2$ by \eqref{B11}, and the element $t_{a_1} t_{b_2}^{-1} = t_{a_1} t_{c_2}^{-1} \cdot (t_{b_2} t_{c_2}^{-1})^{-1}$ is in $G_2$ by \eqref{B9}. 
From \eqref{B1}--\eqref{B6}, \eqref{B9}--\eqref{B11} and this, the Dehn twist $t_{a_3}$ is in $G_2$ using Lemma~\ref{lem:10}. 
By a similar argument to that in the case of $g\geq 4$, we see that $G_2 = \mathcal{M}_3$. 
\end{proof}

\begin{proof}[Proof of Theorem~\ref{thm:11}]
Suppose that $g\geq 4$. 
By a similar argument to the proof of Theorem~\ref{thm:10}, we see that $t_{a_1} t_{a_2}^{-1}$, $t_{a_1} t_{a_3}^{-1}$ and $t_{a_1} t_{b_3}^{-1}$ are in $G_4$. 
Moreover, the element $\rho' = r t_{c_1}$ is equal to $\sigma_1$ in the proof of Theorem~\ref{thm:10}. 
Therefore, $t_{a_1} t_{b_2}^{-1}$, $t_{a_1} t_{c_1}^{-1}$, $t_{a_2} t_{c_2}^{-1}$ and $t_{a_1} t_{c_2}^{-1}$ are in $G_4$, and hence the elements $t_{a_1}$, $t_{b_1}$, $t_{c_1}$ and $r$ are in $G_4$. 
By a similar argument to the proof of Theorem~\ref{thm:7}, we see that $G_4 = \M$ for $g\geq 4$. 
The same reasoning applies to the case $g=3$. 
This finishes the proof. 
\end{proof}


\begin{thebibliography}{EK99}
\bibitem{BK} R. I. Baykur and M. Korkmaz, \textit{The mapping class group is generated by two commutators}, J. Algebra \textbf{574} (2021), 278--291. 

\bibitem{BH} J. Birman and H. Hilden, \textit{On the mapping class groups of closed surfaces as covering spaces}, in: Ann. Math. Studies \textbf{66}, Princeton Univ. Press 1971. 

\bibitem{BF} T. E. Brendle and B. Farb, \textit{Every mapping class group is generated by 3 torsion elements and by 6 involutions}, J. Algebra \textbf{278} (2004), 187--198.

\bibitem{De} M. Dehn, \textit{Die Gruppe der Abbildungsklassen}, Acta Math, Vol. \textbf{69} (1938), 135--206. 

\bibitem{Du0} X. Du, \textit{Generating the mapping class groups by torsions, J. Knot Theory Ramifications \textbf{26} (2017), no. 7, 1750037, 8 pp.}

\bibitem{FM} B. Farb and D. Margalit, \textit{A Primer on Mapping Class Groups}, Princeton Math. Ser., (Princeton University Press, 2012), 623--658.

\bibitem{Hu} S. P. Humphries, \textit{Generators for the mapping class group}, Topology of low-dimensional manifolds. Proc. Second Sussex Conf. Chelwood Gate 1977 Lecture Notes in Math. \textbf{722} (Springer, 1979), 44--47.

\bibitem{Jo1} D. Johnson, \textit{Homeomorphisms of a surface which act trivially on homology}, Proc. Amer. Math. Soc. \textbf{75} (1979), 119--125.

\bibitem{Ka} M. Kassabov, \textit{Generating mapping class groups by involutions}, arXiv:math.GT/0311455 v1 25 Nov 2003.

\bibitem{Ko1} M. Korkmaz, \textit{Generating the surface mapping class group by two elements}, Trans. Amer. Math. Soc. \textbf{357} (2005), 3299--3310.

\bibitem{Ko2} M. Korkmaz, \textit{Mapping class group is generated by three involutions}, Math. Res. Lett. \textbf{27} (2020) Number 4, 1095--1108. 

\bibitem{La} J. Lanier, \textit{Generating mapping class groups with elements of fixed finite order}, Journal of Algebra, \textbf{511} (2018), 455--470. 

\bibitem{LM} J. Lanier and D. Margalit, \textit{Normal generators for mapping class groups are abundant}, Comment. Math. Helv. \textbf{97} (2022), no.1, 1--59. 

\bibitem{Li} W. B. R. Lickorish, \textit{A finite set of generators for the homeotopy group of a 2-manifold}, Proc. Camb. Phils. Soc. \textbf{60} (1964), 769--778. 

\bibitem{Lu} N. Lu, \textit{On the mapping class groups of the closed orientable surfaces}, Topology Proc. \textbf{13} (1988), 293--324.

\bibitem{Luo} F. Luo, \textit{Torsion elements in the mapping class group of a surface}, arXiv:math.GT/0004048 v1 8 Apr 2000.

\bibitem{Mac} C. Maclachlan, \textit{Modulus space is simply-connected}, Proc. Amer. Math. Soc. \textbf{29} (1971), 85--86.

\bibitem{Monden} N. Monden, \textit{On roots of Dehn twists}, Rocky Mountain J. Math. \textbf{44} (2014), no.3, 987--1001. 

\bibitem{Mo3} N. Monden, \textit{Generating the mapping class group by torsion elements of small order}, Math. Proc. Cambridge Philos. Soc. \textbf{154} (2013), no. 1, 41--62.

\bibitem{MP} J. McCarthy and A. Papadopoulos, \textit{Involutions in surface mapping class groups}, Enseign. Math. \textbf{33} (1987), 275--290.

\bibitem{MR} D. McCullough and K. Rajeevsarathy, \textit{Roots of Dehn twists}, Geom. Dedicata \textbf{151} (2011), 397--409. 

\bibitem{MS} D. Margalit and S. Schleimer, \textit{Dehn twists have roots}, Geom. Topol. \textbf{13} (2009), no.3, 1495--1497. 

\bibitem{Pe} R. C. Penner, \textit{A construction of pseudo-Anosov homeomorphisms}, Trans. Amer. Math. Soc. \textbf{310} (1988), no. 1, 179--197.


\bibitem{Sternberg} S. Sternberg, \textit{Dynamical Systems}, Dover (2014)


\bibitem{St} M. Stukow, \textit{Small torsion generating sets for hyperelliptic mapping class groups}, Topology Appl. \textbf{145} (2004), no.1--3, 83--90.

\bibitem{Th} W. P. Thurston, \textit{On the geometry and dynamics of diffeomorphisms of surfaces}, Bull. Amer. Math. Soc. (N.S.) \textbf{19} (1988), no. 2, 417--431.

\bibitem{Wa} B. Wajnryb, \textit{Mapping class group of a surface is generated by two elements}, Topology. \textbf{35} (1996), 377--383.

\bibitem{Yi1} O. Yildiz, \textit{Generating the mapping class group by three involutions}, arXiv:2002.09151. 

\bibitem{Yi2} O. Yildiz, \textit{Generating mapping class group by two torsion elements}, Mediterr. J. Math. \textbf{19} (2022), no. 2, Paper No. 59, 23 pp. 

\bibitem{Yo} K. Yoshihara, \textit{Generating mapping class groups of surfaces by torsion elements}, PhD thesis, Kyushu University, 2018. 
\end{thebibliography}
\end{document}